\DeclareMathAlphabet{\mathpzc}{OT1}{pzc}{m}{rm}
\newenvironment{dedication}
{\vspace{6ex}\begin{quotation}\begin{center}\begin{em}}
			{\par\end{em}\end{center}\end{quotation}}
\newcommand{\tagequation}[3]
{
	\begin{equation} 
		\label{#1}
		#2
		\tag{$#3$}
	\end{equation}
}
\numberwithin{equation}{section}
\newtheorem{proposition}{Proposition}[section]
\newtheorem{theorem}{Theorem}[section]
\newtheorem{lemma}{Lemma}[section]
\newtheorem{remark}{Remark}[section]
\newcommand{\ds}{\displaystyle}
\newcommand*{\myfont}{\fontfamily{lmss}\selectfont}
\newcommand{\LP}{\myfont{}\text{P}\normalfont}
\newcommand{\LQ}{\myfont{}\text{Q}\normalfont}
\newcommand{\LA}{\myfont{}\text{A}\normalfont}
\begin{document}

\title[Clamped plates in positively curved spaces]
{
Lord Rayleigh's Conjecture for Vibrating Clamped Plates in Positively Curved Spaces
}

\vspace{-0.9cm}
\author{Alexandru Krist\'aly}
\address{Department of Economics, Babe\c s-Bolyai University, Cluj-Napoca, Romania   \& 
Institute of Applied Mathematics, \'Obuda
University, 
Budapest, Hungary}
\email{alex.kristaly@econ.ubbcluj.ro; kristaly.alexandru@uni-obuda.hu}

\thanks{Research supported by the UEFISCDI/CNCS grant PN-III-P4-ID-PCE2020-1001.}

\keywords{Lord Rayleigh's conjecture; clamped plate;  positive curvature; dimension-dependence.}

\subjclass[2000]{Primary: 53C21, 35P15,  35J35, 35J40, 49Q20.
}

	\vspace*{-0.5cm}
	
\begin{abstract}
	We affirmatively solve the analogue of Lord Rayleigh's conjecture  on Riemannian manifolds with positive Ricci curvature for \textit{any}  clamped plates in 2 and 3 dimensions, and for sufficiently \textit{large} clamped plates in dimensions beyond 3. These results complement those from the flat
	(\href{https://dx.doi.org/10.1215/S0012-7094-95-07801-6}{M.\ Ashbaugh \& R.\ Benguria, 1995}; \href{https://doi.org/10.1007/BF00375124}{N.\ Nadirashvili, 1995}) and negatively curved (\href{https://doi.org/10.1016/j.aim.2020.107113}{A.\ Krist\'aly, 2020})
	cases that are valid only in 2 and 3 dimensions, and at the same time also provide the first positive answer to Lord Rayleigh's conjecture in higher dimensions.\
	 The proofs rely on an Ashbaugh--Benguria--Nadirashvili--Talenti  nodal-decomposition argument, on the L\'evy--Gromov isoperimetric inequality, on fine properties of Gaussian hypergeometric functions and on sharp spectral gap estimates of fundamental tones for both small and large clamped spherical caps. Our results show that positive curvature enhances genuine differences between low- and high-dimensional settings,  a tacitly accepted paradigm in the theory of vibrating clamped plates. In the limit case -- when the Ricci curvature is non-negative -- we establish  a Lord Rayleigh-type isoperimetric inequality  that involves the asymptotic volume ratio of the non-compact complete Riemannian manifold; moreover, the inequality is strongly rigid in 2 and 3 dimensions, i.e.,\ if equality holds for a given clamped plate then the manifold is isometric to the Euclidean space.

\end{abstract}

\maketitle
\vspace{-1.7cm}

	\begin{dedication}
	{Dedicated to the memory of my professor and friend, Csaba Varga 
		$($1959--2021$)$
	}
\end{dedication}
\tableofcontents

%
%
%
%
\section{Introduction}

The paper is devoted to the analogue of Lord Rayleigh's conjecture, concerning the lowest principal frequency of  vibrating clamped plates on positively curved spaces. Our results can be viewed as the concluding piece in the theory of clamped plates after the seminal works of  Ashbaugh and Benguria \cite{A-B} and Nadirashvili \cite{Nad} in Euclidean spaces, and the recent paper by the author \cite{Kristaly-Adv-math} on non-positively curved spaces, all valid in dimensions 2 and 3. To our surprise,  positively curved spaces provide an appropriate geometric setting for the validity of Lord Rayleigh's conjecture  not only in  dimensions 2 and 3  for any clamped plate, but also in dimensions beyond  3 for sufficiently large domains. Before presenting our results in details, we shortly recall some historical milestones related to the subtleties in the theory of vibrating clamped plates.

\subsection{Historical aspects}

The original problem appeared in 1877, when John William Strutt, 3rd Baron Rayleigh \cite{Rayleigh} formulated, inter alia, two isoperimetric inequalities arising from mathematical physics;  he claimed that the disc has the minimal principal frequency among either clamped plates or fixed membranes with a given area.\ Although Lord Rayleigh formulated his conjectures for planar domains, he surely had the feeling that the statement should be valid in any dimension as subsequent literature referred to these conjectures; in particular, both the ``clamped plate" and ``fixed membrane" notions are commonly used in any dimension. 

In the 1920s,  Lord Rayleigh's conjecture for the fixed membrane problem
  has been confirmed independently by Faber \cite{Faber} and Krahn \cite{Krahn},  by showing that the
  principal/first Dirichlet eigenvalue of the Laplace operator for any bounded open domain $\Omega\subset \mathbb R^n$   is not less than the corresponding Dirichlet eigenvalue of a  ball $\Omega^\star\subset \mathbb R^n$ that has the same volume as $\Omega$.
Their arguments are based on the classical isoperimetric inequality in $\mathbb R^n$ combined with a Schwarz-type rearrangement. In the 1980s, Lord Rayleigh's conjecture for  fixed membranes has been solved on Riemannian manifolds of positive Ricci curvature, see B\'erard and Meyer \cite{BM}, and on those Cartan--Hadamard manifolds (i.e., complete and simply connected Riemannian manifolds with non-positive sectional curvature) which verify the Cartan--Hadamard conjecture, see Chavel \cite{Chavel}; all these arguments rest upon the sign-definite character of the first eigenfunction for the second-order fixed membrane problems.  


Transferring simply the arguments from the fixed membrane problem to clamped plates  can be elusive.
To be more precise, the clamped plate problem  can be formulated as 
\begin{equation}\label{CP-euclidean}
\left\{
\begin{array}
	[c]{@{~}r@{~}l@{~}lll@{~}}
	\Delta^{2}u & = & \Lambda_{0}u & \text{in} & \Omega,\\
	u&= & \dfrac{\partial u}{\partial\mathbf{n}}  =  0 & \text{on} &\partial\Omega,
\end{array}
\right.
\end{equation}
where   $\Omega\subset \mathbb R^n$  $(n\geq 2)$ is a bounded open domain,   $\Delta^2$ is the bi-Laplace operator,   $\frac{\partial }{\partial \textbf{n}}$ is the outward normal derivative on $\partial \Omega$, while the principal frequency (or fundamental tone) of the clamped plate $\Omega$ can be characterized variationally as
\begin{equation}\label{variational-charact-0}
	\Lambda_0\coloneqq \Lambda_0(\Omega)=\inf_{u\in W_0^{2,2}(\Omega)\setminus \{0\}}\frac{\displaystyle \int_{\Omega}(\Delta u)^2 {\rm d}x}{\displaystyle \int_{\Omega}u^2 {\rm d}x}.
\end{equation} 
Assuming that the first eigenfunction of \eqref{CP-euclidean} is of fixed sign  for a domain $\Omega\subset \mathbb R^n$ ($n\geq 2$), 
Szeg\H o \cite{Szego} proved in the early 1950s 
the validity of Lord Rayleigh's conjecture, i.e., $\Lambda_0(\Omega)\geq \Lambda_0(\Omega^\star)$,  where $\Omega^\star$ is a ball in $\mathbb R^n$ with the same volume as $\Omega$. Szeg\H o's proof used standard symmetrization/rearrangement techniques and he implicitly expressed his hope that any clamped domain should produce  principal eigenfunctions of fixed sign.  However, his hope has been shattered soon, as Duffin \cite{Duffin} (see also Coffman, Duffin and Shaffer \cite{CDS}) constructed clamped plates with  sign-changing first eigenfunctions. Accordingly,   Szeg\H o's initial argument for proving Lord Rayleigh's conjecture for clamped plates failed,  becoming a hard
nut to crack though several decades;
  in fact, the main obstructions to follow the arguments from the fixed membrane problem are formed by both
  the lack of a maximum principle for the fourth-order clamped plate problem and the failure of a suitable rearrangement of the first eigenfuntion $u_1$ in \eqref{CP-euclidean} with a suitable estimate of $\displaystyle\int_\Omega (\Delta u_1)^2{\rm d}x$.  These phenomena are deeply analyzed in the monograph of  Gazzola,  Grunau and Sweers \cite{Gazzola-etc}.

A breakthrough idea has been arising from Talenti \cite{Talenti} in the early 1980s, who decomposed the domain $\Omega\subset \mathbb R^n$  corresponding to the positive and negative parts of the first eigenfunction $u_1$ in \eqref{CP-euclidean}. Using a Schwarz-type rearrangement of these domains/functions, he was able to control  in a suitable manner the quantities $\displaystyle\int_\Omega (\Delta u_1)^2{\rm d}x$ and $\displaystyle\int_\Omega  u_1^2{\rm d}x$ in  \eqref{variational-charact-0},  obtaining a two-ball minimization problem by means of which  he provided the non-sharp estimate $\Lambda_0(\Omega)\geq t_n \Lambda_0(\Omega^\star)$ with $t_n\in \left[\frac{1}{2},1\right)$ for every $n\geq 2$ and $\lim_{n\to \infty} t_n=\frac{1}{2}$.

 
 More than 115 years had to pass before Nadirashvili \cite{Nad-0,Nad}  announced the solution to the original (i.e., $2$-dimensional) Lord Rayleigh's conjecture, by slightly  modifying Talenti's  argument. Inspired by Nadirashvili's achievement,  Ashbaugh and  Benguria \cite{A-B}  proved Lord Rayleigh's conjecture in dimensions 2 and 3, by using sharp estimates in  Talenti's decomposition  combined with fine properties of Bessel functions. We note that the conjecture is still open in higher dimensions; however, almost simultaneously with \cite{A-B}, Ashbaugh and Laugesen \cite{A-L} provided an asymptotically sharp estimate, i.e., $\Lambda_0(\Omega)\geq {w}_n \Lambda_0(\Omega^\star)$ with $w_n\in [0.89,1)$ for every $n\geq 4$ and $\lim_{n\to \infty} w_n=1$. Recently, Chasman and Langford \cite{CL2} proved a non-sharp isoperimetric inequality for clamped plates on Gaussian spaces, stating that $\Gamma_w(\Omega)\geq  c\Gamma_w(\Omega^\star)$ for some $ c=c(\Omega,n)\in (0,1)$, where 
   $\Gamma_w(\Omega)$ and $\Gamma_w(\Omega^\star)$ are the fundamental tones of clamped plates with respect to the Gaussian density $w$.  
 
Since the fixed membrane problem of curved spaces is  fully described, see \cite{BM, Chavel},  a similar question naturally arises also for clamped plates. Fixing  a complete $n$-dimensional $(n\geq 2)$ Riemannian manifold $(M,g)$, 	 we consider instead of (\ref{variational-charact-0}) the fundamental tone 
 \begin{equation}\label{variational-charact}
 	\Lambda_g(\Omega)\coloneqq \inf_{u\in W_0^{2,2}(\Omega)\setminus \{0\}}\frac{\displaystyle \int_{\Omega}(\Delta_g u)^2 {\rm d}v_g}{\displaystyle \int_{\Omega}u^2 {\rm d}v_g},
 \end{equation} 
of $\Omega\subset M$,    where  $\Delta_g$ and ${\rm d}v_g$ are the  Laplace--Beltrami operator and canonical measure on $(M,g)$.
  
   When $(M,g)$ is a Cartan--Hadamard manifold with sectional curvature bounded from above by $ -\kappa\leq 0$,
the author \cite{Kristaly-Adv-math} proved  the validity of Lord Rayleigh's conjecture in dimensions 2 and 3 for small clamped plates, i.e., $\Lambda_g(\Omega)\geq \Lambda_\kappa(\Omega^\star)$ holds for every domain $\Omega\subset M$ having volume $V_g(\Omega)=V_\kappa(\Omega^\star)\leq c_n/\kappa^{n/2}$ with $c_2\approx 21.031$ and  $c_3\approx 1.721$, respectively;  here $V_g(\Omega)$  and $V_\kappa(\Omega^\star)$ denote the volumes of $\Omega$ in $(M,g)$ and the geodesic ball $\Omega^\star$ in the space form $(N_\kappa^n,g_\kappa)$ of constant curvature $-\kappa$, respectively, while  $\Lambda_\kappa(\Omega^\star)$ stands for the fundamental tone of $\Omega^\star$ in $(N_\kappa^n,g_\kappa)$ corresponding to \eqref{variational-charact}. 
 In particular,  the above result provides in the limit  case $\kappa\to 0$ the main result of Ashbaugh and  Benguria \cite{A-B}. The proofs in \cite{Kristaly-Adv-math} are based on the generalized Cartan-Hada\-mard conjecture (see e.g.\ Kloeckner and Kuperberg \cite{KK}) and peculiar properties of the Gaussian hypergeometric function, both valid only in dimensions 2 and 3. Some non-sharp estimates of $\Lambda_g(\Omega)$ for clamped plates $\Omega\subset M$ are also provided in dimensions beyond 3 in the same geometrical setting. 

Since a systematic study concerning the fundamental tone of clamped plates on positively curved Riemannian manifolds is unavailable, the objective of the present paper is to fill this gap by solving the analogue of Lord Rayleigh's conjecture (and proving further related results) in this geometric setting; it turns out that unexpected phenomena occur with respect to the non-positively curved framework that are  presented in the next subsection.

	\subsection{Main results}
	Let $(M,g)$ be a compact $n$-dimensional $(n\geq 2)$ Riemannian manifold $(M,g)$ with Ricci curvature ${\sf Ric}_{(M,g)}\geq (n-1)\kappa>0$, and consider the clamped plate problem   
	\begin{equation}\label{CP-problem-0}
	\left\{
	\begin{array}
		[c]{@{~}r@{~}l@{~}lll@{~}}
		\Delta_g^{2}u & = & \Lambda u & \text{in} & \Omega,\\
		u&= & \dfrac{\partial u}{\partial\mathbf{n}}  =  0 & \text{on} &\partial\Omega,
	\end{array}
	\right.
	\end{equation}
where $\Omega\subset M$ is a bounded open domain,      $\Delta_g^2$ stands for the biharmonic Laplace--Beltrami operator on $(M,g)$ and $\frac{\partial }{\partial \textbf{n}}$ is the outward normal   derivative on $\partial \Omega$. The fundamental tone $\Lambda_g(\Omega)$ of the set $\Omega\subset M$ associated with \eqref{CP-problem-0} is variationally expressed  by \eqref{variational-charact}. As a model space, let $\mathbb S_\kappa^n\subset \mathbb R^{n+1}$ be the $n$-dimensional sphere with radius $1/\sqrt{\kappa}$ (i.e., with constant curvature $\kappa>0$), endowed with its natural Riemannian metric $g_\kappa$; for simplicity of notation, we use $\Lambda_{\kappa}(\Omega)$ and $V_\kappa(\Omega)$ instead of the fundamental tone  $\Lambda_{g_\kappa}(\Omega)$ and  volume $V_{g_\kappa}(\Omega)$, respectively, of the open domain $\Omega \subset \mathbb S_\kappa^n$. Moreover, $d_\kappa(N,x)$ is the geodesic distance on $\mathbb S_\kappa^n$ between the North pole $N=(0,\ldots,0,1/\sqrt{\kappa})$ and $x\in \mathbb S_\kappa^n.$

A deeper understanding of the feature of fundamental tones for generic clamped plates on  $(M,g)$ motivates the investigation of some particular cases.	Our first result provides a quite complete picture on the behavior of the first eigenfunctions on 2-dimensional spherical belts depending on their relative width. Given $\kappa>0$, let $$B_\kappa(r,R)=\{x\in \mathbb S_\kappa^2:r<d_\kappa(N,x)<R\}$$ be the \textit{spherical belt} with radii $0<r<R<\pi/\sqrt{\kappa}.$ 
	 We also recall the critical Coffman--Duffin--Schaffer  constant ${\sf C}_{CDS}\approx{762.3264}$ (see \cite{CDS}), whose origin will be explained in the proof of the following result.


%
%



	
	\begin{theorem}\label{theorem-belt} 
		{$($\rm\textbf{Spherical belts}$)$}
		Let $R>r>0$. Then there exists $\kappa_0\in (0,\pi^2/R^2)$ with the following properties.  
		\begin{itemize}
			\item[{\rm (i)}] {\rm Narrow relative width:} if $R/r<{\sf C}_{CDS}$, then for every $\kappa\in (0,\kappa_0)$, the first eigenfunction of problem \eqref{CP-problem-0} on the spherical belt $\Omega\coloneqq B_\kappa(r,R)\subset \mathbb S_\kappa^2$  is of fixed sign$.$ 
			
			\item[{\rm (ii)}] {\rm Wide relative width:} if $R/r>{\sf C}_{CDS}$, then for every $\kappa\in (0,\kappa_0)$, the  first eigenfunctions of problem \eqref{CP-problem-0} on the spherical belt $\Omega\coloneqq B_\kappa(r,R)\subset \mathbb S_\kappa^2$ are sign-changing,  having a pair of azimuthally  opposite nodal circular arcs$.$
		\end{itemize}
	\end{theorem}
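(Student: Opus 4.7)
The plan is to treat the problem as a perturbation of the classical clamped flat annulus $A(r,R)=\{x\in\mathbb{R}^2 : r<|x|<R\}$ and let $\kappa\to 0^+$. In geodesic polar coordinates $(\rho,\theta)$ centered at the North pole the metric on $\mathbb{S}_\kappa^2$ reads $g_\kappa=d\rho^2+\kappa^{-1}\sin^2(\sqrt{\kappa}\rho)\,d\theta^2$, and the belt $B_\kappa(r,R)$ corresponds to the (coordinate) rectangle $\{r<\rho<R\}\times \mathbb{S}^1$, independently of $\kappa$. Because $\Delta_{g_\kappa}^2$ commutes with rotations, eigenfunctions decompose by Fourier mode $m\in\mathbb{Z}_{\geq 0}$ as $u_m(\rho,\theta)=\phi_m(\rho)\cos(m\theta+\theta_0)$, and each $\phi_m$ satisfies a fourth-order ODE on $(r,R)$ with clamped endpoints $\phi_m(r)=\phi_m'(r)=\phi_m(R)=\phi_m'(R)=0$. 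This reduces the problem to a family of one-dimensional eigenvalue problems whose lowest eigenvalues $\Lambda_0^{(m)}(\kappa)$ satisfy $\Lambda_\kappa(B_\kappa(r,R))=\min_{m\geq 0}\Lambda_0^{(m)}(\kappa)$, with $\Lambda_0^{(m)}(\kappa)\to \infty$ as $m\to\infty$ uniformly for $\kappa$ in a bounded range.

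The second step is a continuity argument. As $\kappa\to 0^+$ the metric $g_\kappa$ converges smoothly on $[r,R]$ to the flat polar metric $d\rho^2+\rho^2 d\theta^2$; in particular the coefficients of each radial ODE depend continuously on $\kappa$ on $[r,R]$, so Kato-type perturbation theory for self-adjoint operators with compact resolvent yields $\lim_{\kappa\to 0^+}\Lambda_0^{(m)}(\kappa)=\Lambda_0^{(m)}(0)$ for every $m$, where $\Lambda_0^{(m)}(0)$ is the analogous one-dimensional eigenvalue for the flat annulus $A(r,R)$. Moreover, suitably normalized first eigenfunctions $\phi_m^{(\kappa)}$ converge in $W^{2,2}(r,R)$, hence in $C^1([r,R])$ by one-dimensional Sobolev embedding.

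Third, one invokes the Coffman--Duffin--Schaffer analysis of clamped flat annuli: the critical constant ${\sf C}_{CDS}$ is precisely the unique value of $R/r$ at which $\Lambda_0^{(0)}(0)=\Lambda_0^{(1)}(0)$. When $R/r<{\sf C}_{CDS}$ the minimum over $m$ is strictly achieved at $m=0$ and the associated radial eigenfunction on $A(r,R)$ is of fixed sign; when $R/r>{\sf C}_{CDS}$ the minimum is strictly achieved at $m=1$, so the eigenspace is two-dimensional with elements $\phi_1(\rho)\cos(\theta+\theta_0)$ whose nodal set consists of a pair of azimuthally opposite circular arcs $\{\theta=\theta_0\pm\pi/2\}\cap B_\kappa(r,R)$. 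Choosing $\kappa_0$ small enough that these strict inequalities persist throughout $(0,\kappa_0)$, and so small that the $C^0$-convergence of the radial first eigenfunction preserves its sign-definiteness in case (i), the two dichotomous conclusions follow.

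The principal technical obstacle is the careful verification of the continuous dependence on $\kappa$ uniformly across the full family of angular modes $m$: one must show that no sequence $m_k\to\infty$ can supply $\Lambda_0^{(m_k)}(\kappa_k)\to\min\{\Lambda_0^{(0)}(0),\Lambda_0^{(1)}(0)\}$ as $\kappa_k\to 0^+$, which is where the a~priori lower bound $\Lambda_0^{(m)}(\kappa)\gtrsim m^4$ (uniform in $\kappa$ small) is used. A secondary subtlety specific to case (i) is that preservation of the strictly positive (or negative) sign under perturbation requires a uniform $C^1$-control of the eigenfunction up to the boundary, so that both the interior non-vanishing and the clamped-boundary behavior survive the passage to small $\kappa>0$.
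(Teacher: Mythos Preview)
Your overall strategy coincides with the paper's: separate by angular Fourier mode, reduce to comparing the lowest $m=0$ and $m=1$ radial eigenvalues, pass to the flat limit $\kappa\to 0^+$, and invoke the Coffman--Duffin--Schaffer dichotomy for the Euclidean annulus. The execution differs. The paper writes the $m=0$ and $m=1$ radial solutions explicitly as Gaussian hypergeometric functions (and their logarithmic companions), obtains the eigenvalue conditions as $4\times 4$ determinants, and computes the $\kappa\to 0$ limits termwise via the asymptotic lemma (Proposition~\ref{hipergeometrikus-Bessel-0-1}) to land on the Bessel determinants \eqref{determinant-sign-preserving-1} and \eqref{determinant-sign-changing-1}; you instead appeal to Kato-type resolvent convergence and one-dimensional Sobolev embedding. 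Your route is softer and avoids special-function computations; the paper's route is more explicit and self-contained but costs several identities from Appendix~\ref{section-8}.

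One place where your outline is unnecessarily delicate: the ``secondary subtlety'' about preserving the sign of the $m=0$ eigenfunction by $C^1$-boundary control. Since the clamped eigenfunction vanishes to \emph{second} order at $\rho=r,R$, even $C^1$-closeness to a positive limit does not obviously rule out a small negative dip near the boundary; you would really need $C^2$-control together with strict positivity of the second normal derivative in the flat problem. The paper bypasses this completely: it observes (citing Leighton--Nehari) that the first eigenfunction of the fourth-order self-adjoint radial ODE with clamped endpoints is of fixed sign for \emph{every} $\kappa$, not just in the limit. You should invoke that ODE fact directly and drop the perturbation argument for sign preservation.
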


Theorem \ref{theorem-belt}  sheds light on the possibility of the rippling behavior of the first eigenfunctions on positively curved manifolds, similarly to the flat case, see also Figure \ref{1-abra}/(b). The proof of  Theorem \ref{theorem-belt} is based on fine properties of Gaussian hypergeometric functions and a careful asymptotic argument which traces us back to Euclidean annuli studied by  Coffman, Duffin and Schaffer \cite{CDS}.

Our second result contains precise asymptotic estimates of the fundamental tones for small and large clamped spherical caps, which will play a key role  in the proof of Lord Rayleigh's conjecture. Given $\kappa>0$ and $L\in (0,\pi/\sqrt{\kappa})$, the \textit{spherical cap} is  
$$C_\kappa^n(L)=\{x\in \mathbb S_\kappa^n:d_\kappa(N,x)<L\}. $$
The aforementioned asymptotic estimates are based on the first zeros of the transcendental equations 
\begin{equation}\label{transzdendental-2-dimenzio}
	\frac{\pi}{2}\tan\left(\pi\sqrt{\frac{1}{4}+\mu}\right)-\Psi\left(\sqrt{\frac{1}{4}+\mu}+\frac{1}{2}\right)+\Re\Psi\left(\sqrt{\frac{1}{4}-\mu}+\frac{1}{2}\right)=0, \ \ \mu>0,
\end{equation}
where $\Psi\coloneqq(\ln \Gamma)'$ is the Digamma function and $\Re z$ is the real part of $z\in \mathbb C,$ and
\begin{equation}\label{transzdendental-3-dimenzio}
	\sqrt{\mu-1}\coth\big(\pi\sqrt{\mu-1}\big)-\sqrt{\mu+1}\cot\big(\pi\sqrt{\mu+1}\big)=0,\ \ \mu>1.
\end{equation}
 For later use, if  $\nu\geq 0$ is fixed,  $J_\nu$ and $I_\nu$ stand for the  Bessel and modified Bessel functions of the first kind, while $\mathfrak  h_{\nu}$ and $\mathfrak  j_{\nu}$ denote the first positive zeros of the cross-product ${J'_{\nu}}{I_{\nu}}-{J_{\nu}}{I'_{\nu}}$ and the Bessel function $J_\nu$, respectively. 

\begin{theorem}\label{aszimptotak-CP-pozitiv} {$($\rm\textbf{Spherical caps}$)$}
If  $n \in \mathbb N_{\geq 2}$, $\kappa > 0$ and $L\in (0,\pi/\sqrt{\kappa})$ are fixed and $C_\kappa^n(L)\subset \mathbb S_\kappa^n$  is a spherical cap, then we have the following asymptotic estimates.
	\begin{itemize}
		\item[(i)] {\rm Small spherical caps}$:$   \begin{equation}\label{small-caps}
			\Lambda_{\kappa}(C_\kappa^n(L))\sim\frac{\mathfrak  h_{\frac{n}{2}-1}^4}{L^4}\ \ {as}\ \ L\to 0.
		\end{equation}
		\item[(ii)] {\rm Large spherical caps}$:$   
		\begin{equation}\label{large-caps}
			\Lambda_{\kappa}(C_\kappa^n(L))\sim  \left\{ \begin{array}{lll}
				\mu_n^2 \kappa^2 &\mbox{if} & n\in \{2,3\}, \\
				0 &\mbox{if} &  n\geq 4,
			\end{array}\right. \ \ {as}\ \ L\to \frac{\pi}{\sqrt{\kappa}},
		\end{equation}	
		where  $\mu_2\approx 0.9125$ and $\mu_3\approx 1.0277$ are the smallest positive zeros to the transcendental equations \eqref{transzdendental-2-dimenzio} and \eqref{transzdendental-3-dimenzio}, respectively. 
	\end{itemize}
\end{theorem}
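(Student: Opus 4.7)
The plan is to reduce \eqref{CP-problem-0} on $C_\kappa^n(L)$ to a fourth-order radial ODE via the $O(n)$-symmetry of the cap, factor it through the square root of the spectral parameter to obtain two associated Legendre/Gegenbauer equations, and then analyze the resulting Gaussian hypergeometric solutions in the two asymptotic regimes $L\to 0$ and $L\to\pi/\sqrt{\kappa}$. Writing a radial function as $v(t)$ with $t=d_\kappa(N,\cdot)\in(0,L)$, the radial Laplace--Beltrami operator is $\mathcal{L}v=v''+(n-1)\sqrt{\kappa}\cot(\sqrt{\kappa}\,t)\,v'$, and setting $\Lambda=\mu^2\kappa^2$ we factor
\[
\mathcal{L}^2-\mu^2\kappa^2=(\mathcal{L}-\mu\kappa)(\mathcal{L}+\mu\kappa),
\]
so each second-order factor admits solutions expressible via ${}_2F_1(a,b;c;\sin^2(\sqrt{\kappa}t/2))$ with one branch regular and one singular at $t=0$. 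Enforcing regularity at $N$ leaves a two-parameter family, and the two clamped conditions at $t=L$ produce a transcendental equation $F_n(L,\mu)=0$ whose smallest positive root determines the radial fundamental tone; that this equals $\Lambda_\kappa(C_\kappa^n(L))$ (i.e.\ non-radial spherical-harmonic modes give strictly larger eigenvalues) follows from a standard separation-of-variables argument, analogous to the Euclidean treatment in \cite{A-B} and the hyperbolic one in \cite{Kristaly-Adv-math}.

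For the small-cap limit (i), I would rescale $s=t/L\in(0,1)$, so that $\mathcal{L}$ becomes $L^{-2}\bigl(\partial_s^2+(n-1)\sqrt{\kappa}L\cot(\sqrt{\kappa}Ls)\partial_s\bigr)$, whose drift coefficient converges uniformly on compact subsets of $(0,1]$ to the Euclidean $(n-1)/s$ as $L\to 0$. Consequently the rescaled fourth-order equation converges, coefficient-wise, to the Euclidean clamped-plate problem on the unit ball $B_1\subset\mathbb R^n$, whose first eigenvalue is the classical $\mathfrak h_{n/2-1}^4$ arising from the Bessel cross-product $J'_{n/2-1}I_{n/2-1}-J_{n/2-1}I'_{n/2-1}=0$. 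Continuous dependence of the principal eigenvalue on the coefficients (via explicit quasi-mode constructions from Euclidean eigenfunctions for the upper bound, matched by an inverse comparison for the lower bound) then yields $L^4\Lambda_\kappa(C_\kappa^n(L))\to\mathfrak h_{n/2-1}^4$, which is \eqref{small-caps}.

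For the large-cap limit (ii), monotonicity of $L\mapsto\Lambda_\kappa(C_\kappa^n(L))$ provides a limit $\Lambda^\infty$ as $L\to\pi/\sqrt{\kappa}$, equal to the Rayleigh infimum on $W_0^{2,2}(\mathbb S_\kappa^n\setminus\{S\})$. For $n\geq 4$, since $W^{2,2}(\mathbb S_\kappa^n)$ does not embed into $C^0$, a single point has vanishing biharmonic $2$-capacity; explicit cutoffs $\eta_\delta$ equal to $1$ on $\mathbb S_\kappa^n\setminus B_{2\delta}(S)$ and vanishing near $S$ (of logarithmic type for $n=4$, power type for $n\geq 5$) satisfy $\int(\Delta_{g_\kappa}\eta_\delta)^2\,{\rm d}v_{g_\kappa}\to 0$ while $\int\eta_\delta^2\,{\rm d}v_{g_\kappa}$ is bounded below, forcing $\Lambda^\infty=0$. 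For $n\in\{2,3\}$, expand the regular-at-$N$ solutions of $\mathcal{L}v=\pm\mu\kappa v$ near $t=\pi/\sqrt{\kappa}$ via the standard ${}_2F_1$ connection formulas between $z=0$ and $z=1$: in $n=2$ the resonance $c=a+b$ produces a logarithmic branch whose coefficient involves the Digamma function $\Psi$, and combining both factors with the clamped conditions in the singular limit yields exactly \eqref{transzdendental-2-dimenzio} under the identifications $\sqrt{1/4+\mu}=\nu+\frac12$ for $\mathcal{L}v=-\mu\kappa v$ and its companion; in $n=3$ the hypergeometric solutions reduce to elementary $\sin$/$\cos$/$\sinh$/$\cosh$ expressions, and the clamped conditions collapse directly to \eqref{transzdendental-3-dimenzio}. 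The smallest positive root $\mu_n$ then gives $\Lambda^\infty=\mu_n^2\kappa^2$.

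The main obstacle will be the singular asymptotic analysis near $t=\pi/\sqrt{\kappa}$ in the $n=2$ case, where the logarithmic branch of ${}_2F_1$ at $z=1$ must be propagated through the factorization $(\mathcal{L}-\mu\kappa)(\mathcal{L}+\mu\kappa)v=0$ and combined with Digamma-function identities in order to recover the precise form of \eqref{transzdendental-2-dimenzio}. A secondary subtlety is to verify that the limit of $\Lambda_\kappa(C_\kappa^n(L))$ is the \emph{smallest} positive root---rather than a higher one---of the limiting equations, which I would handle by a continuity and nodal-domain argument tracking the motion of the principal eigenvalue branch as $L$ increases in $(0,\pi/\sqrt{\kappa})$.
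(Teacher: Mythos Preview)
Your proposal is essentially correct, and for the $n\in\{2,3\}$ large-cap analysis it coincides with the paper's argument (Legendre/Digamma asymptotics for $n=2$, closed trigonometric/hyperbolic forms for $n=3$). Two parts, however, take a genuinely different route from the paper.

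For small caps, the paper does not rescale and invoke coefficient-convergence of the ODE. Instead it works directly with the transcendental equation $\mathcal K_{\kappa,n}(\alpha_L,\lambda_L)=0$ characterising $\Lambda_\kappa(C_\kappa^n(L))^{1/4}$, postulates $\lambda_L\sim C/L$, and applies Proposition~\ref{hipergeometrikus-Bessel-0-1} (the explicit ${}_2F_1\to J_\mu,I_\mu$ limit) together with \eqref{F-differential} to identify $C=\mathfrak h_{n/2-1}$ as the first zero of the Bessel cross-product. This is a hard, one-line computation; your rescaling argument is softer and requires you to supply both directions of the eigenvalue convergence (your ``inverse comparison for the lower bound'' is the part that would need work).

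For large caps in dimensions $n\ge4$, the paper again stays with the explicit equation: rewriting $\mathcal K_{\kappa,n}(\alpha_L,\lambda_L)=0$ via \eqref{F-differential} as \eqref{egyenlet-atirva} and applying the hypergeometric singular asymptotics \eqref{singularity-1}--\eqref{singularity-2} at $z=1$ forces the limit root to be $0$. Your capacity argument---removability of a point for $W^{2,2}$ precisely when $n\ge4$, witnessed by explicit cutoffs---is a different and more conceptual proof: it explains the dimensional threshold through the Sobolev embedding rather than through Gamma-function bookkeeping, and it bypasses the transcendental equation entirely. The trade-off is that the paper's computation stays within the same framework used for $n\in\{2,3\}$ and for the later half-cap comparison in \S\ref{subsection-reduction}, whereas your argument, while cleaner here, does not feed into those subsequent steps.
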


As expected, in small scales,  the fundamental tone has a Euclidean character (cf.\ relation \eqref{small-caps}); indeed, for any $L>0$, one has that 	$\Lambda_0(B_0(L))={\mathfrak  h_{\frac{n}{2}-1}^4}/{L^4},$ where $\Lambda_0(\Omega)$ comes from \eqref{variational-charact-0} and $B_0(L)\subset \mathbb R^n$ is the $n$-dimensional Euclidean ball of radius $L>0$ and center 0.  On the other hand, for large spherical caps (cf.\ relation \eqref{large-caps}), we surprisingly see an essential difference: while in high dimensions the fundamental tones are gapless, in low dimensions there are spectral gaps given by means of the first positive zeros to the transcendental equations \eqref{transzdendental-2-dimenzio} and \eqref{transzdendental-3-dimenzio}, respectively. In fact, relation \eqref{large-caps} provides another evidence to the tacitly accepted view concerning the difference between the low- and high-dimensional character of the fundamental tone for vibrating clamped plates. 
We also note that relation \eqref{large-caps} specifies a gap in  Cheng, Ichikawa and Mametsuka \cite[p. 676]{Cheng-Ichikawa-Mametsuka}, who  -- regardless of dimension -- claimed that the fundamental tone always tends to $0$ for clamped plates  converging to the whole sphere; note however that this statement is valid only for fixed membranes on the sphere, see e.g.\ Betz,  C\'amera and  Gzyl \cite{BCG}. 
The accuracy of estimates \eqref{small-caps} and  \eqref{large-caps} are shown in Tables \ref{table-11} and \ref{table-1}, respectively.  


Theorem \ref{aszimptotak-CP-pozitiv} paves the way to prove Lord Rayleigh's conjecture  on positively curved  spaces for any clamped plate in 2 and 3 dimensions, as well as for sufficiently large domains in high dimensions. The precise statement of our main result reads as follows:


	\begin{theorem}\label{fotetel-CP-pozitiv}  {$($\rm\textbf{Lord Rayleigh's conjecture; positively curved spaces}$)$}
	Let $(M,g)$ be a compact $n$-dimensional Riemannian manifold with ${\sf Ric}_{(M,g)}\geq (n-1)\kappa>0$ where $n\geq 2$. Then there exists $v_{n}\in [0,1)$ $($not depending on $\kappa>0),$ with $v_{2}=v_{3}=0$ and $v_n>0$ for $n\geq 4$ such that if  $\Omega\subset M$ is a smooth domain and $\Omega^\star\subset \mathbb S^n_\kappa$ is a  spherical cap with $\frac{V_g(\Omega)}{V_g(M)}=\frac{V_{\kappa}(\Omega^\star)}{V_{\kappa}(\mathbb S^n_\kappa)}> v_{n},$   then 
	\begin{equation}\label{foegyenlotlenseg}
	\Lambda_g(\Omega)\geq \Lambda_{\kappa}(\Omega^\star).
	\end{equation}
		Equality holds in \eqref{foegyenlotlenseg} if and only if $(M,g)$ is isometric to $(\mathbb S_\kappa^n,g_{\kappa})$ and $\Omega$ is isometric to  $\Omega^\star$. 	In addition, 
$
v_\infty\coloneqq\displaystyle\limsup_{n\to \infty} v_{n}<1.$
\end{theorem}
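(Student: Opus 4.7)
The plan is to adapt the Ashbaugh--Benguria--Nadirashvili--Talenti nodal-decomposition argument to the positively curved setting, with the L\'evy--Gromov isoperimetric inequality replacing the Euclidean isoperimetric inequality, and to resolve the resulting two-cap minimization by means of Theorem~\ref{aszimptotak-CP-pozitiv}.

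\textbf{Nodal decomposition and spherical symmetrization.} Let $u_1$ be an $L^2$-normalized first eigenfunction of \eqref{CP-problem-0} on $\Omega$ with eigenvalue $\Lambda=\Lambda_g(\Omega)$. Split $\Omega$ along the nodal set into $\Omega_\pm=\{\pm u_1>0\}$, set $u_\pm=\pm u_1\chi_{\Omega_\pm}$ and $f_\pm=-\Delta_g u_\pm$, so that $(u_\pm,f_\pm)$ solves the Poisson--eigenvalue system on $\Omega_\pm$ with zero boundary data and
\[
\Lambda=\frac{\int_{\Omega_+}f_+^2\,{\rm d}v_g+\int_{\Omega_-}f_-^2\,{\rm d}v_g}{\int_{\Omega_+}u_+^2\,{\rm d}v_g+\int_{\Omega_-}u_-^2\,{\rm d}v_g}.
\]
Let $\Omega^\sharp_\pm\subset\mathbb S^n_\kappa$ be the spherical caps centered at $N$ with $V_\kappa(\Omega^\sharp_\pm)/V_\kappa(\mathbb S^n_\kappa)=V_g(\Omega_\pm)/V_g(M)$, so that $V_\kappa(\Omega^\sharp_+)+V_\kappa(\Omega^\sharp_-)=V_\kappa(\Omega^\star)$. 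Since ${\sf Ric}_{(M,g)}\geq(n-1)\kappa$, the L\'evy--Gromov isoperimetric inequality applied level-by-level to $u_\pm$ (via the coarea formula) yields a Talenti-type Poisson comparison: there exist radially symmetric decreasing $v_\pm,g_\pm$ on $\Omega^\sharp_\pm$ with $-\Delta_{g_\kappa}v_\pm=g_\pm$, $v_\pm|_{\partial\Omega^\sharp_\pm}=0$, satisfying
\[
\int_{\Omega^\sharp_\pm} v_\pm^2\,{\rm d}v_{g_\kappa}\geq\int_{\Omega_\pm}u_\pm^2\,{\rm d}v_g,\qquad \int_{\Omega^\sharp_\pm}g_\pm^2\,{\rm d}v_{g_\kappa}=\int_{\Omega_\pm}f_\pm^2\,{\rm d}v_g.
\]

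\textbf{Two-cap minimization.} Substituting these comparisons yields
\[
\Lambda\geq \frac{\int_{\Omega^\sharp_+}(\Delta_{g_\kappa} v_+)^2+\int_{\Omega^\sharp_-}(\Delta_{g_\kappa} v_-)^2}{\int_{\Omega^\sharp_+}v_+^2+\int_{\Omega^\sharp_-}v_-^2},
\]
so \eqref{foegyenlotlenseg} reduces to the two-cap estimate: for all admissible $v_\pm$ on spherical caps $A=\Omega^\sharp_+$ and $B=\Omega^\sharp_-$ with $V_\kappa(A)+V_\kappa(B)=V_\kappa(\Omega^\star)$, the above Rayleigh-type quotient is bounded below by $\Lambda_\kappa(\Omega^\star)$. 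The radial eigenfunctions on spherical caps admit an explicit representation via the Gaussian hypergeometric function, and a careful analysis of these -- in the spirit of Ashbaugh--Benguria -- reduces the two-cap problem to a quantitative comparison of the fundamental tones of $A$ and $B$. For $n\in\{2,3\}$, the spectral gap in \eqref{large-caps} prevents $\Lambda_\kappa$ from dropping below $\mu_n^2\kappa^2$ even for caps approaching the whole sphere, while the blow-up \eqref{small-caps} handles the complementary small cap; together they force the minimum at the degenerate configuration $(A,B)=(\Omega^\star,\emptyset)$, giving $v_2=v_3=0$. For $n\geq 4$, the gaplessness $\Lambda_\kappa(C^n_\kappa(L))\to 0$ as $L\to\pi/\sqrt{\kappa}$ from \eqref{large-caps} lets the two-cap estimate fail when $\Omega^\star$ is too small, but matching \eqref{small-caps} against the large-cap decay of $\Lambda_\kappa(\Omega^\star)$ isolates a sharp threshold $v_n\in(0,1)$ above which the estimate still closes.

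\textbf{Rigidity and the bound $v_\infty<1$.} Equality in \eqref{foegyenlotlenseg} forces equality at every nodal superlevel set in the L\'evy--Gromov step; the rigidity part of L\'evy--Gromov then yields $(M,g)\cong(\mathbb S^n_\kappa,g_\kappa)$, and equality in the Talenti Poisson comparison forces $\Omega\cong\Omega^\star$. The bound $v_\infty<1$ comes from the same matching as in Step~2: both $\mathfrak h_{n/2-1}^4$ (controlling small caps) and the decay rate of $\Lambda_\kappa(C^n_\kappa(L))$ near $L=\pi/\sqrt\kappa$ admit $n$-dependent asymptotics that remain comparable, keeping $v_n$ uniformly bounded away from $1$. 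The main obstacle I foresee is the two-cap minimization in dimensions $n\geq 4$: beyond the Ashbaugh--Benguria regime one must identify the sharp threshold $v_n$ and verify the two-cap inequality across it, which relies crucially on the gaplessness phenomenon in \eqref{large-caps} and on a quantitative understanding of $\Lambda_\kappa(\Omega^\star)$ for large $\Omega^\star$ -- neither of which has a Euclidean analogue, which is precisely why the theorem succeeds in high dimensions here while the Euclidean version remains open.
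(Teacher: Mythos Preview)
Your overall architecture is right and matches the paper: nodal decomposition, L\'evy--Gromov in place of the Euclidean isoperimetric inequality, reduction to a coupled two-cap problem on $\mathbb S^n_\kappa$, and rigidity via equality in L\'evy--Gromov. But the two-cap step is treated as a black box, and the heuristic you offer for why it closes is not the actual mechanism.

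First, two technical slips in the decomposition. You set $f_\pm=-\Delta_g u_\pm$, but $u_\pm$ is not $C^2$ across the nodal set; the correct objects are the positive/negative parts $(\Delta_g u)_\pm$ of $\Delta_g u$ on all of $\Omega$, and the comparison functions $\mathcal J,\mathcal H$ are built by interlacing their rearrangements (Proposition~\ref{f-hasonlitas}, Theorem~\ref{talenti-result}). Second, you omit the \emph{coupling} boundary condition: the divergence theorem forces $(\sin\sqrt{\kappa}a)^{n-1}U_a'(\sqrt{\kappa}a)=(\sin\sqrt{\kappa}b)^{n-1}U_b'(\sqrt{\kappa}b)$, and without it the two-cap infimum is simply $\min\{\Lambda_\kappa(C^n_\kappa(a)),\Lambda_\kappa(C^n_\kappa(b))\}$, which is \emph{never} $\geq\Lambda_\kappa(\Omega^\star)$ when $a,b>0$.

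The main gap is your account of why the coupled minimum equals $\Lambda_\kappa(\Omega^\star)$. The paper shows (Theorem~\ref{fundametal-tones-double}) that the coupled minimum $\lambda_{\kappa,n}(\alpha,\beta)^4$ is the first positive root of an explicit equation \eqref{cross-product-equation} in Gaussian hypergeometric functions, then proves a monotonicity property (Proposition~\ref{monotonicity-K}) reducing \eqref{amit-kell-igazolni-2} to a single \emph{half-cap comparison}: with $L_0$ defined by $2V_\kappa(C^n_\kappa(L_0))=V_\kappa(C^n_\kappa(L))$, one must show $\mathfrak f_{\kappa,n,1}(\alpha_{L_0})\geq\lambda_{\kappa,n}(0,\alpha_L)$ (inequality \eqref{necessarily-1}). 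The dimensional dichotomy is then decided \emph{at the small-cap end}, not the large one: as $L\to 0$ the ratio tends to $2^{1/n}\mathfrak j_{n/2-1}/\mathfrak h_{n/2-1}$, which exceeds $1$ precisely when $n\in\{2,3\}$ (Proposition~\ref{utolso-proposition}). The spectral gap in \eqref{large-caps} is used only to confirm the comparison also holds as $L\to\pi/\sqrt\kappa$ (where the left side tends to $\sqrt{n\kappa}$ for every $n$ while the right side tends to $\sqrt{\mu_n\kappa}$ or $0$), and the intermediate range for $n=2,3$ requires a separate argument with Legendre functions. Your narrative ``gap for large caps, blow-up for small caps forces the degenerate configuration'' does not capture this, and would not by itself prove anything: both caps blow up for small $L$, and the large-cap limit is favorable in \emph{every} dimension. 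Likewise, $v_\infty<1$ in the paper comes from a short contradiction using $\mathfrak f_{\kappa,n,1}(\alpha_{L^0_{n,\kappa}})\to\sqrt{n\kappa}\to\infty$ if $L_{n,\kappa}\to\pi/\sqrt\kappa$, not from matching growth rates of $\mathfrak h_{n/2-1}$.
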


Since $v_{2}=v_{3}=0$, there are no restrictions on the size of clamped plates in 2 and 3 dimensions. However, 
our arguments work only for sufficiently large domains $\Omega\subset M$ in higher dimensions, satisfying   ${V_g(\Omega)}> v_{n}{V_g(M)}$ with $v_n>0$ for every $n\geq 4$. 
 Nevertheless, Theorem \ref{fotetel-CP-pozitiv} provides the first positive answer in any geometric setting to Lord Rayleigh's conjecture  in \textit{arbitrarily high dimensions}. We notice that $v_n\in (0,1)$, $n\geq 4$, is implicitly given as a solution to a highly nonlinear equation. However,  we have  $v_\infty=
 \displaystyle\limsup_{n\to \infty} v_{n}<1$, which shows that clamped plates in dimensions beyond 3 need not be
 particularly close -- in the sense of  volume -- to the whole manifold in order for the isoperimetric inequality \eqref{foegyenlotlenseg} to hold. Numerical tests indicate that $v_n\leq 1/2$ for every  $n\geq 4,$ and $v_\infty=1/2,$ see Table \ref{table-2}. Clearly, the most optimistic scenario would be to have $v_n=0$ for every  $n\geq 4,$ which definitely requires a new approach with respect to the one presented in our paper. 

The first part of the proof of Theorem \ref{fotetel-CP-pozitiv}  is inspired by Talenti \cite{Talenti} together with the subsequent refinements of Ashbaugh and Benguria \cite{A-B} and Nadirashvili \cite{Nad}. Indeed, since the minimizer in \eqref{variational-charact} can be sign-changing (see e.g.\ Theorem \ref{theorem-belt}), a suitable nodal-decomposition is performed that we combine with the L\'evy--Gromov isoperimetric inequality, reducing the initial problem to a coupled minimization problem involving spherical caps on the model space $\mathbb S_\kappa^n$.\  Then, fine asymptotic properties of the fundamental tone for clamped spherical caps (cf.\ Theorem \ref{aszimptotak-CP-pozitiv}) combined with further features of Gaussian hypergeometric functions provide the proof of Theorem \ref{fotetel-CP-pozitiv}. 

We conclude the paper with the limit case when $\kappa\to 0$, i.e., $(M,g)$ is a complete non-compact $n$-dimensional Riemannian manifold with  ${\sf Ric}_{(M,g)}\geq 0$. The quantity 
\begin{equation}\label{volume-ratio}
	{\sf AVR}_{(M,g)}=\lim_{r\to \infty}\frac{{ V}_g(B_x(r))}{\omega_n r^n}
\end{equation}
stands for the  {\it asymptotic volume
	ratio} of $(M,g)$; here  $B_x(r)$  is the open metric ball on $M$ with center $x\in M$ and radius $r>0,$  and $\omega_n$ is the volume of the Euclidean unit ball in $\mathbb R^n$. By the Bishop--Gromov comparison theorem one has that ${\sf AVR}_{(M,g)}\leq 1,$ and this number is independent of the choice of
$x\in M,$  thus it is a global geometric invariant of $(M,g)$; moreover,  ${\sf AVR}_{(M,g)}= 1$ if and only if $(M,g)$ is isometric to the usual Euclidean space $(\mathbb R^n,g_0).$ 

	\begin{theorem}\label{Huisken-CP} {$($\rm\textbf{Lord Rayleigh's conjecture; non-negatively curved spaces}$)$}
	Let $(M,g)$ be a complete non-compact $n$-dimensional $(n\geq 2)$ Riemannian manifold with ${\sf Ric}_{(M,g)}\geq 0$ and ${\sf AVR}_{(M,g)}>0$. Then  
	\begin{equation}\label{Huisken-egyenlotlenseg}
	\Lambda_g(\Omega)\geq {\sf AVR}_{(M,g)}^\frac{4}{n}   w_n \Lambda_{0}(\Omega^\star),
	\end{equation}
	for every smooth bounded domain $\Omega\subset M$,  where $\Omega^\star\subset \mathbb R^n$ is a ball with  $V_g(\Omega)=V_0(\Omega^\star)$ and 
	 \begin{equation}\label{w-n-limit-ertek}
	 	w_n=  \left\{ \begin{array}{lll}
	 		1 &\mbox{if} & n\in \{2,3\}, \\
	 		2^\frac{4}{n} \frac{\mathfrak  j_{\frac{n}{2}-1}^4}{\mathfrak  h_{\frac{n}{2}-1}^4}&\mbox{if} &  n\geq 4.
	 	\end{array}\right. 
	 \end{equation}	
	 If $n\in \{2,3\},$ then equality holds in \eqref{Huisken-egyenlotlenseg} for some $\Omega\subset M$ if and only if $(M,g)$ is isometric to $(\mathbb R^n,g_0)$  and $\Omega\subset M$ is isometric to the ball  $\Omega^\star\subset \mathbb R^n$. In addition, $w_\infty\coloneqq
	 \displaystyle\lim_{n\to \infty} w_{n}=1.$
\end{theorem}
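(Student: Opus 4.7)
The plan is to adapt the nodal-decomposition strategy of Theorem \ref{fotetel-CP-pozitiv} to the non-compact, non-negatively curved setting, with the L\'evy--Gromov isoperimetric inequality replaced by Brendle's sharp isoperimetric inequality on manifolds satisfying ${\sf Ric}_{(M,g)}\ge 0$ and ${\sf AVR}_{(M,g)}>0$, which asserts that every smooth bounded domain $D\subset M$ obeys $\operatorname{Per}_g(D)\ge n\omega_n^{1/n}{\sf AVR}_{(M,g)}^{1/n}V_g(D)^{(n-1)/n}$, with equality characterizing Euclidean balls in the rigidity case ${\sf AVR}_{(M,g)}=1$. The effect of replacing the sharp Euclidean constant $n\omega_n^{1/n}$ by $n\omega_n^{1/n}{\sf AVR}_{(M,g)}^{1/n}$ in a Talenti-type comparison argument is to introduce the expected prefactor ${\sf AVR}_{(M,g)}^{4/n}$ in \eqref{Huisken-egyenlotlenseg}.

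First I would fix a minimizer $u_1\in W_0^{2,2}(\Omega)$ of the Rayleigh quotient \eqref{variational-charact} and perform the Ashbaugh--Benguria--Nadirashvili--Talenti nodal decomposition $u_1=u_+-u_-$ on $\Omega=\{u_1>0\}\cup\{u_1<0\}$. On each nodal component I would carry out the Talenti rearrangement against the Euclidean ball $B_\pm\subset\mathbb R^n$ of volume ${\sf AVR}_{(M,g)}\,V_g(\{u_1\gtrless 0\})$, using Brendle's inequality in place of the classical Euclidean isoperimetric inequality inside the standard coarea/Schwarz-symmetrization machine. This produces pointwise comparisons of the Schwarz symmetrizations $u_\pm^\sharp$ with radial Euclidean solutions in which the forcing $\Delta_g u_\pm$ is rearranged with an extra multiplicative factor ${\sf AVR}_{(M,g)}^{2/n}$; squaring and integrating bounds $\Lambda_g(\Omega)$ from below by ${\sf AVR}_{(M,g)}^{4/n}$ times a purely Euclidean coupled two-ball minimization problem for the clamped-plate Rayleigh quotient on $B_+\sqcup B_-$ subject to $V_0(B_+)+V_0(B_-)=V_0(\Omega^\star)$.

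The hard part is to dispense with this two-ball infimum sharply in the correct dimensional regime: for $n\in\{2,3\}$ the sharp Ashbaugh--Benguria analysis \cite{A-B} shows that the optimum of the two-ball problem equals $\Lambda_0(\Omega^\star)$, hence $w_n=1$; for $n\ge 4$ the sharp argument fails, and I would instead invoke an Ashbaugh--Laugesen-style estimate \cite{A-L}, splitting the Rayleigh quotient radially on each Euclidean ball via Bessel and modified Bessel functions and dominating the two-ball infimum by the balanced split, which yields the explicit bound $w_n=2^{4/n}\mathfrak j_{n/2-1}^4/\mathfrak h_{n/2-1}^4\in(0,1)$. Verifying that the AVR-scaled Talenti comparison yields exactly this two-ball problem without further loss is the main technical checkpoint, since the factor ${\sf AVR}_{(M,g)}^{4/n}$ must be pulled cleanly outside the coupled infimum.

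For the rigidity claim in dimensions $n\in\{2,3\}$, equality in \eqref{Huisken-egyenlotlenseg} would force equality throughout the chain, in particular in Brendle's isoperimetric inequality applied to (level sets of) $u_\pm$, whose rigidity statement forces $(M,g)$ to be isometric to $(\mathbb R^n,g_0)$; then the equality case of the sharp Ashbaugh--Benguria inequality identifies $\Omega$ with $\Omega^\star$ up to isometry. Finally, $w_\infty=1$ follows from the standard large-order Bessel asymptotics $\mathfrak j_\nu,\mathfrak h_\nu=\nu+O(\nu^{1/3})$, giving $\mathfrak j_{n/2-1}/\mathfrak h_{n/2-1}\to 1$, combined with $2^{4/n}\to 1$ as $n\to\infty$.
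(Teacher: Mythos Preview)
Your overall strategy matches the paper's proof: Brendle's sharp isoperimetric inequality replaces L\'evy--Gromov inside a Talenti-type comparison, the ABNT nodal decomposition reduces the problem to a Euclidean coupled two-ball minimization, and then Ashbaugh--Benguria \cite{A-B} (for $n\in\{2,3\}$) and Ashbaugh--Laugesen \cite{A-L} (for $n\ge 4$) dispose of that infimum; rigidity follows from the equality case of Brendle's inequality, and $w_\infty=1$ from the Bessel asymptotics (the paper simply cites \cite{A-L} for this last point).

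There is, however, a bookkeeping slip that would derail the argument if taken literally. You propose to rearrange onto balls $B_\pm$ of volume ${\sf AVR}_{(M,g)}\,V_g(\{u_1\gtrless 0\})$, yet then impose the constraint $V_0(B_+)+V_0(B_-)=V_0(\Omega^\star)$; these are incompatible, since the first gives $V_0(B_+)+V_0(B_-)={\sf AVR}_{(M,g)}\,V_0(\Omega^\star)$. If you keep the AVR-scaled balls \emph{and} the extra prefactor ${\sf AVR}_{(M,g)}^{4/n}$, the scaling of the two-ball infimum cancels it and you end up with the too-strong bound $\Lambda_g(\Omega)\ge w_n\Lambda_0(\Omega^\star)$. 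The paper avoids this by rearranging onto Euclidean balls of the \emph{same} volumes $V_g(\Omega_\pm)$ (so the constraint $a^n+b^n=L^n$ with $\omega_nL^n=V_g(\Omega)$ holds exactly), and letting the AVR factor enter only through the weakened perimeter bound $\mathcal P_g(\Lambda_t)\ge {\sf AVR}_{(M,g)}^{1/n}\mathcal P_0(\Lambda_t^\star)$ in the coarea step. This yields the pointwise comparison ${\sf AVR}_{(M,g)}^{2/n}u_\pm^\star\le w_{a,b}$, hence ${\sf AVR}_{(M,g)}^{4/n}\int_\Omega u^2\le \int w_a^2+\int w_b^2$, while the numerator identity $\int_\Omega(\Delta_g u)^2=\int(\Delta w_a)^2+\int(\Delta w_b)^2$ carries no AVR factor at all (the forcing is \emph{not} rescaled). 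Dividing then places ${\sf AVR}_{(M,g)}^{4/n}$ cleanly in front of the two-ball Rayleigh quotient with the correct volume constraint, and the rest proceeds as you outline.
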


As in the proof of Theorem \ref{fotetel-CP-pozitiv}, the conclusion of Theorem \ref{Huisken-CP} follows from a similar 
 nodal-decomposition, combined with a recent isoperimetric inequality, valid on any complete non-compact $n$-dimensional Riemannian manifold $(M,g)$ with ${\sf Ric}_{(M,g)}\geq 0$ and ${\sf AVR}_{(M,g)}>0$, see Brendle \cite{Brendle}  and Balogh and Krist\'aly \cite{Balogh-Kristaly} which is proved by the  ABP-method and the optimal mass transport theory, respectively. We also emphasize the strong rigidity character of inequality \eqref{Huisken-egyenlotlenseg} for $n\in \{2,3\}$; indeed, if a particular domain $\Omega\subset M$ produces equality in \eqref{Huisken-egyenlotlenseg}, the whole manifold $(M,g)$ turns out to be isometric to the Euclidean space $(\mathbb R^n,g_0)$, which follows by the  characterization of the equality in the aforementioned isoperimetric inequality (see \cite{Brendle, Balogh-Kristaly});  the rest immediately follows from Ashbaugh and Benguria \cite{A-B} for $n\in \{2,3\}$, and  Ashbaugh and Laugesen \cite{A-L} for $n\geq 4$.

\textit{Structure of the paper.}  In \S \ref{section-2} we state/prove  those properties of the sphere $\mathbb S_\kappa^n$ and Gaussian hypergeometric functions that are used in the paper. In \S \ref{section-3} we discuss the sign-changing character of the first eigenfunctions on spherical belts (see Theorem \ref{theorem-belt}). In \S \ref{section-4} we perform the Ashbaugh--Benguria--Nadirashvili--Talenti nodal-decomposition on positively curved manifolds, providing a sharp estimate of  the  fundamental tone of a clamped plate by a coupled minimization expression involving spherical caps. In \S \ref{section-5} we prove  sharp spectral gaps for small and large spherical caps (see Theorem \ref{aszimptotak-CP-pozitiv}). Lord Rayleigh's conjecture on positively curved Riemannian manifolds (see Theorem \ref{fotetel-CP-pozitiv}) is proved in \S\ref{section-6}, while the limit case (see Theorem \ref{Huisken-CP})  is discussed in \S\ref{section-7}. Finally,  Appendix \ref{section-8} contains well-known properties of special functions that are collected for an easier reading of the proofs.

\section{Preliminaries}\label{section-2}

\subsection{The model space $\mathbb S_\kappa^n$} Let  $n\in \mathbb N_{\geq 2}$ and $\kappa>0$. The set $\mathbb S_\kappa^n\subset \mathbb R^{n+1}$ is the $n$-dimensional sphere with radius $1/\sqrt{\kappa}$ (i.e., with constant curvature $\kappa$), endowed with its natural Riemannian metric $g_\kappa$. Let $(\theta, \xi)$ be the spherical coordinates on  $\mathbb S_\kappa^n$ with respect to the North pole $N=(0,\ldots,0,1/\sqrt{\kappa})\in \mathbb S_\kappa^n$, where $\theta\in (0,\pi)$ represents the latitude measurement along a unit speed geodesic from $N,$ while $\xi\in \mathbb S_1^{n-1}\eqqcolon\mathbb S^{n-1}$ is a parameter representing the choice of `azimuthal' direction of the geodesic in $\mathbb S_\kappa^n$. The distance from  $x=x(\theta,\xi)\in \mathbb S_\kappa^n$ to the North pole is $d_{\kappa}(N,x)={\theta}/{\sqrt{\kappa}}\in (0,{\pi}/{\sqrt{\kappa}})$. The  set $$C_\kappa^n(R)=\{x\in \mathbb S_\kappa^n: d_\kappa(N,x)<R\}$$ denotes the $n$-dimensional spherical cap with center $N$ and radius $R\in \left(0,{\pi}/{\sqrt{\kappa}}\right)$. Its volume is 
\begin{equation}\label{terfogat-cap}
V_{\kappa}(C_\kappa^n(R))=\int_{C_\kappa^n(R)}{\rm d}v_\kappa = n\omega_n\int_0^{R}\left(\frac{\sin(\sqrt{\kappa} \rho)}{\sqrt{\kappa}}\right)^{n-1}{\rm d}\rho,
\end{equation}
where ${\rm d} v_\kappa$ is the canonical measure on $\mathbb S_\kappa^n$ and  $\omega_n$ stands for the volume of the unit $n$-dimensional Euclidean ball. Performing a change of variables and using the relation \eqref{terfogat-cap}, for  every integrable function $h:[0,L]\to \mathbb R$  with $L\in [0,V_\kappa(\mathbb S^n_\kappa)]$   we have that  
\begin{equation}\label{change-variables}
\int_0^{L}h(s){\rm d}s=\int_{C_\kappa^n(R_L)}h(V_\kappa(C_\kappa^n(d_\kappa(N,x))){\rm d} v_\kappa(x),
\end{equation}
where $R_L\geq 0$ is the unique number for which $V_\kappa(C_\kappa^n(R_L))=L$. 

The spherical Laplacian on $\mathbb S_\kappa^n$ is 
\begin{equation}\label{Laplace-operator}
\Delta_\kappa w(x)\coloneqq \Delta_{g_\kappa} w(x)=\kappa\left({\sin \theta}\right)^{1-n}\frac{\partial}{\partial \theta}\left({(\sin \theta)}^{n-1}\frac{\partial w}{\partial \theta}\right)+\frac{\kappa}{\sin^{2} \theta}\Delta_\xi w,
\end{equation}
where $\Delta_\xi$ is the Laplace--Beltrami operator on the usual  $(n - 1)$-dimensional unit sphere $\mathbb S^{n-1}$.


\subsection{Properties of Gaussian hypergeometric functions}

Let $a,b,c\in \mathbb C$ ($c\notin \mathbb Z_-$) and  $(a)_m=a(a+1)\ldots(a+m-1)=\frac{\Gamma(a+m)}{\Gamma(a)}$ be the Pochhammer symbol, $m \in \mathbb N$.
The  \textit{Gaussian hypergeometric function} is  
 \begin{equation}\label{F-ertelmezes}
{_2F}_1(a,b;c;z)= \sum_{m\geq 0}\frac{(a)_m(b)_m}{(c)_m}\frac{z^m}{m!},\ |z|<1,
 \end{equation}
and extended by analytic continuation elsewhere.
The corresponding differential equation to $z\mapsto {_2F}_1(a,b;c;z)$ is 
\begin{equation}\label{hyper-ODE}
z(1-z)w''(z)+(c-(a+b+1)z)w'(z)-abw(z)=0.
\end{equation}


\begin{proposition}\label{hipergeometrikus-Bessel-0-1}
 If $t\mapsto \lambda(t)$ is a positive function and  $\lim_{t\to 0}\lambda(t)= \ell>0,$ then for every $C_1,C_2,C\in \mathbb R$, $\mu>-1$ and $x>0$ one has that
\begin{align*}
&~	\lim_{t\to 0} {_2F}_1\left(C_1+\sqrt{C\pm\frac{\lambda^2(t)}{t}},C_2-\sqrt{C\pm\frac{\lambda^2(t)}{t}};1+\mu;\sin^2\left(\frac{\sqrt{t} x}{2}\right)\right)\\  =&~\Gamma(1+\mu)\left(\frac{2}{\ell x}\right)^\mu\left\{ \begin{array}{@~rll@~}
		J_\mu(\ell x)   &\text{for}&  \text{`}+\text{'}; \\
		I_\mu(\ell x)    &\text{for}&  \text{`}-\text{'}.
	\end{array}\right.
\end{align*}	
\end{proposition}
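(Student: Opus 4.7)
The plan is to expand ${_2F}_1$ in its defining power series \eqref{F-ertelmezes}, pass to the limit term by term, and identify the resulting series with the classical power-series expansion of $J_\mu$ (in the `$+$' case) or $I_\mu$ (in the `$-$' case). Set
$$B(t)=\sqrt{C\pm\lambda^2(t)/t},\qquad z(t)=\sin^2\!\left(\frac{\sqrt{t}\,x}{2}\right),$$
so $B(t)$ is real for `$+$' and purely imaginary (for $t$ sufficiently small) for `$-$'. For every $k\in\mathbb{N}$,
$$(C_1+k+B(t))(C_2+k-B(t))=(C_1+k)(C_2+k)+(C_2-C_1)B(t)-B(t)^2,$$
and from $\lambda(t)\to\ell$ and $\sin^2(\sqrt{t}x/2)/t\to x^2/4$ one obtains $-B(t)^2\,z(t)\to\mp\ell^2 x^2/4$, $B(t)z(t)\to 0$, and $(C_1+k)(C_2+k)z(t)\to 0$. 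Consequently, for each fixed $m\in\mathbb{N}$,
$$T_m(t)\coloneqq\frac{(C_1+B(t))_m(C_2-B(t))_m}{(1+\mu)_m\,m!}\,z(t)^m\ \longrightarrow\ \frac{(\mp\ell^2 x^2/4)^m}{(1+\mu)_m\,m!}\quad\text{as }\ t\to 0.$$

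To commute this limit with the infinite sum, I would apply the Weierstrass M-test. The elementary estimates $|(C_1+k)(C_2+k)|\,z(t)\le(|C_1|+k)(|C_2|+k)\,t_0 x^2/4$, $|B(t)|\,z(t)\lesssim\sqrt{t_0}$, and $|B(t)|^2 z(t)\le |C|\,t_0 x^2/4+\sup_{t\in(0,t_0)}\lambda(t)^2\cdot x^2/4$ yield
$$|(C_1+k+B(t))(C_2+k-B(t))|\,z(t)\le M_0+M_1 k+M_2 k^2 t_0,\qquad t\in(0,t_0),\ k\in\mathbb{N},$$
for constants $M_0,M_1,M_2$ independent of $t$. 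Hence $|T_m(t)|\le S_m\coloneqq\prod_{k=0}^{m-1}(M_0+M_1 k+M_2 k^2 t_0)/[(1+\mu+k)(k+1)]$, and since $S_{m+1}/S_m\to M_2 t_0<1$ as $m\to\infty$ (for $t_0$ small), the majorant series $\sum_m S_m$ converges. Dominated convergence then gives
$$\lim_{t\to 0}{_2F}_1(\ldots)=\sum_{m\geq 0}\frac{(\mp\ell^2 x^2/4)^m}{(1+\mu)_m\,m!}.$$

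Finally, the identity $\Gamma(m+\mu+1)=\Gamma(\mu+1)(1+\mu)_m$ applied to the classical expansions
$$J_\mu(\zeta)=\sum_{m\geq 0}\frac{(-1)^m(\zeta/2)^{2m+\mu}}{m!\,\Gamma(m+\mu+1)},\qquad I_\mu(\zeta)=\sum_{m\geq 0}\frac{(\zeta/2)^{2m+\mu}}{m!\,\Gamma(m+\mu+1)}$$
recasts the above limit as $\Gamma(1+\mu)\,(2/(\ell x))^\mu J_\mu(\ell x)$ for `$+$' and $\Gamma(1+\mu)\,(2/(\ell x))^\mu I_\mu(\ell x)$ for `$-$', which is the desired conclusion. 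The main technical point is the uniform majorant $S_m$: it requires care precisely because $B(t)^2$ diverges while $z(t)$ vanishes at just the right rate so that $-B(t)^2 z(t)$ stabilises at $\mp\ell^2 x^2/4$, while the other contributions (linear in $B(t)$, or polynomial in $k$) either vanish with $t$ or are dominated by the Pochhammer denominator $(1+\mu+k)(k+1)\sim k^2$.
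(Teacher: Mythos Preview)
Your proof is correct and follows essentially the same route as the paper: expand ${_2F}_1$ in its defining series, compute the term-by-term limit, and identify the result with the standard power series of $J_\mu$ or $I_\mu$. The paper organises the term-wise limit slightly differently (it computes $\lim_{t\to 0} t^{m/2}(C_1+B(t))_m$ and $\lim_{t\to 0} t^{m/2}(C_2-B(t))_m$ separately rather than pairing factors), and it simply asserts the interchange of limit and summation without the Weierstrass $M$-test argument you supply; your version is therefore somewhat more complete on that point.
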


\begin{proof}
By definition, for every $m\in \mathbb N$ we have that
$$\lim_{t\to 0} t^\frac{m}{2}\left(C_1+\sqrt{C\pm\frac{\lambda^2(t)}{t}}\right)_m=(\pm 1)^\frac{m}{2}\ell ^m$$ and $$ \lim_{t\to 0} t^\frac{m}{2}\left(C_2-\sqrt{C\pm\frac{\lambda^2(t)}{t}}\right)_m=(-1)^m(\pm 1)^\frac{m}{2}\ell ^m.$$
Therefore, 
\begin{align*}
	A_{\lambda,\mu}^\pm(x)&\coloneqq \lim_{t\to 0}{_2F}_1\left(C_1+\sqrt{C\pm\frac{\lambda^2(t)}{t}},C_2-\sqrt{C\pm\frac{\lambda^2(t)}{t}};1+\mu;\sin^2\left(\frac{\sqrt{t} x}{2}\right)\right)\\
	&=\lim_{t\to 0}\sum_{m=0}^\infty\frac{\left(C_1+\sqrt{C\pm\frac{\lambda^2(t)}{t}}\right)_m\left(C_2-\sqrt{C\pm\frac{\lambda^2(t)}{t}}\right)_m}{(1+\mu)_m}\frac{\sin^{2m}\left(\frac{\sqrt{t} x}{2}\right)}{m!}\\
	&=\Gamma(1+\mu)\sum_{m=0}^\infty\frac{(-1)^m(\pm 1)^m\ell^{2m}}{\Gamma(1+m+\mu) m!}\left(\frac{x}{2}\right)^{2m}
	\\&=\Gamma(1+\mu)\left(\frac{2}{\ell x}\right)^\mu\left\{ \begin{array}{rll}
		J_\mu(\ell x) &{\rm for}& \text{`}+\text{'}; \\
		I_\mu(\ell x)  &{\rm for}& \text{`}-\text{'},
	\end{array}\right.\end{align*}
which concludes the proof. 
\end{proof}

For every  $\mu\geq 0$ and $n\geq 2$, we consider the number
\begin{equation}\label{Lambda-elso-definicio}
	\Lambda_\pm(\mu)\coloneqq \sqrt{\frac{(n-1)^2}{4}\pm \mu}\in \mathbb C,
\end{equation}
and  the specific  Gaussian hypergeometric function  \begin{equation}\label{Ferrers-fct0}
v_\pm(\mu,t)\coloneqq {_2F}_1\left(\frac{1}{2}-\Lambda_\pm(\mu),\frac{1}{2}+\Lambda_\pm(\mu);\frac{n}{2};t\right)=\sum_{m=0}^\infty \beta_m^\pm(\mu) t^m,\ \ t\in (0,1),
\end{equation}
where \begin{equation}\label{beta-sorozat}
	\beta_m^\pm(\mu)\coloneqq\frac{(\frac{1}{2}-\Lambda_{\pm}(\mu))_m(\frac{1}{2}+\Lambda_{\pm}(\mu))_m}{m!(\frac{n}{2})_m},\ m\in \mathbb N.
\end{equation}

	We now collect those properties of the Gaussian hypergeometric functions that are important in our further investigations,  most of which coming by  well-known properties listed in Olver,  Lozier,  Boisvert and Clark  \cite{Digital} and recalled in  Appendix \ref{section-8}. 

\begin{proposition}\label{Ferrers-basic-lemma-1} If  $n\in \mathbb N_{\geq 2}$, the following properties hold$:$ 
\begin{itemize}
	\item[(i)] $v_\pm(0,t)=(1-t)^{\frac{n}{2}-1}$ for every $t\in (0,1);$  
	\item[(ii)] $v_-(\mu,t)>0$  for every $\mu>0$ and $t\in (0,1);$  
	\item[(iii)] for every $t\in (0,1)$ the function $\mu\mapsto v_+(\mu,t)$ has infinitely many zeros in $[0,\infty);$ 
	\item[(iv)] for every $\mu>0$ the number of zeros of the mapping $t\mapsto v_+(\mu,t)$ in  $(0,1)$ is given by the {\rm Klein-number} $s_n^\mu\coloneqq \left\lfloor\Lambda_+(\mu) -\frac{n-3}{2}\right\rfloor+k_n^\mu \geq 1,$  where $\lfloor a\rfloor$ stands for the 
	greatest integer less than $a>0$ and $k_n^\mu\in \{0,1\}$ $($additionally, $k_n^\mu=0$ for every $n\geq 3$ and $\mu>0$$);$
\item[(v)] for every $\mu>0$ with $\Lambda_{\pm}(\mu)-\frac{1}{2}\notin \mathbb Z$,	the function 	$t\mapsto \frac{w_\pm(\mu,t)}{v_\pm(\mu,t)}$ is decreasing between any two consecutive zeros of $v_\pm(\mu,\cdot)$, where  $$w_\pm(\mu,t)=\sum_{m=0}^\infty \beta_m^\pm(\mu) \frac{\Psi(m+\frac{1}{2}+\Lambda_{\pm}(\mu))-\Psi(m+\frac{1}{2}-\Lambda_{\pm}(\mu))}{\Lambda_{\pm}(\mu)} t^m,\ \ t\in (0,1),$$ 
$($with the convention that a limit is taken in $w_-(\mu,t)$ whenever $\Lambda_-(\mu)=0$$);$
\item[(vi)] $v_+\left(\mu,\frac{1}{2}\right)>0=v_+\left(n,\frac{1}{2}\right)$ for every $\mu\in [0,n)$.
\end{itemize} 

\end{proposition}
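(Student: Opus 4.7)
The plan is to dispatch each of the six parts using a classical identity or theorem for the Gaussian hypergeometric function, combined with Wronskian and uniqueness arguments. The easiest, (i) and (vi), rest on Euler's reduction and Kummer's half-integer formula respectively; (ii) is handled by Euler's integral in one range of $\mu$ and by a termwise positive series in the complementary range; (iii)--(iv) are zero-counting statements following Klein's classical theorem; (v) is a Wronskian/second-solution exercise.

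For (i), substituting $\mu = 0$ gives $\Lambda_\pm(0) = (n-1)/2$, so $v_\pm(0, t) = {}_2F_1(1 - n/2, n/2; n/2; t) = (1-t)^{n/2 - 1}$ by Euler's reduction ${}_2F_1(a, c; c; t) = (1-t)^{-a}$. For (vi), since $v_+(\mu, t) = {}_2F_1(a, 1-a; n/2; t)$ with $a = 1/2 - \Lambda_+(\mu)$, Kummer's half-integer formula yields
\[
v_+\!\left(\mu, \tfrac{1}{2}\right) = \frac{\Gamma(n/4)\,\Gamma((n+2)/4)}{\Gamma\!\left(\tfrac{n+1}{4} - \tfrac{\Lambda_+(\mu)}{2}\right)\Gamma\!\left(\tfrac{n+1}{4} + \tfrac{\Lambda_+(\mu)}{2}\right)}.
\]
As $\mu$ ranges over $[0, n)$, $\Lambda_+(\mu) \in [(n-1)/2, (n+1)/2)$, making the first denominator argument traverse $(0, 1/2]$: all four Gamma values are finite and positive. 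At $\mu = n$, $\Lambda_+(n) = (n+1)/2$ makes that argument vanish, so $\Gamma(0) = \infty$ forces $v_+(n, 1/2) = 0$.

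For (ii), I split at $\mu = (n-1)^2/4$. When $\mu > (n-1)^2/4$, write $\Lambda_- = i\tilde\Lambda$ with $\tilde\Lambda > 0$; the conjugate pair of Pochhammer factors gives $\beta_m^-(\mu) = \lvert(1/2 + i\tilde\Lambda)_m\rvert^2/[m!\,(n/2)_m] > 0$ for every $m$, so the series for $v_-(\mu, \cdot)$ is termwise strictly positive. When $0 < \mu \leq (n-1)^2/4$, $\Lambda_- \in [0, (n-1)/2)$ and the Euler integral representation
\[
v_-(\mu, t) = \frac{\Gamma(n/2)}{\Gamma\!\left(\tfrac{1}{2} + \Lambda_-\right)\Gamma\!\left(\tfrac{n-1}{2} - \Lambda_-\right)} \int_0^1 s^{\Lambda_- - 1/2}(1-s)^{(n-3)/2 - \Lambda_-}(1-ts)^{\Lambda_- - 1/2}\,{\rm d}s,
\]
valid since $n/2 > 1/2 + \Lambda_- > 0$, has all Gamma prefactors and integrand factors strictly positive on $t \in (0,1)$.

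For (iv) I cite Klein's zero-counting theorem for ${}_2F_1$ on $(0, 1)$ (recorded in Appendix \ref{section-8}): with $a = 1/2 - \Lambda_+$, $b = 1/2 + \Lambda_+$, $c = n/2$ it delivers $s_n^\mu = \lfloor \Lambda_+(\mu) - (n-3)/2 \rfloor + k_n^\mu$; the correction $k_n^\mu \in \{0, 1\}$ arises only in dimension $n = 2$, where $c - a - b = 0$ and the Gauss sum at $t = 1$ fails, while for $n \geq 3$ Gauss's formula evaluates $v_+(\mu, 1^-)$ explicitly and fixes $k_n^\mu = 0$. Part (iii) then follows from (iv) and continuous dependence of ODE zeros on parameters: as $\mu \to \infty$, $s_n^\mu \to \infty$, and each $k$-th zero $t_k(\mu)$ traces a continuous path from its entry point at $t \to 1^-$ toward $t \to 0^+$ (the asymptotic location near $0$ being forced by the Bessel-$J_{n/2 - 1}$ scaling of Proposition \ref{hipergeometrikus-Bessel-0-1}), so $t_k(\mu) = t_0$ for at least one $\mu$ per $k$, producing infinitely many zeros of $\mu \mapsto v_+(\mu, t_0)$. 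For (v), the hypergeometric ODE forces the Wronskian to satisfy $t(1-t) W' + (n/2 - 2t) W = 0$, so $W(v_\pm, w_\pm)(t) = C_\pm\, t^{-n/2}(1-t)^{n/2 - 2}$ has constant sign on $(0, 1)$; $w_\pm$ is a bona fide second solution because it arises (up to the $1/\Lambda_\pm$ normalization) as the $\partial/\partial\Lambda_\pm$ derivative of the family $v_\pm$, linearly independent exactly when the Frobenius exponents $\tfrac{1}{2} \pm \Lambda_\pm$ do not differ by an integer, i.e., $\Lambda_\pm - 1/2 \notin \mathbb{Z}$; small-$t$ asymptotics pin the sign of $C_\pm$, and then $(w_\pm/v_\pm)' = -W/v_\pm^2$ is strictly negative between consecutive zeros of $v_\pm$. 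The main obstacle is the precise Klein count in (iv), especially the $n = 2$ correction $k_n^\mu$, which requires a separate endpoint analysis combining Gauss's evaluation at $t = 1$ with the dimension-two logarithmic exception; the secondary obstacle is identifying $w_\pm$ as a genuine second solution in (v) at half-integer Frobenius exponents, which demands careful handling of the Digamma-valued coefficients.
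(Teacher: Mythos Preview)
Your treatment of (i), (ii), (iv) and (vi) is essentially correct and close in spirit to the paper's (the paper uses the Euler--Pfaff transformation in (ii) where you use the Euler integral, but both arguments work). Your route to (iii) via (iv) and continuous motion of zeros is different from the paper's direct citation of MacDonald, but the idea is sound.

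The real gap is in (v). Your Wronskian argument requires that $w_\pm(\mu,\cdot)$ be a solution of the \emph{same} hypergeometric equation
\[
t(1-t)y''+\Bigl(\tfrac{n}{2}-2t\Bigr)y'-\Bigl(\tfrac14-\Lambda_\pm^2\Bigr)y=0
\]
as $v_\pm(\mu,\cdot)$, but it is not. The coefficient $ab=\tfrac14-\Lambda_\pm^2$ depends on $\Lambda_\pm$, so differentiating the equation with respect to $\Lambda_\pm$ introduces a source term $2\Lambda_\pm v_\pm$; a short computation (using $\Lambda_\pm w_\pm=\partial_{\Lambda_\pm}v_\pm+[\Psi(\tfrac12+\Lambda_\pm)-\Psi(\tfrac12-\Lambda_\pm)]v_\pm$) gives
\[
t(1-t)w_\pm''+\Bigl(\tfrac{n}{2}-2t\Bigr)w_\pm'-\Bigl(\tfrac14-\Lambda_\pm^2\Bigr)w_\pm=-2\,v_\pm,
\]
so the Wronskian $W=v_\pm w_\pm'-w_\pm v_\pm'$ obeys $t(1-t)W'+(\tfrac{n}{2}-2t)W=-2v_\pm^2$, not the homogeneous equation. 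Hence $W$ is \emph{not} a constant multiple of $t^{-n/2}(1-t)^{n/2-2}$ and your sign argument for $(w_\pm/v_\pm)'$ collapses. The paper instead analyses the coefficient ratios $\alpha_m^\pm/\beta_m^\pm$ directly: it shows this sequence is eventually monotone (via the Digamma recursion) and then invokes the power-series monotonicity criterion of Biernacki--Krzyz / Yang--Chua--Wang for the `$-$' case and an adaptation of Holtz--Tyaglov for the `$+$' case, handling the finitely many sign changes of $\beta_m^+$ between consecutive zeros of $v_+$. That combinatorial/series approach is what you are missing for (v).
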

\begin{proof} (i) Using the Euler--Pfaff transformation \eqref{Olver-1},  we have that
	$$v_\pm(0,t)={_2F}_1\left(1-\frac{n}{2},\frac{n}{2};\frac{n}{2};t\right)=(1-t)^{\frac{n}{2}-1}{_2F}_1\left(1-\frac{n}{2},0;\frac{n}{2};\frac{t}{t-1}\right)=(1-t)^{\frac{n}{2}-1},~ \forall  t\in (0,1).$$

(ii) Assume that $\mu>0$ and let $A\coloneqq \frac{1}{2}-\Lambda_-(\mu)\in \mathbb C$ and 	$B\coloneqq \frac{1}{2}+\Lambda_-(\mu)\in \mathbb C.$ 
First, if $\mu>\frac{n(n-2)}{4}$, then it follows that $(A)_m(B)_m>0$ for every $m\in \mathbb N.$ Therefore, by the definition \eqref{F-ertelmezes} we obtain that $v_-(\mu,t)>0$ for every $t\in (0,1).$ Now, if $0<\mu\leq \frac{n(n-2)}{4}$, it turns out by \eqref{Olver-1} that  
\begin{align*}
v_-(\mu,t)=& ~ (1-t)^{\frac{n}{2}-A-B}{_2F}_1\left(\frac{n}{2}-A,\frac{n}{2}-B;\frac{n}{2};t\right)\\=& ~ (1-t)^{\frac{n}{2}-1}{_2F}_1\left(\frac{n-1}{2}+\Lambda_-(\mu),\frac{n-1}{2}-\Lambda_-(\mu);\frac{n}{2};t\right),~ \forall  t\in (0,1).
\end{align*}
Since $0<\mu\leq \frac{n(n-2)}{4}$, every parameter in the latter expression is real and positive,  implying again that $v_-(\mu,t)>0$ by means of \eqref{F-ertelmezes}.

	(iii) By formula \eqref{P-hypergoemetric}, the property that $\mu\mapsto v_+(\mu,t)$ has infinitely many zeros in $[0,\infty)$  for every fixed $t\in (0,1)$, is a consequence of the result of  MacDonald \cite{MacDonald}; see also Hobson \cite[p.\ 403--406]{Hobson} and Baginski \cite{Baginski}. 
	
	(iv) This property is attributed to Klein \cite{Klein} for $n\geq 3$ and Gormley \cite[p.\ 30]{Gormley} for $n=2$.

	(v)  Let $\mu>0$ with $\Lambda_{\pm}(\mu)-\frac{1}{2}\notin \mathbb Z$.  Due to the explicit forms of $w_\pm$ and $v_\pm$, and basic properties of the Digamma function $\Psi=(\ln \Gamma)'$, the monotonicity of 	$t\mapsto \frac{w_\pm(\mu,t)}{v_\pm(\mu,t)}$ follows by  direct computations that we illustrate in certain cases; similar arguments are provided by Yang,  Chua and Wang \cite{kinaiak} and Holtz and Tyaglov \cite{HT}. 
	
	Whenever $\Lambda_{\pm}(\mu)\neq 0,$ we will also use the notation
	 $$\alpha_m^\pm(\mu)\coloneqq \beta_m^\pm(\mu) \frac{\Psi(m+\frac{1}{2}+\Lambda_{\pm}(\mu))-\Psi(m+\frac{1}{2}-\Lambda_{\pm}(\mu))}{\Lambda_{\pm}(\mu)},\ m\in \mathbb N,$$
	whenever  $\Lambda_{\pm}(\mu)\neq 0$.
On account of relations \eqref{digamma-prop-11}--\eqref{digamma-prop-12},  if $m_\mu^\pm\coloneqq \left[\Re \Lambda_{\pm}(\mu)+\frac{1}{2}\right]\in \mathbb N$ (where $[a]$ is the integer part of $a\in \mathbb R$), then $\left(\frac{\alpha_m^\pm(\mu)}{\beta_m^\pm(\mu)}\right)_{m=\overline{0,m_\mu^\pm}}$ is increasing and $\left(\frac{\alpha_m^\pm(\mu)}{\beta_m^\pm(\mu)}\right)_{m\geq {m_\mu^\pm}}$ is decreasing, respectively.

For the case $\text{`}-\text{'}$, by (ii) we know that $v_-(\mu,\cdot)>0$	on $(0,1)$.  Let us consider 
	$\mu>\frac{1}{4}{{n(n-2)}}$ (for every $n\geq 2$); thus $m_\mu^-=0$ and $\beta_m^-(\mu)>0$ for every $m\geq 1.$ Moreover, by the latter property it follows that  $\left(\frac{\alpha_m^-(\mu)}{\beta_m^-(\mu)}\right)_{m\geq 0}$ is positive and decreasing. A direct calculation or  the monotonicity result of Yang,  Chua and Wang \cite{kinaiak} (see also   Biernacki and  Krzyz \cite{BK}) implies that the function $(0,1)\ni t\mapsto \frac{w_-(\mu,t)}{v_-(\mu,t)}$ is also  decreasing. Analogously, when $\frac{1}{4}{{n(n-2)}}\geq \mu>\frac{1}{4}{{(n+2)(n-4)}}$
	(for every $n\geq 4$), it follows that $m_\mu^-=1$, $\beta_0^-(\mu)=1$ and $\beta_m^-(\mu)<0$ for every $m\geq 1$, while $\alpha_m^-(\mu)<0$ for every $m\geq 0$ (see \eqref{digamma-prop-11}--\eqref{digamma-prop-12}), and  the sequence $\left(\frac{\alpha_m^-(\mu)}{\beta_m^-(\mu)}\right)_{m\geq 1}$ is  decreasing. Therefore, $v_-'(\mu,\cdot)<0$ and $v_-(\mu,\cdot)>0$ (due to (ii)), and in a similar way as above,  the function $\frac{w'_-(\mu,\cdot)}{v_-(\mu,\cdot)}$ is  decreasing  on $(0,1)$. If $H_\mu(t)\coloneqq \frac{w'_-(\mu,t)}{v'_-(\mu,t)}v_-(\mu,t)-w_-(\mu,t)=\left(\frac{w_-(\mu,t)}{v_-(\mu,t)}\right)'\frac{v_-^2(\mu,t)}{v'_-(\mu,t)}$, then it turns out that  $H_\mu'(t)=\left(\frac{w'_-(\mu,t)}{v'_-(\mu,t)}\right)'v_-(\mu,t)< 0$ for every $t\in (0,1)$, i.e., $H_\mu$ is decreasing on $(0,1)$. Thus,  one has $H_\mu(t)>\lim_{v\nearrow 1}H_\mu(v)=\frac{w'_-(\mu,1)}{v'_-(\mu,1)}v_-(\mu,1)-w_-(\mu,1)>0$ for every $t\in (0,1)$, which implies that $\left(\frac{w_-(\mu,\cdot)}{v_-(\mu,\cdot)}\right)'<0$ on $(0,1)$, concluding the proof. Generically, if $k\geq 1$ and  $\frac{1}{4}{{(n+2k-2)(n-2k)}}\geq \mu>\frac{1}{4}{{(n+2k)(n-2k-2)}}$
	(for every $n\geq 2k+2$), then $m_\mu^-=k$
	and   a  similar argument as above implies  that $t\mapsto \frac{w_-(\mu,t)}{v_-(\mu,t)}$ is decreasing on $(0,1)$. When $\Lambda_-(\mu)= 0$, we consider the limit in $w_-$, replacing the expression
	$\frac{\Psi(m+\frac{1}{2}+\Lambda_-(\mu))-\Psi(m+\frac{1}{2}-\Lambda_-(\mu))}{\Lambda_-(\mu)}$ by $ 2\Psi\left(1,m+\frac{1}{2}\right)$ for all $ m\in \mathbb N,$
	where $\Psi(1, x) \coloneqq \frac{\rm d}{{\rm d}x}\Psi(x)$; thus $\alpha_m^-(\mu)=2\beta_m^-(\mu)\Psi\left(1,m+\frac{1}{2}\right)$ and the rest is similar as above. 
	
For the case $\text{`}+\text{'}$, let $t_1^\mu <\ldots<t_{s_n^\mu}^\mu$ be the zeros of $v_+(\mu,\cdot)$ in $(0,1)$, 
 where  $s_n^\mu\geq 1$ is the Klein-number  by (iii). 
 A similar procedure as before, or 
 by adapting the argument from Holtz and Tyaglov \cite
  {HT} to our setting shows that $w_+'(\mu,\cdot)v_+(\mu,\cdot)-w_+(\mu,\cdot)v'_+(\mu,\cdot)<0$ on $(0,1)\setminus\{t_1^\mu,\ldots,t_{s_n^\mu}^\mu\}$, concluding the claim.  
 


(vi) By \eqref{Olver-1} and \eqref{F-ertelmezes}, one has    that
$$
	v_+\left(n,\frac{1}{2}\right)={_2F}_1\left(-\frac{n}{2},\frac{n}{2}+1;\frac{n}{2};\frac{1}{2}\right)=\frac{1}{2^{\frac{n}{2}-1}}{_2F}_1\left(n,-1;\frac{n}{2};\frac{1}{2}\right)=0.
$$
Moreover, if $0\leq\mu<n$, then $\Lambda_+(\mu)<\frac{n+1}{2}$, thus by \eqref{fel-terfogathoz} it follows that
$$v_+\left(\mu,\frac{1}{2}\right)=\frac{2^{1-\frac{n}{2}}\sqrt{\pi}\Gamma(\frac{n}{2})}{\Gamma\left(\frac{n+1}{4}+\frac{\Lambda_+(\mu)}{2}\right)\Gamma\left(\frac{n+1}{4}-\frac{\Lambda_+(\mu)}{2}\right)}>0,$$
which concludes the proof. \end{proof}

\section{Clamped spherical belts: proof of Theorem \ref{theorem-belt}}\label{section-3}

 Let $R>r>0$, $\lambda>0$ and fix $\kappa\in \left(0,(\pi/ R)^2\right)$. Particularizing  \eqref{CP-problem-0}, we consider the clamped plate problem on the spherical belt $B_\kappa(r,R)\subset \mathbb S_\kappa^2$, i.e., 
 \tagequation{CP-problem}
 {\left\{
 	\begin{array}
 		[c]{@{~}r@{~}l@{~}lll@{~}}
 		\Delta_{\kappa}^2 w & = & \lambda^4 w & \text{in} & B_\kappa(r,R),\\
 		w&= & \dfrac{\partial w}{\partial\mathbf{n}}  =  0 & \text{on} &\partial B_\kappa(r,R).
 	\end{array}
 	\right.}
 {P_{\kappa}}
Here, for any  $x=x\left(\theta,\xi\right)\in \mathbb S_\kappa^2$,  
the spherical Laplacian on $\mathbb S_\kappa^2$ from \eqref{Laplace-operator} reduces to 
\begin{equation}\label{Laplace-operator-1}
\Delta_\kappa w(x)=\frac{\kappa}{\sin \theta}\frac{\partial}{\partial \theta}\left({\sin \theta}\frac{\partial w}{\partial \theta}\right)+\frac{\kappa}{\sin^2 \theta}\frac{\partial^2w}{\partial\xi^2}.
\end{equation}

%
%
%



As the following subsections show. the proof of Theorem \ref{theorem-belt} is divided into three parts.  

\subsection{Narrow spherical belts:\ eigenfunctions of fixed sign}
We first observe that, if  $w:B_\kappa(r,R)\to \mathbb R$ is an eigenfunction of \eqref{CP-problem},  the same is true for its $\xi$-average  $$\widetilde w(x)=\frac{1}{2\pi}\int_0^{2\pi}w(x(\theta,\xi)){\rm d}\xi.$$
We also notice that $\widetilde w$ is azimuthally-invariant (or, spherical cap symmetric), i.e., $\widetilde w(x(\theta,\cdot))$ is constant for every fixed $\theta\in (\sqrt{\kappa} r,\sqrt{\kappa} R).$ In particular, by using 
\eqref{Laplace-operator}, $\widetilde w$ is an eigenfunction of the ordinary differential equation  
\begin{equation}\label{radialis-megoldas}
\left\{ \begin{array}{lll}
\dfrac{\kappa^2}{\sin \theta}\dfrac{{\rm d}}{{\rm d} \theta}\left(\sin \theta\dfrac{{\rm d} }{{\rm d} \theta}\left(\dfrac{1}{\sin\theta}\dfrac{{\rm d}}{{\rm d} \theta}\left(\sin \theta\dfrac{{\rm d} \widetilde w}{{\rm d} \theta}\right)\right)\right)=\lambda^4 \widetilde w, & &  \theta\in (\sqrt{\kappa} r,\sqrt{\kappa} R), \\
\\
\widetilde w(\sqrt{\kappa} r)=\dfrac{{\rm d} \widetilde w}{{\rm d} \theta}(\sqrt{\kappa} r)=\widetilde w(\sqrt{\kappa} R)=\dfrac{{\rm d} \widetilde w}{{\rm d} \theta}(\sqrt{\kappa} R)=0;
\end{array}\right.
\end{equation}
moreover $\widetilde{w}$ is of fixed sign  whenever it is not identically zero, see, e.g.\ Leighton and Nehari \cite{LN-TAMS}. 
 
For further use, by applying \eqref{Lambda-elso-definicio} for $n=2,$ we introduce the numbers  
\begin{equation}\label{lambda-Gamma}
\gamma^\pm_\lambda(\kappa)\coloneqq \Lambda_\pm\left(\frac{\lambda^2}{\kappa}\right)=\sqrt{\frac{1}{4}\pm\frac{\lambda^2}{\kappa}}\in \mathbb C,
\end{equation}
while for  $z\in \mathbb C$ and $ \theta\in (\sqrt{\kappa} r,\sqrt{\kappa} R)$ we also define the Gaussian hypergeometric function $$\mathcal P(z,\theta)\coloneqq {_2F}_1\left(\frac{1}{2}+z,\frac{1}{2}-z;1;\sin^2\left(\frac{ \theta}{2}\right)\right) $$  
as well as
\begin{align*}
	\mathcal Q(z,\theta)\coloneqq&\sum_{m=0}^\infty\frac{\left(\frac{1}{2}+z\right)_m\left(\frac{1}{2}-z\right)_m}{(m!)^2}\sin^{2m}\left(\frac{ \theta}{2}\right)  \left( \Psi\left(\frac{1}{2}+z+m\right)+\Psi\left(\frac{1}{2}-z+m\right)-2\Psi\left(1+m\right)\right) \\& +\mathcal P(z,\theta)\ln\left(\sin^2\left(\frac{ \theta}{2}\right)\right),
\end{align*}
 whenever $\frac{1}{2}\pm z\neq 0,-1,-2,\dots$, see Olver, Lozier,  Boisvert and Clark \cite[rel.\ (15.10.8)]{Digital}.

By the factorization $(\Delta_{\kappa} w-\lambda^2 w)(\Delta_{\kappa} w+\lambda^2 w)=\Delta_{\kappa}^2 w-\lambda^4 w$ and \eqref{Laplace-operator-1}, we observe that  
the azimuthally-invariant function
\begin{equation}\label{w-eigenfunction}
w(x)=C_1\mathcal P(\gamma^+_\lambda(\kappa),\theta)  + C_2\mathcal Q(\gamma^+_\lambda(\kappa),\theta) +C_3\mathcal P(\gamma^-_\lambda(\kappa),\theta)  +C_4\mathcal Q(\gamma^-_\lambda(\kappa),\theta)
\end{equation}
verifies  the first equation of \eqref{radialis-megoldas}, where $x=x\left(\theta,\xi\right)\in B_\kappa(r,R)$ and the constants
 $\left\{C_i\right\}_{i=1}^{4}\subset   \mathbb R$  are not all zero; hereafter we consider the general case $\frac{1}{2}\pm \gamma^\pm_\lambda(\kappa)\neq 0,-1,-2,\dots$, as the complementary cases are obtained by  limits, see \cite[rel.\ (15.10.9)-(15.10.10))]{Digital}. 
%
  If $w$ satisfies  the boundary conditions in \eqref{radialis-megoldas}, then $w$ is of fixed sign (see \cite{LN-TAMS}) and we obtain four equations in $\left\{C_i\right\}_{i=1}^{4}$. Since some of these constants are  non-zero, we necessarily have that  
\begin{equation}\label{determinant-sign-preserving}
{\rm det}\left[\def\arraystretch{1.3} \begin{matrix}
\mathcal P(\gamma^+_\lambda(\kappa),\sqrt{\kappa} r) & \mathcal Q(\gamma^+_\lambda(\kappa),\sqrt{\kappa} r) &\mathcal P(\gamma^-_\lambda(\kappa),\sqrt{\kappa} r) & \mathcal Q(\gamma^-_\lambda(\kappa),\sqrt{\kappa} r)\\
\mathcal P'(\gamma^+_\lambda(\kappa),\sqrt{\kappa} r) & \mathcal Q'(\gamma^+_\lambda(\kappa),\sqrt{\kappa} r) &\mathcal P'(\gamma^-_\lambda(\kappa),\sqrt{\kappa} r) & \mathcal Q'(\gamma^-_\lambda(\kappa),\sqrt{\kappa} r)\\
\mathcal P(\gamma^+_\lambda(\kappa),\sqrt{\kappa} R) & \mathcal Q(\gamma^+_\lambda(\kappa),\sqrt{\kappa} R) &\mathcal P(\gamma^-_\lambda(\kappa),\sqrt{\kappa} R) & \mathcal Q(\gamma^-_\lambda(\kappa),\sqrt{\kappa} R)\\
\mathcal P'(\gamma^+_\lambda(\kappa),\sqrt{\kappa} R) & \mathcal Q'(\gamma^+_\lambda(\kappa),\sqrt{\kappa} R) &\mathcal P'(\gamma^-_\lambda(\kappa),\sqrt{\kappa} R) & \mathcal Q'(\gamma^-_\lambda(\kappa),\sqrt{\kappa} R)
\end{matrix}\right]=0,
\def\arraystretch{1}
\end{equation}
where $P'(z,\theta)\coloneqq\frac{\partial}{\partial \theta}P(z,\theta)$ and $Q'(z,\theta)\coloneqq\frac{\partial}{\partial \theta}Q(z,\theta).$

Let $\lambda\eqqcolon\lambda^{SP}_{r,R}(\kappa)>0$ in \eqref{lambda-Gamma} be the 
 smallest positive zero  of the equation  \eqref{determinant-sign-preserving}; it follows that any eigenvalue  that corresponds to an eigenfunction of fixed sign of \eqref{CP-problem} cannot be less than $\lambda^{SP}_{r,R}(\kappa)$. By analyticity, the function $\kappa\mapsto \lambda^{SP}_{r,R}(\kappa)$ is continuous and let 
 \begin{equation}\label{Lambda-0}
  \lambda_0\coloneqq \lambda^{SP}_{r,R}=\lim_{\kappa\to 0}\lambda^{SP}_{r,R}(\kappa).
 \end{equation} 

We  show that, for every $x>0$ one has the limit
 \begin{equation}\label{P-0-limit-0}
 \lim_{\kappa\to 0}{\mathcal  P}\big(\gamma^\pm_{\lambda^{SP}_{r,R}(\kappa)}(\kappa), \sqrt{\kappa} x\big) =\left\{ \begin{array}{rll}
 J_0(\lambda_0 x) &{\rm for}& \text{`}+\text{'}; \\
 I_0(\lambda_0 x)  &{\rm for}& \text{`}-\text{'},\end{array}\right.
 \end{equation}
 and 
  \begin{equation}\label{P-0-limit-1}
 \lim_{\kappa\to 0}{\mathcal  Q}(\gamma^\pm_{\lambda^{SP}_{r,R}(\kappa)}(\kappa), \sqrt{\kappa} x)=\left\{ \begin{array}{rll}
 \pi Y_0(\lambda_0 x)
 &{\rm for}& \text{`}+\text{'}; \\
-2 K_0(\lambda_0 x)
&{\rm for}& \text{`}-\text{'},\end{array}\right.
 \end{equation}
where $Y_\nu$ and $K_\nu$ are the Bessel and modified Bessel functions of the second kind  $(\nu\geq 0)$, respectively, see  Appendix \ref{section-8}.  We first observe that $\lambda_0>0;$ otherwise the terms involving the function $\mathcal Q$ blow up  whenever $\kappa\to 0,$ by reaching the branch point $0$ of both $Y_0$ and $K_0$. 

Relation \eqref{P-0-limit-0} immediately follows by Proposition \ref{hipergeometrikus-Bessel-0-1} (with $\mu=0$). To prove \eqref{P-0-limit-1}, by  Proposition \ref{hipergeometrikus-Bessel-0-1} (with $\mu=0$) and relations \eqref{Digamma-converges} and \eqref{Y_n-forma} (with $n=0$) we obtain that 
$$ \lim_{\kappa\to 0}{\mathcal  Q}(\gamma^+_{\lambda^{SP}_{r,R}(\kappa)}(\kappa), \sqrt{\kappa} x)=
 \pi Y_0(\lambda_0 x).
 $$
 Furthermore, by using the fact that $\ln \mathfrak{i}=\frac{\pi}{2}\mathfrak{i}$ (where $\mathfrak{i}=\sqrt{-1}$) and relations \eqref{Bessel-I}, \eqref{Y-I-K} and \eqref{Y_n-forma}, we also have that
 \begin{align}\label{P-0-limit-3}
 \nonumber \lim_{\kappa\to 0}{\mathcal  Q}(\gamma^-_{\lambda^{SP}_{r,R}(\kappa)}(\kappa), \sqrt{\kappa} x)&= 2
 I_0(\lambda_0 x)\ln\left(\frac{\lambda_0 x}{2}\right)-2\sum_{m=0}^\infty\frac{1}{(m!)^2}\left(\frac{\lambda_0 x}{2}\right)^{2m}\Psi(1+m)\\&=-2
 K_0(\lambda_0 x),\nonumber
 \end{align}
 which concludes the proof of \eqref{P-0-limit-1}. 
 
 Due to relations \eqref{P-0-limit-0}--\eqref{P-0-limit-1}, the analyticity of the aforementioned special functions, the limit argument in relation \eqref{determinant-sign-preserving} and simple properties of the determinants  imply that
 \begin{equation}\label{determinant-sign-preserving-1}
 {\rm det}\left[\begin{matrix}
 J_0(\lambda_0r) & Y_0(\lambda_0r) &I_0(\lambda_0r) & K_0(\lambda_0r)\\
 J_0'(\lambda_0r) & Y_0'(\lambda_0r) &I'_0(\lambda_0r) & K'_0(\lambda_0r)\\
 J_0(\lambda_0R) & Y_0(\lambda_0R) &I_0(\lambda_0R) & K_0(\lambda_0R)\\
 J_0'(\lambda_0R) &Y_0'(\lambda_0R) &I_0'(\lambda_0R) & K_0'(\lambda_0R)
 \end{matrix}\right]=0.
 \end{equation}


\subsection{Wide spherical belts:\ sign-changing eigenfunctions}
For every $z\in \mathbb C$ with  $\frac{1}{2}\pm z\neq 0,-1,-2,\dots,$  and $ \theta\in (\sqrt{\kappa} r,\sqrt{\kappa} R)$ we consider the functions $$\mathcal F(z,\theta)\coloneqq {_2F}_1\left(\frac{3}{2}+z,\frac{3}{2}-z;2;\sin^2\left(\frac{ \theta}{2}\right)\right), $$  
and
\begin{align*}
\mathcal H(z,\theta)\coloneqq &~\mathcal F(z,\theta)\ln\left(\sin^2\left(\frac{ \theta}{2}\right)\right)+\frac{1}{\frac{1}{4}-z^2}\sin^{-2}\left(\frac{ \theta}{2}\right)\\&+ \sum_{m=0}^\infty\frac{\left(\frac{3}{2}+z\right)_m\left(\frac{3}{2}-z\right)_m}{(m+1)!m!}\sin^{2m}\left(\frac{ \theta}{2}\right)\cdot \\&\ \ \ \ \cdot \left( \Psi\left(\frac{3}{2}+z+m\right)+\Psi\left(\frac{3}{2}-z+m\right)-\Psi\left(1+m\right)-\Psi\left(2+m\right)\right).
\end{align*}
%
%
For every $x=x(\theta,\xi)\in B_\kappa(r,R)$, let 
\begin{equation}\label{nodal-function}
w(x)\coloneqq\left(D_1\mathcal F(\gamma^+_\lambda(\kappa),\theta)  + D_2\mathcal H(\gamma^+_\lambda(\kappa),\theta) + D_3\mathcal F(\gamma^-_\lambda(\kappa),\theta)  +D_4\mathcal H(\gamma^-_\lambda(\kappa),\theta) \right)\sin \theta\sin\xi,
\end{equation}
where the constants
$\left\{D_i\right\}_{i=1}^{4}$ are not all zero and  $\frac{1}{2}\pm \gamma^\pm_\lambda(\kappa)\neq 0,-1,-2,\dots$; in the complementary cases we consider the limiting values.  
We notice that $w$  changes its sign on ${B}_\kappa(r,R)$ as $$w(x(\theta,\xi))=-w(x(\theta,\pi+\xi));$$ in fact, $w$ has two azimuthally  opposite nodal  circular arcs,  corresponding to the values $\xi=0$ and $\xi=\pi$, respectively.  
   
Using the hypergeometric differential equation (15.10.1) and (15.10.8) from  \cite{Digital}, relation \eqref{Laplace-operator-1} shows that $w$ from \eqref{nodal-function}  verifies pointwisely the first equation of \eqref{CP-problem}, i.e.,   $\Delta_{\kappa}^2 w=\lambda^4 w$  in $ B_\kappa(r,R).$
Moreover, the factorized form of the latter equation -- which is relevant only in the variable $\theta$ after simplifying by $\sin\xi$ -- is equivalent to 
\begin{equation}\label{factorized-nodal}
\frac{1}{\sin \theta}\frac{{\rm d}}{{\rm d} \theta}\left({\sin \theta}\frac{{\rm d} \widetilde w}{{\rm d} \theta}\right)-\left(\frac{\kappa^2}{\sin^2 \theta}\pm\lambda^2\right)\widetilde w=0, \ \ \theta\in (\sqrt{\kappa} r,\sqrt{\kappa} R).
\end{equation}
In fact, the solutions $\widetilde w$ (both for $\text{`}+\text{'}$ and $\text{`}-\text{'}$) correspond to the four expressions involving the functions $\mathcal F$ and $\mathcal H$ in \eqref{nodal-function}. 
For abbreviation, let 
$$\widetilde {\mathcal F}_{\gamma^\pm_\lambda(\kappa)}(\theta)\coloneqq\mathcal F(\gamma^\pm_\lambda(\kappa),\theta))\sin  \theta\ \ {\rm and}\ \ \widetilde {\mathcal H}_{\gamma^\pm_\lambda(\kappa)}(\theta)\coloneqq\mathcal H(\gamma^\pm_\lambda(\kappa),\theta))\sin  \theta.$$
The  clamped boundary conditions  $w=\dfrac{\partial w}{\partial \textbf{n}}=0 $ on $\partial {B}_\kappa(r,R)$ provide four equations involving the constants $\left\{D_i\right\}_{i=1}^{4}$;  since these constants are not all zero, we necessarily obtain that 
 \begin{equation}\label{determinant-sign-changing}
 {\rm det}\left[\def\arraystretch{1.1}\begin{matrix}
 \widetilde {\mathcal F}_{\gamma^+_\lambda(\kappa)}(\sqrt{\kappa} r) & \widetilde {\mathcal H}_{\gamma^+_\lambda(\kappa)}(\sqrt{\kappa} r) &\widetilde {\mathcal F}_{\gamma^-_\lambda(\kappa)}(\sqrt{\kappa} r) & \widetilde {\mathcal H}_{\gamma^-_\lambda(\kappa)}(\sqrt{\kappa} r)\\
 \widetilde {\mathcal F}'_{\gamma^+_\lambda(\kappa)}(\sqrt{\kappa} r) & \widetilde {\mathcal H}'_{\gamma^+_\lambda(\kappa)}(\sqrt{\kappa} r) &\widetilde {\mathcal F}'_{\gamma^-_\lambda(\kappa)}(\sqrt{\kappa} r) & \widetilde {\mathcal H}'_{\gamma^-_\lambda(\kappa)}(\sqrt{\kappa} r)\\
 \widetilde {\mathcal F}_{\gamma^+_\lambda(\kappa)}(\sqrt{\kappa} R) & \widetilde {\mathcal H}_{\gamma^+_\lambda(\kappa)}(\sqrt{\kappa} R) &\widetilde {\mathcal F}_{\gamma^-_\lambda(\kappa)}(\kappa R) & \widetilde {\mathcal H}_{\gamma^-_\lambda(\kappa)}(\sqrt{\kappa} R)\\
 \widetilde {\mathcal F}'_{\gamma^+_\lambda(\kappa)}(\sqrt{\kappa} R) & \widetilde {\mathcal H}'_{\gamma^+_\lambda(\kappa)}(\sqrt{\kappa} R) &\widetilde {\mathcal F}'_{\gamma^-_\lambda(\kappa)}(\sqrt{\kappa} R) & \widetilde {\mathcal H}'_{\gamma^-_\lambda(\kappa)}(\sqrt{\kappa} R)
 \end{matrix}\right]=0.
\def\arraystretch{1}
 \end{equation}
 Let  $\lambda^{SC}_{r,R}(\kappa)>0$ be the 
 smallest positive zero  of  \eqref{determinant-sign-changing} and   consider 
 \begin{equation}\label{Lambda-1}
 \lambda_1\coloneqq \lambda^{SC}_{r,R}=\lim_{\kappa\to 0}\lambda^{SC}_{r,R}(\kappa).
 \end{equation} 
Similarly as before, we have that $\lambda_1>0$ and, 
 by using Proposition \ref{hipergeometrikus-Bessel-0-1} (with $\mu=1$)  and \eqref{Lambda-1}, it follows that for every $x>0$ we have 
 \begin{equation}\label{tilde-F-limit}
 \lim_{\kappa\to 0}\frac{1}{\sqrt{\kappa}}\widetilde {\mathcal F}_{\gamma^\pm_{\lambda^{SC}_{r,R}(\kappa)}(\kappa)}(\sqrt{\kappa} x)=\frac{2}{\lambda_1}\left\{ \begin{array}{rll}
 J_1(\lambda_1 x)   &{\rm for}&  \text{`}+\text{'}; \\
 I_1(\lambda_1 x)    &{\rm for}&  \text{`}-\text{'}.
 \end{array}\right.
 \end{equation}
 Moreover,  relations \eqref{Digamma-converges} and \eqref{Y_n-forma} (with $n=1$) imply that for every $x>0$ one has the limits
  \begin{equation}\label{tilde-H-limit}
  	\def\arraystretch{1.3}
 \lim_{\kappa\to 0}\frac{1}{\sqrt{\kappa}}\widetilde {\mathcal H}_{\gamma^\pm_{\lambda^{SC}_{r,R}(\kappa)}(\kappa)}(\sqrt{\kappa} x)=\left\{ \begin{array}{rll}
 \frac{2\pi}{\lambda_1}Y_1(\lambda_1 x)
    &{\rm for}&  \text{`}+\text{'}; 
 \\
 \frac{4}{\lambda_1}K_1(\lambda_1 x)
    &{\rm for}&  \text{`}-\text{'}.
 \end{array}\right.
\def\arraystretch{1}
 \end{equation}
 Thus, by taking the limit $\kappa\to 0$ in   
  \eqref{determinant-sign-changing} for $\lambda\coloneqq \lambda^{SC}_{r,R}(\kappa)$ and  by using basic properties of determinants, we obtain
  \begin{equation}\label{determinant-sign-changing-1}
 {\rm det}\left[\begin{matrix}
 J_1(\lambda_1r) & Y_1(\lambda_1r) &I_1(\lambda_1r) & K_1(\lambda_1r)\\
 J_1'(\lambda_1r) & Y_1'(\lambda_1r) &I'_1(\lambda_1r) & K'_1(\lambda_1r)\\
 J_1(\lambda_1R) & Y_1(\lambda_1R) &I_1(\lambda_1R) & K_1(\lambda_1R)\\
 J_1'(\lambda_1R) &Y_1'(\lambda_1R) &I_1'(\lambda_1R) & K_1'(\lambda_1R)
 \end{matrix}\right]=0.
 \end{equation}


\subsection{Threshold spherical belts} Let $0<r<R.$ Using \eqref{Lambda-0} and \eqref{Lambda-1}, we recall that 
 \begin{equation}\label{ket-hatarertek}
\lambda_0\coloneqq\lambda^{SP}_{r,R}=\lim_{\kappa\to 0}\lambda^{SP}_{r,R}(\kappa)\ \  {\rm and }\ \ 
\lambda_1\coloneqq\lambda^{SC}_{r,R}=\lim_{\kappa\to 0}\lambda^{SC}_{r,R}(\kappa),
\end{equation} 
 are the smallest positive solutions to  equations \eqref{determinant-sign-preserving-1} and  \eqref{determinant-sign-changing-1}, respectively. The unit exterior radius case of the
  main result of Coffman, Duffin and Schaffer \cite[\S 4]{CDS} on planar annuli asserts (up to a scaling) that 
\begin{itemize}
	\item[(i)] $\lambda_0<\lambda_1$ whenever ${R}/{r}<{\sf C}_{CDS}$; and
	\item[(ii)]  $\lambda_0>\lambda_1$ whenever ${R}/{r}>{\sf C}_{CDS}$,
\end{itemize}
where ${\sf C}_{CDS}$ is the critical Coffman--Duffin--Schaffer constant. In fact, the value of the  constant ${\sf C}_{CDS}$ is determined whenever $\lambda^{SP}_{r,R}$ and $\lambda^{SC}_{r,R}$ coincide, which is based on certain properties of Bessel functions, see also Coffman and Duffin \cite{CD}.  In this critical case, one has that $\lambda^{SP}_{r,R}=\lambda^{SC}_{r,R}=\frac{\lambda^1_c}{r}=\frac{\lambda^2_c}{R}$ with $\lambda^1_c\approx 0.00062557144$ and $\lambda^2_c\approx 4.769102418$. Accordingly, $${\sf C}_{CDS}=\frac{R}{r}= \frac{\lambda^2_c}{\lambda^1_c}\approx762.3264.$$

It remains to combine the latter relations with the limits \eqref{ket-hatarertek} in order to conclude the existence of $\kappa_0\in (0,\pi/R)$ such that for every $\kappa\in (0,\kappa_0)$:
\begin{itemize}
	\item[(i)] $\lambda^{SP}_{r,R}(\kappa)<\lambda^{SC}_{r,R}(\kappa)$ whenever ${R}/{r}<{\sf C}_{CDS}$, i.e., the first eigenfunction in \eqref{CP-problem} is of fixed sign  (see Figure \ref{1-abra}/(a)); and  
	\item[(ii)] $\lambda^{SP}_{r,R}(\kappa)>\lambda^{SC}_{r,R}(\kappa)$ whenever ${R}/{r}>{\sf C}_{CDS}$, i.e., the first eigenfunction in \eqref{CP-problem} is sign-changing having two azimuthally  opposite nodal circular arcs (see Figure \ref{1-abra}/(b)). 
\end{itemize}
 The proof is concluded.  \hfill $\square$

	
		\begin{figure}[t!]
		\centering
		\includegraphics[scale=1]{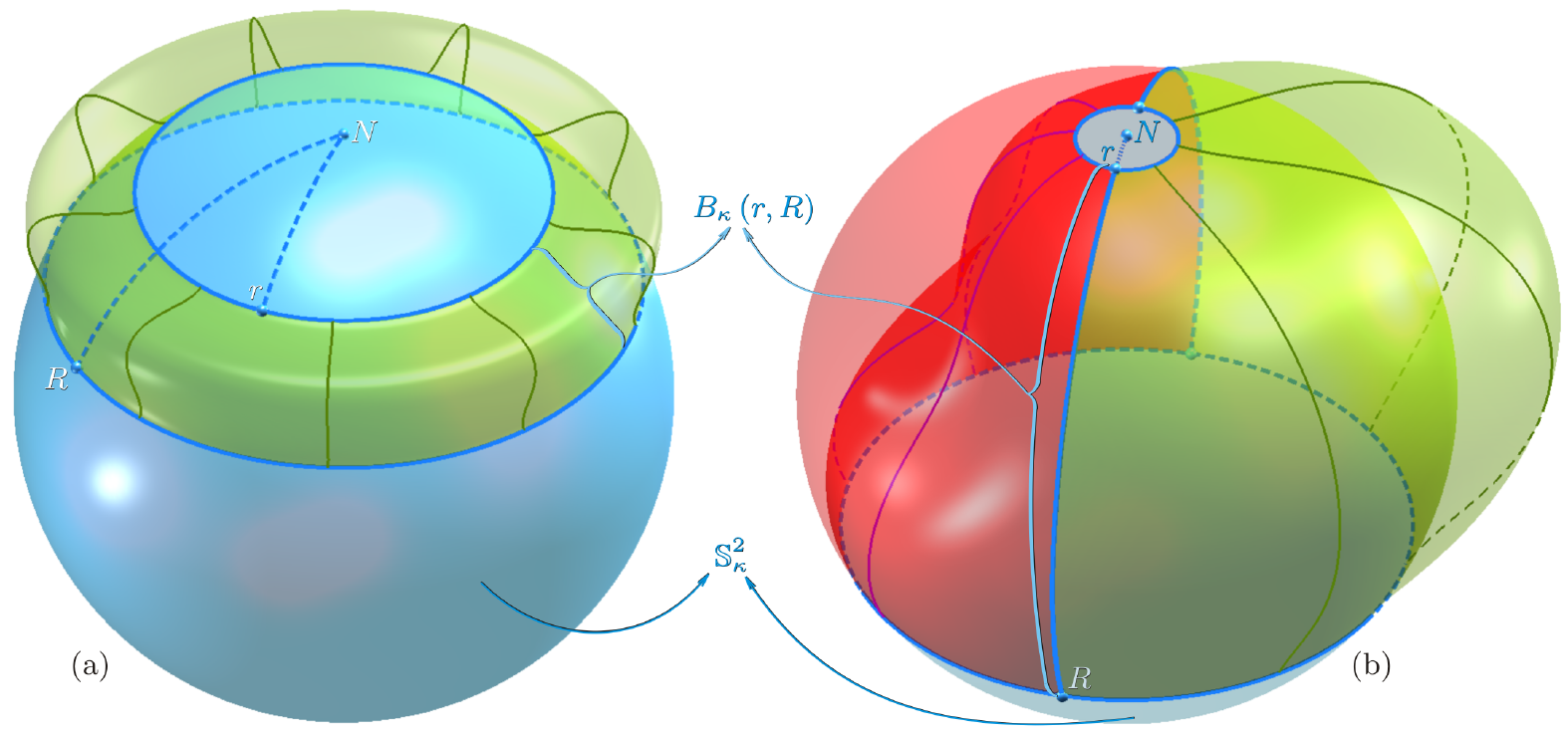}
		\caption{
			Clamped spherical belts $B_\kappa(r,R)\subset \mathbb S_\kappa^2$ of (a) narrow and (b) wide relative widths, in the case of which a first eigenfunction is of fixed sign and sign-changing, respectively. In both cases the zero altitude is represented by the light blue sphere $\mathbb S_\kappa^2$. The positive eigenfunction (a) is rendered with a transparent light green material, while the positive and negative values of the sign-changing eigenfunction (b) are illustrated by light green and dark red transparent materials, respectively. In the latter case, the eigenfunction is divided by two dark blue azimuthally opposite nodal circular arcs, while the preimages of the positive and negative parts are rendered with transparent dark green and light red materials, respectively. When $\kappa \to 0$,  the relative threshold width $R/r$ is given by the critical Coffmann--Duffin--Schaffer constant ${\sf C}_{CDS}\approx{762.3264}$. 
		}\label{1-abra}
	\end{figure}

\begin{remark}\rm  	Let  $\kappa>0$, $R<\pi/\sqrt{\kappa}$ and consider the 2-dimensional spherical cap 
	$$C^2_\kappa(R)=\{x\in \mathbb S_\kappa^2:d_\kappa(N,x)<R\}.$$ Whenever $r\to 0$ (i.e., the interior radius of the spherical belt $B_\kappa(r,R)$ shrinks to the pole $N\in \mathbb S_\kappa^2$), the limit case of Theorem \ref{theorem-belt}/(ii) states that for every $0<R<{\pi}/{\sqrt{\kappa}}$, any first eigenfunction of the clamped problem \eqref{CP-problem} on the punctured spherical cap ${C}^2_\kappa(R)\setminus \{N\}\subset \mathbb S_\kappa^2$ is always sign-changing; this scenario clearly holds for any value of $\kappa$.\ In this particular case  of the punctured clamped spherical cap  ${C}^2_\kappa(R)\setminus \{N\}$  the boundary condition for $u$ reads as $$u=\frac{\partial u}{\partial \textbf{n}}=0\ \ {\rm on}\ \partial {C}^2_\kappa(R)\ \ {\rm and}\ \ u(N)=0,$$ 
	i.e., the tangential condition at the North pole $N$ vanishes. 
\end{remark}

\section{Reduction of Lord Rayleigh's conjecture on positively curved spaces}\label{section-4}

Let $n\in \mathbb N_{\geq 2}$ be fixed. In this section, we assume that $(M,g)$ is an  $n$-dimensional  compact Riemannian manifold  with Ricci curvature ${\sf Ric}_{(M,g)}\geq (n-1)\kappa>0$.  Rescaling the metric, we could consider ${\sf Ric}_{(M,g)}\geq n-1$ (and the unit sphere $\mathbb S^n\coloneqq\mathbb S_1^n)$, but as we have seen in \S \ref{section-3} the presence of $\kappa>0$ was crucial, thus for the sake of coherency we preserve the general form of the initial metric.   

%

	\subsection{Ashbaugh--Benguria--Nadirashvili--Talenti nodal-decomposition}\label{subsection-ABNT}
	
	%
If $S\subset M$ is a measurable set, its \textit{normalized rearrangement} $S^\star\subset\mathbb S_\kappa^n$ is an open spherical cap  centered at the North pole and $$\frac{V_{\kappa}(S^\star)}{V_{\kappa}(\mathbb S_\kappa^n)}=\frac{V_g(S)}{V_g(M)}.$$ In a similar way, if $U:S\to  (0,\infty)$ is a measurable function,  we introduce its \textit{normalized  rearrangement}   $U^\star:S^\star\to (0,\infty)$ such that
	\begin{equation}\label{U-definicio}
	\frac{V_{\kappa}(\{x\in S^\star:U^\star(x)>t \})}{V_{\kappa}(\mathbb S_\kappa^n)}=\frac{V_g(\{x\in S:U(x)>t \})}{V_g(M)},\ \forall t>0.
	\end{equation}
	By construction, $U^\star$ is a spherical cap symmetric function, i.e., $\xi\mapsto U^\star(x(\theta,\xi))$ is constant and
	 \begin{equation}\label{u-plusz-minusz}
	 U^\star(x)=\sup\{t>0:x\in \{U>t\}^\star\}.
	 \end{equation}
	
	\begin{lemma}\label{lemma-1} Let $U:S\to  (0,\infty)$ be an integrable  function and $U^\star:S^\star\to (0,\infty)$ be its normalized  rearrangement. If $W\subseteq S$ is measurable and $W^*\subset \mathbb S_\kappa^n$ is its normalized  rearrangement, then  
		\begin{equation}\label{Riesz}
		\displaystyle\frac{\displaystyle\int_W U{\rm d}v_g}{V_g(M)} \leq \frac{\displaystyle\int_{W^\star}U^\star{\rm d}v_{\kappa}}{V_{\kappa}(\mathbb S_\kappa^n)}.
		\end{equation}
		In addition, if $W=S$, then we have equality in \eqref{Riesz}. 
	\end{lemma}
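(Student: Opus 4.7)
The plan is to reduce the inequality to a comparison of super-level set measures via the layer-cake (Cavalieri) representation, and then exploit the fact that on $\mathbb S_\kappa^n$ the normalized rearrangements of both $W$ and the super-level sets of $U$ are spherical caps centered at the same point $N$, so their intersections behave trivially.

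First, I would write, for any non-negative measurable function and any measurable $W$,
\begin{equation*}
\int_W U\,{\rm d}v_g=\int_0^\infty V_g\big(W\cap\{U>t\}\big)\,{\rm d}t,\qquad \int_{W^\star} U^\star\,{\rm d}v_\kappa=\int_0^\infty V_\kappa\big(W^\star\cap\{U^\star>t\}\big)\,{\rm d}t.
\end{equation*}
By the defining property \eqref{U-definicio} of $U^\star$, the super-level set $\{U^\star>t\}$ is precisely the normalized rearrangement $\{U>t\}^\star$, which is a spherical cap centered at $N$. Since $W^\star$ is also a spherical cap centered at $N$, the two caps are concentric, hence one is contained in the other, and
\begin{equation*}
V_\kappa\big(W^\star\cap\{U>t\}^\star\big)=\min\big(V_\kappa(W^\star),V_\kappa(\{U>t\}^\star)\big).
\end{equation*}

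Dividing by $V_\kappa(\mathbb S_\kappa^n)$ and invoking the normalization identities $V_\kappa(W^\star)/V_\kappa(\mathbb S_\kappa^n)=V_g(W)/V_g(M)$ and $V_\kappa(\{U>t\}^\star)/V_\kappa(\mathbb S_\kappa^n)=V_g(\{U>t\})/V_g(M)$, I obtain
\begin{equation*}
\frac{V_\kappa\big(W^\star\cap\{U^\star>t\}\big)}{V_\kappa(\mathbb S_\kappa^n)}=\min\!\left(\frac{V_g(W)}{V_g(M)},\frac{V_g(\{U>t\})}{V_g(M)}\right)\geq \frac{V_g\big(W\cap\{U>t\}\big)}{V_g(M)},
\end{equation*}
where the last inequality is the trivial bound $V_g(A\cap B)\le \min(V_g(A),V_g(B))$ applied to $A=W$ and $B=\{U>t\}$. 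Integrating this pointwise inequality in $t$ and using the layer-cake formulas yields \eqref{Riesz}.

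For the equality case $W=S$, the inclusion $\{U>t\}\subseteq S$ forces $\{U>t\}^\star\subseteq S^\star=W^\star$, so the concentric-cap intersection reduces to the smaller cap $\{U>t\}^\star$ itself; simultaneously $W\cap\{U>t\}=\{U>t\}$. Both inner expressions in the min coincide with $V_g(\{U>t\})/V_g(M)$, and the layer-cake integrals on the two sides of \eqref{Riesz} become identical. There is no serious obstacle here: the argument is a short rearrangement computation, and the only point that truly uses the spherical geometry is the concentricity of $W^\star$ and $\{U>t\}^\star$, which is built into the definition of normalized rearrangement.
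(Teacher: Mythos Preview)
Your proof is correct and follows essentially the same approach as the paper: layer-cake representation, the trivial bound $V_g(A\cap B)\le\min(V_g(A),V_g(B))$, and the observation that on $\mathbb S_\kappa^n$ the rearranged sets are concentric caps so the $\min$ is attained as an intersection. The paper writes this via a double layer-cake on $\chi_W\cdot U$ (treating $\chi_W$ as a second function), which immediately collapses to your single-variable version since $\{\chi_W>t\}$ is either $W$ or empty; your formulation is marginally more direct but not a different route.
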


\begin{proof}
	Relation \eqref{Riesz} can be viewed as a normalized Hardy--Littlewood--P\'olya  inequality; although it can be verified by standard techniques, we provide its proof for completeness. 
	 Using the layer cake representation and the fact that $\chi_{W^\star}=\chi_W^\star,$ where $\chi_Q$ denotes the indicator function of any non-empty set  $Q$, it turns out by \eqref{U-definicio} that 
	 \begin{align*}
	 \int_W U{\rm d}v_g&=\int_S \chi_W(x)U(x){\rm d}v_g(x)=\int_S \int_0^\infty\int_0^\infty \chi_{\{\chi_W>t\}}(x)\chi_{\{U>s\}}(x){\rm d}t{\rm d}s{\rm d}v_g(x)\\&=\int_0^\infty\int_0^\infty\int_S  \chi_{\{\chi_W>t\}\cap \{U>s\}}(x){\rm d}v_g(x){\rm d}t{\rm d}s=\int_0^\infty\int_0^\infty V_g({\{\chi_W>t\}\cap \{U>s\}}){\rm d}t{\rm d}s\\&\leq  \int_0^\infty\int_0^\infty \min\left\{V_g({\{\chi_W>t\}),V_g(\{U>s\}})\right\}{\rm d}t{\rm d}s\\&= 	 
	 \frac{V_g(M)}{V_{\kappa}(\mathbb S_\kappa^n)}\int_0^\infty\int_0^\infty \min\left\{V_{\kappa}({\{x\in S^\star:\chi_W^\star(x)>t\}),V_{\kappa}(\{x\in S^\star:U^\star(x)>s\}})\right\}{\rm d}t{\rm d}s 
	 \\&=  
	 \frac{V_g(M)}{V_{\kappa}(\mathbb S_\kappa^n)}\int_0^\infty\int_0^\infty V_{\kappa}(\{x\in S^\star:\chi_W^\star(x)>t\} \cap \{x\in S^\star:U^\star(x)>s\}){\rm d}t{\rm d}s\\&=\frac{V_g(M)}{V_{\kappa}(\mathbb S_\kappa^n)}\int_0^\infty\int_0^\infty\int_{S^\star}  \chi_{\{\chi_W^\star>t\}\cap \{U^\star>s\}}(x){\rm d}v_{\kappa}(x){\rm d}t{\rm d}s=\frac{V_g(M)}{V_{\kappa}(\mathbb S_\kappa^n)}\int_{S^\star} \chi_{W^\star}(x)U^\star(x){\rm d}v_{\kappa}(x)\\&=\frac{V_g(M)}{V_{\kappa}(\mathbb S_\kappa^n)}\int_{W^\star} U^\star{\rm d}v_{\kappa}.
	 \end{align*}
 If $W=S$, equality clearly holds in the above relations, since  the set $\{\chi_W>t\}=\{\chi_S>t\}$  is either empty or coincides with the whole $S$ for every $t>0$. 
\end{proof}

Let $\Omega\subset M$ be a fixed open set.	
Since ${\sf Ric}_{(M,g)}\geq (n-1)\kappa>0$, by the Bochner--Lichnerowicz--Weitzenb\"ock formula it follows that the Sobolev space  $H_0^{2}(\Omega)=W_0^{2,2}(\Omega)$ is a proper choice for problem \eqref{CP-problem-0}, see Hebey \cite[Proposition 3.3]{Hebey}.  Furthermore, on account of the compactness of $(M,g)$ (by the Bonnet--Myers theorem) and basic properties of  $W_0^{2,2}(\Omega)$, a similar argument as in Ashbaugh and Benguria \cite[Appendix 2]{A-B} shows that the infimum in (\ref{variational-charact}), i.e., the fundamental tone  	$$	\Lambda_g(\Omega)=\inf_{u\in W_0^{2,2}(\Omega)\setminus \{0\}}\frac{\displaystyle \int_{\Omega}(\Delta_g u)^2 {\rm d}v_g}{\displaystyle \int_{\Omega}u^2 {\rm d}v_g},$$
is achieved. Let $u\in W_0^{2,2}(\Omega)$ be such a minimizer; elliptic regularity results guarantee that $u\in \mathcal C^\infty(\Omega)$. 

 Since $u$ may change its sign in $\Omega$ (see e.g.\ Theorem \ref{theorem-belt}),  denote by $u_+\coloneqq\max(u,0)$ and $u_-\coloneqq-\min(u,0)$  the positive and negative parts of $u$, respectively, and consider also the corresponding preimages $$\Omega_+\coloneqq\{x\in \Omega: u_+(x)>0\}\ \ {\rm  and}\ \  \Omega_-\coloneqq\{x\in \Omega: u_-(x)>0\}.$$ 
In the rest of this section, we assume that 
$$V_g(\Omega_+)V_g(\Omega_-)>0,$$
otherwise the minimizer $u$ is of fixed sign and the subsequent argument becomes considerable simpler.
For further use, we also introduce the sets 
$$\Omega_+^\Delta\coloneqq\{x\in \Omega: (\Delta_gu)_+(x)>0\}\ \ {\rm  and}\ \  \Omega_-^\Delta\coloneqq\{x\in \Omega: (\Delta_gu)_-(x)>0\}.$$
Let $\Omega_\pm^\star\subset \mathbb S_\kappa^n$ and $\big(\Omega_\pm^\Delta\big)^\star\subset \mathbb S_\kappa^n$ be the normalized rearrangements of $\Omega_\pm\subset M$ and $\Omega_\pm^\Delta \subset M, $ respectively.  
	Since the subset of $\Omega$ where $u$ is either constant or harmonic is negligible,  
	one has that   
\begin{equation}\label{terfogatok}
V_\kappa\big(\Omega_+^\star\big)+V_\kappa\big(\Omega_-^\star\big)=V_\kappa\left((\Omega_+^\Delta)^\star\right)+V_\kappa\left((\Omega_-^\Delta)^\star\right)=V_\kappa(\Omega^\star).
\end{equation}	
	Let  $u_\pm^\star:\Omega_\pm^\star\to (0,\infty)$ be the normalized rearrangement of $u_\pm:\Omega_\pm\to (0,\infty)$, i.e.,  for every $t>0,$ 
	\begin{equation}\label{szimmetrizacio-U}
	\frac{V_\kappa\big(\{x\in \Omega_+^\star:u_+^\star(x)>t \}\big)}{V_{\kappa}\big(\mathbb S_\kappa^n\big)}=\frac{V_g\big(\{x\in\Omega_+:u_+(x)>t \}\big)}{V_g(M)}=:j(t),
	\end{equation}
and
	\begin{equation}\label{szimmetrizacio-U-0}
	\frac{V_\kappa\big(\{x\in \Omega_-^\star:u_-^\star(x)>t \}\big)}{V_{\kappa}\big(\mathbb S_\kappa^n\big)}=\frac{V_g\big(\{x\in\Omega_-:u_-(x)>t \}\big)}{V_g(M)}=:h(t),
	\end{equation}
see Figure \ref{2-abra}. 
	\begin{figure}[h!]
	\centering
	\includegraphics[scale=1]{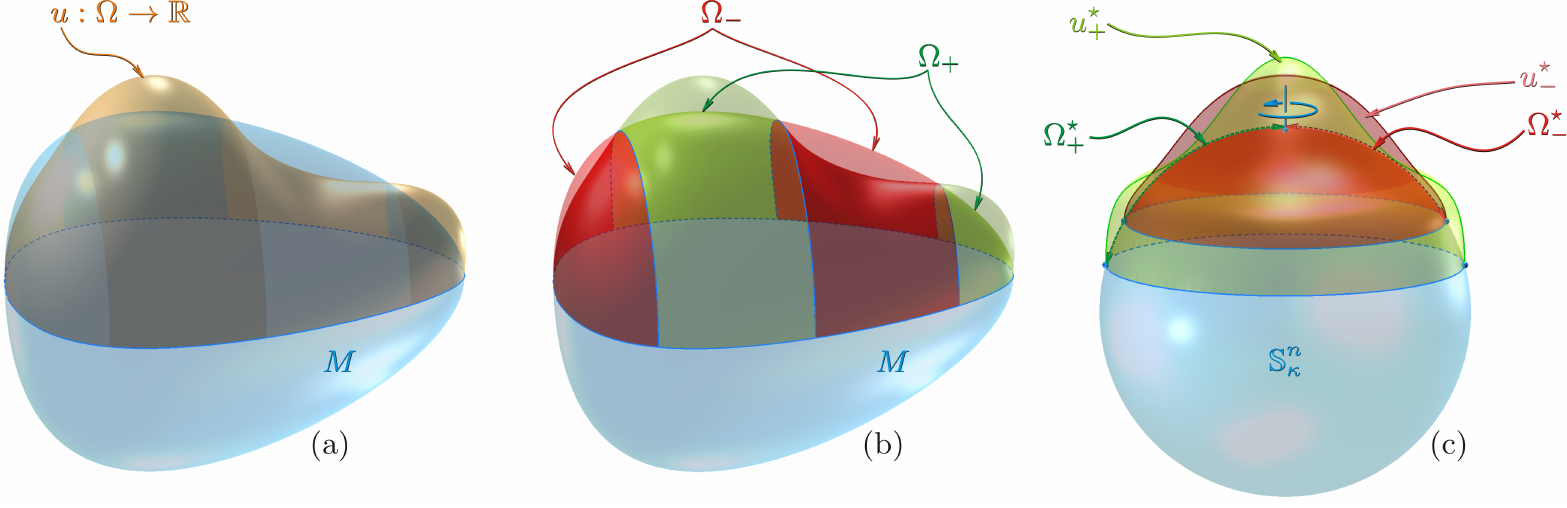}
	\caption{Illustration of the Ashbaugh--Benguria--Nadirashvili--Talenti nodal-decomposition argument.  
		(a) A first eigenfunction $u:\Omega\to \mathbb R$ of the clamped plate problem  \eqref{CP-problem-0} having nodal domains (the zero altitude is represented by the transparent light blue manifold $M$). (b) Rendering the segments of the null-measure set $u^{-1}(0)$ with dark blue lines, the image shows the nodal-decomposition of the open set $\Omega\subset M$ into the transparent light red and dark green preimages   $\Omega_-=\{u<0\}$ and  $\Omega_+=\{u>0\}$, respectively.  (c) The normalized rearrangements  $u_\pm^\star: \Omega_\pm^\star \to (0,\infty)$ of  $u_\pm:\Omega_\pm\to (0,\infty)$  on the spherical caps $\Omega_\pm^\star\subset \mathbb S^n_\kappa$, cf.\ relations \eqref{szimmetrizacio-U} and \eqref{szimmetrizacio-U-0}.
	}\label{2-abra}
\end{figure}



Let $T_u^\pm\coloneqq \sup_{x\in \Omega_\pm}u_\pm(x)\geq 0;$ clearly, by definition, one has that $j(t)=0$ for every $t\geq T_u^+$ and  $h(t)=0$ for every $t\geq T_u^-$.

In the same way as in \eqref{szimmetrizacio-U} and \eqref{szimmetrizacio-U-0}, we  introduce the normalized rearrangements $(\Delta_gu)_\pm^\star$ of $(\Delta_gu)_\pm:\Omega_\pm^\Delta\to (0,\infty)$.  We extend $u_\pm^\star:\Omega_\pm^\star\to (0,\infty)$ and $(\Delta_gu)_\pm^\star:(\Omega_\pm^\Delta)^\star\to (0,\infty)$ by zero  to the whole $\Omega^\star$ outside of  $\Omega_\pm^\star$ and $\big(\Omega_\pm^\Delta\big)^\star$, respectively. 	
Our further analysis is based on fine properties of the functions $$\mathcal J(s)\coloneqq(\Delta_g u)^*_-(s)-(\Delta_g u)^*_+(V_\kappa(\Omega^\star)-s)$$ {\rm and}\ \ $$\mathcal H(s)\coloneqq-\mathcal J(V_\kappa(\Omega^\star)-s),\ \ s\in [0,V_\kappa(\Omega^\star)],$$	
	where we will use the notation 
\begin{equation}\label{J-J-notation}
(\Delta_g u)^*_\pm(s)\coloneqq (\Delta_g u)^\star_\pm(x),\ \ {\rm whenever}\ \  s=V_\kappa(C_\kappa^n(d_\kappa(N,x))),\ x\in \Omega^\star.
\end{equation}	
Some useful properties of the functions $\mathcal J$ and $\mathcal H$ are summarized in the sequel. 

\begin{lemma}\label{basic=J-H}
For every $ \sigma\in [0,V_\kappa(\Omega^\star)],$ one has that$:$ 
\begin{itemize}
	\item[(i)]  $\ds\int_0^\sigma \mathcal J(s){\rm d}s\geq \ds\int_0^{V_\kappa(\Omega^\star)} \mathcal J(s){\rm d}s=0;$
	\item[(ii)] $ \ds \int_0^\sigma \mathcal H(s){\rm d}s\geq \ds\int_0^{V_\kappa(\Omega^\star)} \mathcal H(s){\rm d}s=0;$ 
	\item [(iii)] $\ds\int_{\Omega_+^\star} \mathcal J(V_\kappa(C_\kappa^n(d_\kappa(N,x)))){\rm d}v_\kappa(x)= \int_{\Omega_-^\star} \mathcal H(V_\kappa(C_\kappa^n(d_\kappa(N,x)))){\rm d}v_\kappa(x).$
\end{itemize}
\end{lemma}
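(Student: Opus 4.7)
The plan is to combine three ingredients: the clamped boundary conditions force $\int_\Omega \Delta_g u\,{\rm d}v_g = 0$; the equality case of Lemma \ref{lemma-1} transfers this cancellation from $M$ to the sphere via equimeasurability; and the monotonicity of the decreasing rearrangements $(\Delta_g u)_\pm^*$ controls the sign of the cumulative integrals of $\mathcal J$.

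First I would translate all relevant integrals from $\Omega$ to the sphere by means of \eqref{change-variables}: since $(\Delta_g u)_\pm^\star$ vanishes outside $(\Omega_\pm^\Delta)^\star$,
$$\int_0^{V_\kappa(\Omega^\star)} (\Delta_g u)_\pm^*(s)\,{\rm d}s = \int_{(\Omega_\pm^\Delta)^\star}(\Delta_g u)_\pm^\star\,{\rm d}v_\kappa = \frac{V_\kappa(\mathbb S_\kappa^n)}{V_g(M)}\int_{\Omega_\pm^\Delta}(\Delta_g u)_\pm\,{\rm d}v_g,$$
where the second equality comes from Lemma \ref{lemma-1} applied with $S=W=\Omega_\pm^\Delta$ and $U=(\Delta_g u)_\pm$. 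The divergence theorem combined with the clamped boundary conditions gives $\int_\Omega \Delta_g u\,{\rm d}v_g = 0$, hence the positive and negative parts of $\Delta_g u$ have equal total integrals; the substitution $t = V_\kappa(\Omega^\star) - s$ in the second term of $\mathcal J$ then immediately produces $\int_0^{V_\kappa(\Omega^\star)}\mathcal J(s)\,{\rm d}s = 0$.

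The key observation for the stronger cumulative inequality in (i) is that $\mathcal J$ is non-increasing on $[0, V_\kappa(\Omega^\star)]$: indeed $s \mapsto (\Delta_g u)_-^*(s)$ is non-increasing by definition of the decreasing rearrangement, while $s \mapsto (\Delta_g u)_+^*(V_\kappa(\Omega^\star) - s)$ is non-decreasing, so their difference is non-increasing. A non-increasing function with vanishing total integral is necessarily nonnegative on some initial subinterval $[0,\sigma_0]$ and nonpositive on $[\sigma_0, V_\kappa(\Omega^\star)]$, so its antiderivative $F(\sigma)\coloneqq \int_0^\sigma \mathcal J(s)\,{\rm d}s$ satisfies $F(0) = F(V_\kappa(\Omega^\star)) = 0$ and is unimodal, whence $F(\sigma) \geq 0$ throughout.

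Statement (ii) is a direct consequence of (i) via the symmetry $\mathcal H(s) = -\mathcal J(V_\kappa(\Omega^\star)-s)$: the substitution $t = V_\kappa(\Omega^\star)-s$ together with the identity $\int_0^{V_\kappa(\Omega^\star)}\mathcal J\,{\rm d}s = 0$ yields $\int_0^\sigma \mathcal H(s)\,{\rm d}s = \int_0^{V_\kappa(\Omega^\star)-\sigma}\mathcal J(t)\,{\rm d}t \geq 0$. Finally, for (iii), applying \eqref{change-variables} to each side converts the identity into $\int_0^{V_\kappa(\Omega_+^\star)}\mathcal J(s)\,{\rm d}s = \int_0^{V_\kappa(\Omega_-^\star)}\mathcal H(s)\,{\rm d}s$; the preceding computation rewrites the right-hand side as $\int_0^{V_\kappa(\Omega^\star)-V_\kappa(\Omega_-^\star)}\mathcal J(t)\,{\rm d}t$, which equals the left-hand side because \eqref{terfogatok} gives $V_\kappa(\Omega^\star)-V_\kappa(\Omega_-^\star) = V_\kappa(\Omega_+^\star)$. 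The argument is essentially bookkeeping; the only point requiring care is the correct alignment of the normalization constants $V_g(M)$ and $V_\kappa(\mathbb S_\kappa^n)$ coming from Lemma \ref{lemma-1} with those appearing in \eqref{change-variables}, but no genuine analytic obstacle is anticipated.
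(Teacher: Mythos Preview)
Your proposal is correct and follows essentially the same route as the paper: the divergence theorem with the clamped boundary conditions yields $\int_\Omega \Delta_g u\,{\rm d}v_g=0$, Lemma~\ref{lemma-1} (with $S=W=\Omega_\pm^\Delta$) and \eqref{change-variables} transfer this to $\int_0^{V_\kappa(\Omega^\star)}\mathcal J\,{\rm d}s=0$, and monotonicity of $\mathcal J$ gives the cumulative inequality. The only cosmetic difference is that for (ii) you deduce it from (i) via the symmetry $\mathcal H(s)=-\mathcal J(V_\kappa(\Omega^\star)-s)$, whereas the paper simply repeats the argument; for (iii) both arguments are identical after unwinding.
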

\noindent {\it Proof.} By the divergence theorem and the boundary condition 	$\dfrac{\partial u}{\partial \textbf{n}}=0 $ on $\partial \Omega$, it turns out that $\ds\int_\Omega \Delta_gu{\rm d}v_g=0$. Therefore, by applying Lemma \ref{lemma-1} (with  $S=W\coloneqq \Omega_\pm^\Delta$ and $U\coloneqq (\Delta_g u)_\pm$) and by using the change of variables \eqref{change-variables}, it turns out that
\begin{align*}
	0&=\frac{V_{\kappa}(\mathbb S_\kappa^n)}{V_g(M)}\int_\Omega \Delta_gu{\rm d}v_g=-\frac{V_{\kappa}(\mathbb S_\kappa^n)}{V_g(M)}\left(\int_{\Omega_-^\Delta} (\Delta_gu)_-{\rm d}v_g-\int_{\Omega_+^\Delta} (\Delta_gu)_+{\rm d}v_g\right)\\&=
	\int_{(\Omega_-^\Delta)^\star} (\Delta_gu)_-^\star{\rm d}v_\kappa-\int_{(\Omega_+^\Delta)^\star} (\Delta_gu)_+^\star{\rm d}v_\kappa=\int_{\Omega^\star} (\Delta_gu)_-^\star(x){\rm d}v_\kappa(x)-\int_{\Omega^\star} (\Delta_gu)_+^\star(x){\rm d}v_\kappa(x)\\&=\int_{\Omega^\star} (\Delta_gu)_-^*(V_\kappa(C_\kappa^n(d_\kappa(N,x)))){\rm d}v_\kappa(x)-\int_{\Omega^\star} (\Delta_gu)_+^*(V_\kappa(C_\kappa^n(d_\kappa(N,x)))){\rm d}v_\kappa(x)\\&=
	\int_0^{V_\kappa(\Omega^\star)}\left((\Delta_gu)_-^*(s)-(\Delta_gu)_+^*(s)\right){\rm d}s=\int_0^{V_\kappa(\Omega^\star)}\left((\Delta_gu)_-^*(s)-(\Delta_gu)_+^*(V_\kappa(\Omega^\star)-s)\right){\rm d}s\\&=\int_0^{V_\kappa(\Omega^\star)}\mathcal J(s){\rm d}s.	
\end{align*}
In a similar way, one obtains that $\ds \int_0^{V_\kappa(\Omega^\star)}\mathcal H(s){\rm d}s=0$. Now, 
by the definition of the normalized rearrangement, the functions $s\mapsto \mathcal J(s)$ and $s\mapsto \mathcal H(s)$ are decreasing on $[0,V_\kappa(\Omega^\star)];$ therefore 
\begin{equation}\label{basic-2-j-h}
\int_0^{\sigma} \mathcal J(s){\rm d}s\geq 0\ {\rm and}\ \int_0^\sigma \mathcal H(s){\rm d}s\geq 0,\ \ \sigma\in [0,V_\kappa(\Omega^\star)],
\end{equation}
which conclude the proof of (i) and (ii).

(iii) By the first part of the proof, relations \eqref{terfogatok} and  \eqref{change-variables} we have that   
\begin{align*}
0&=\int_0^{V_\kappa(\Omega^\star)}\mathcal J(s){\rm d}s=\int_0^{V_\kappa(\Omega^\star_+)}\mathcal J(s){\rm d}s-\int_0^{V_\kappa(\Omega^\star_-)}\mathcal H(s){\rm d}s\\&=\ds\int_{\Omega_+^\star} \mathcal J(V_\kappa(C_\kappa^n(d_\kappa(N,x)))){\rm d}v_\kappa(x)- \int_{\Omega_-^\star} \mathcal H(V_\kappa(C_\kappa^n(d_\kappa(N,x)))){\rm d}v_\kappa(x),
\end{align*}
 which completes the proof. 
\hfill $\square$	\\

Besides Lemma \ref{basic=J-H}, we need more precise estimates for the quantities  in \eqref{basic-2-j-h} as follows.

	

	\begin{proposition} \label{f-hasonlitas}
		The following statements hold$:$ 
		\begin{itemize}
			\item[(i)] $\ds\displaystyle\int_0^{V_{\kappa}(\mathbb S_\kappa^n)j(t)} \mathcal J(s){\rm d}s\geq -\frac{V_{\kappa}(\mathbb S_\kappa^n)}{V_g(M)}\int_{\{u>t \}}\Delta_g u(x){\rm d}v_g(x)$ for every $t\in [0,T_u^+];$
			\item[(ii)] $\ds\displaystyle\int_0^{V_{\kappa}(\mathbb S_\kappa^n)h(t)} \mathcal H(s){\rm d}s\geq -\frac{V_{\kappa}(\mathbb S_\kappa^n)}{V_g(M)}\int_{\{u<-t \}}\Delta_g u(x){\rm d}v_g(x)$ for every $t\in [0,T_u^-];$
			\item[(iii)] either $(\Delta_g u)^*_-(s)=0$ or $ (\Delta_g u)^*_+(V_\kappa(\Omega^\star)-s) = 0$ for every $s\in [0,V_\kappa(\Omega^\star)]$.
		\end{itemize}
		%
	\end{proposition}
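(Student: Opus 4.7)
My plan is to deduce parts (i) and (ii) from two strategically chosen applications of Lemma \ref{lemma-1}, while (iii) will fall out of the support identities induced by \eqref{terfogatok}.

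For (i), I would first rewrite $-\int_{\{u>t\}}\Delta_g u\,dv_g = A_t^- - A_t^+$ with $A_t^\pm \coloneqq \int_{\{u>t\}}(\Delta_g u)_\pm\,dv_g$, reducing the claim to the two separate estimates
\[
\tfrac{V_\kappa(\mathbb S_\kappa^n)}{V_g(M)}A_t^- \leq \int_0^{V_\kappa(\mathbb S_\kappa^n)j(t)}(\Delta_g u)^*_-(s)\,ds, \qquad \tfrac{V_\kappa(\mathbb S_\kappa^n)}{V_g(M)}A_t^+ \geq \int_0^{V_\kappa(\mathbb S_\kappa^n)j(t)}(\Delta_g u)^*_+(V_\kappa(\Omega^\star)-s)\,ds.
\]
The first is immediate from Lemma \ref{lemma-1} applied with $S=\Omega$, $W=\{u>t\}$, and $U=(\Delta_g u)_-$ (extended by zero outside $\Omega_-^\Delta$): since $V_g(W)/V_g(M)=j(t)$, the cap $W^\star$ has volume $V_\kappa(\mathbb S_\kappa^n)j(t)$, and the spherical-cap symmetry of $(\Delta_g u)^\star_-$ together with \eqref{change-variables} identifies $\int_{W^\star}(\Delta_g u)^\star_-\,dv_\kappa$ with $\int_0^{V_\kappa(\mathbb S_\kappa^n)j(t)}(\Delta_g u)^*_-(s)\,ds$. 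For the second I would apply Lemma \ref{lemma-1} to the \emph{complementary} set $W=\{u\leq t\}\cap\Omega$ with $U=(\Delta_g u)_+$, so that $W^\star$ is now the cap of volume $V_\kappa(\Omega^\star)-V_\kappa(\mathbb S_\kappa^n)j(t)$. Subtracting this bound from the equality case of Lemma \ref{lemma-1} at $W=S=\Omega$ (its ``In addition'' clause) isolates $\tfrac{V_\kappa(\mathbb S_\kappa^n)}{V_g(M)}A_t^+$, bounded below by an integral of $(\Delta_g u)^*_+$ over the complementary annulus; the substitution $\tau=V_\kappa(\Omega^\star)-s$ brings this into the desired form.

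Part (ii) I would obtain by simply applying (i) to $v \coloneqq -u$: direct inspection of the definitions gives $v_\pm=u_\mp$, $(\Delta_g v)_\pm=(\Delta_g u)_\mp$, $j_v=h$ and $\mathcal{J}_v=\mathcal{H}$, so (i) for $v$ becomes $\int_0^{V_\kappa(\mathbb S_\kappa^n)h(t)}\mathcal{H}(s)\,ds \geq \tfrac{V_\kappa(\mathbb S_\kappa^n)}{V_g(M)}\int_{\{u<-t\}}\Delta_g u\,dv_g$; the divergence theorem together with the clamped boundary conditions then gives $\int_{\{u<-t\}}\Delta_g u\,dv_g\geq 0$, which immediately implies the (weaker) inequality stated in (ii). Part (iii) is direct: $\mathrm{supp}(\Delta_g u)^*_\pm \subseteq [0,V_\kappa((\Omega_\pm^\Delta)^\star)]$, and by \eqref{terfogatok} one has $V_\kappa((\Omega_+^\Delta)^\star)+V_\kappa((\Omega_-^\Delta)^\star) = V_\kappa(\Omega^\star)$, so the conditions $(\Delta_g u)^*_-(s)>0$ and $(\Delta_g u)^*_+(V_\kappa(\Omega^\star)-s)>0$ require respectively $s<V_\kappa((\Omega_-^\Delta)^\star)$ and $s>V_\kappa((\Omega_-^\Delta)^\star)$, which are incompatible.

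The chief technical subtlety lies in the ``reflected'' term $(\Delta_g u)^*_+(V_\kappa(\Omega^\star)-s)$ appearing in $\mathcal{J}$: this is effectively an \emph{increasing} rearrangement on $[0,V_\kappa(\Omega^\star)]$, whereas Lemma \ref{lemma-1} produces only upper bounds on integrals of decreasing rearrangements. The decisive maneuver is therefore to pass to the complementary level set $\{u\leq t\}$ and telescope against the $W=S=\Omega$ equality case, converting the inconvenient increasing-rearrangement bound into a tractable decreasing-rearrangement bound on the complementary annular cap.
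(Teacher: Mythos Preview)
Your argument is correct and follows essentially the same route as the paper: the two applications of Lemma~\ref{lemma-1} you describe for (i) (one to $\{u>t\}$ with $(\Delta_g u)_-$, the complementary one to $\{u\le t\}$ with $(\Delta_g u)_+$, subtracted from the $W=S$ equality case) are exactly the estimates~\eqref{I-becsles} and~\eqref{II-becsles-0}--\eqref{II-becsles-3} in the paper, and your treatment of (iii) is identical to the paper's. Your symmetry shortcut $v=-u$ for (ii) is precisely what the paper means by ``similar to (i)''; note that it yields the inequality with $+\frac{V_\kappa(\mathbb S_\kappa^n)}{V_g(M)}\int_{\{u<-t\}}\Delta_g u$ on the right (which is in fact what is used later, cf.~\eqref{peri-2}), and your extra divergence-theorem step is only needed because the sign in the displayed statement of (ii) is opposite to that natural version.
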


\noindent 	{\it Proof.} (i)  Let $t\in [0,T_u^+]$ be fixed; due to \eqref{szimmetrizacio-U}, one can find a unique number  $a_t\geq 0$ such that $V_\kappa(C_\kappa^n(a_t))=V_{\kappa}(\mathbb S_\kappa^n)j(t).$ Moreover, since $\{u>t \}^\star=\{u_+>t \}^\star=C_\kappa^n(a_t)$, it follows that
	$\left(\{u>t \}\cap \Omega_-^\Delta\right)^\star\subseteq C_\kappa^n(a_t)\cap (\Omega_-^\Delta)^\star.$
	Using the latter relation together with the change of variables \eqref{change-variables} and Lemma \ref{lemma-1} (with $S\coloneqq \Omega_-^\Delta$, $W\coloneqq \{u>t \}\cap \Omega_-^\Delta$ and $U\coloneqq (\Delta_g u)_-$), we have that
	\begin{align}\label{I-becsles}
	I&\coloneqq  \nonumber	\displaystyle\int_0^{V_{\kappa}(\mathbb S_\kappa^n)j(t)} (\Delta_g u)^*_-(s){\rm d}s=\displaystyle\int_0^{V_\kappa(C_\kappa^n(a_t))} (\Delta_g u)^*_-(s){\rm d}s=\int_{C_\kappa^n(a_t)}(\Delta_g u)^*_-(V_\kappa(C_\kappa^n(d_\kappa(N,x))){\rm d} v_\kappa(x)\\&= \nonumber \int_{C_\kappa^n(a_t)}(\Delta_g u)^\star_-(x){\rm d} v_\kappa(x)=\int_{C_\kappa^n(a_t)\cap (\Omega_-^\Delta)^\star}(\Delta_g u)^\star_-(x){\rm d} v_\kappa(x)
	\\&\geq   \int_{\left(\{u>t \}\cap \Omega_-^\Delta\right)^\star}(\Delta_g u)^\star_-(x){\rm d} v_\kappa(x)\nonumber	\\&\geq \nonumber \frac{V_{\kappa}(\mathbb S_\kappa^n)}{V_g(M)}\int_{\{u>t \}\cap \Omega_-^\Delta}(\Delta_g u)_-(x){\rm d}v_g(x)
	\\&=  \frac{V_{\kappa}(\mathbb S_\kappa^n)}{V_g(M)}\int_{\{u>t \}}(\Delta_g u)_-(x){\rm d}v_g(x).
	\end{align}
 	Let $\widetilde a_t>0$ be the unique value such that $V_\kappa(C_\kappa^n(\widetilde a_t))=V_\kappa(\Omega^\star)-V_{\kappa}(\mathbb S_\kappa^n)j(t)$.
In a similar manner as above, by a change of variables and \eqref{change-variables},  we infer that 
		\begin{align}\label{II-becsles-0}
\nonumber 	II&\coloneqq 	\displaystyle \int_0^{V_{\kappa}(\mathbb S_\kappa^n)j(t)} (\Delta_g u)^*_+(V_\kappa(\Omega^\star)-s){\rm d}s\\&= \nonumber\int_0^{V_\kappa(\Omega^\star)} (\Delta_g u)^*_+(s){\rm d}s- \int_0^{V_\kappa(\Omega^\star)-V_{\kappa}(\mathbb S_\kappa^n)j(t)} (\Delta_g u)^*_+(s){\rm d}s \\&=  \nonumber\int_{\Omega^\star}(\Delta_g u)^*_+(V_\kappa(C_\kappa^n(d_\kappa(N,x)))){\rm d} v_\kappa(x)-\int_{C_\kappa^n(\widetilde a_t)}(\Delta_g u)^*_+(V_\kappa(C_\kappa^n(d_\kappa(N,x)))){\rm d} v_\kappa(x)\\&=  \int_{(\Omega_+^\Delta)^\star}(\Delta_g u)^\star_+(x){\rm d} v_\kappa(x)-\int_{C_\kappa^n(\widetilde a_t)\cap (\Omega_+^\Delta)^\star}(\Delta_g u)^\star_+(x){\rm d} v_\kappa(x).
	\end{align}
Applying Lemma \ref{lemma-1} (with $S=W\coloneqq \Omega_+^\Delta$  and $U\coloneqq (\Delta_g u)_+$), one has that
\begin{eqnarray}\label{II-becsles-1}
\int_{(\Omega_+^\Delta)^\star}(\Delta_g u)^\star_+(x){\rm d} v_\kappa(x)=\frac{V_{\kappa}(\mathbb S_\kappa^n)}{V_g(M)}\int_{\Omega_+^\Delta}(\Delta_g u)_+(x){\rm d} v_g(x).
\end{eqnarray}
Furthermore, 
if we consider the set $S_t=\{u\leq t\}$, it turns out that $S_t^\star=C_\kappa^n(\widetilde a_t)$; indeed, the latter fact follows from the definition of normalized rearrangement combined with  
relations  $V_g(S_t)=V_g(\Omega)-V_g(M)j(t)$ and $V_\kappa(C_\kappa^n(\widetilde a_t))=V_\kappa(\Omega^\star)-V_{\kappa}(\mathbb S_\kappa^n)j(t)$. Accordingly, due to Lemma \ref{lemma-1} (with $S\coloneqq \Omega_+^\Delta$, $W\coloneqq S_t\cap \Omega_+^\Delta$ and $U\coloneqq (\Delta_g u)_+$), it follows that
\begin{align}\label{II-becsles-2}
\nonumber \int_{C_\kappa^n(\widetilde a_t)\cap (\Omega_+^\Delta)^\star}(\Delta_g u)^\star_+(x){\rm d} v_\kappa(x)&=\int_{S_t^\star\cap (\Omega_+^\Delta)^\star}(\Delta_g u)^\star_+(x){\rm d} v_\kappa(x)\geq \int_{(S_t\cap \Omega_+^\Delta)^\star}(\Delta_g u)^\star_+(x){\rm d} v_\kappa(x)\\&\geq  \frac{V_{\kappa}(\mathbb S_\kappa^n)}{V_g(M)}\int_{S_t\cap \Omega_+^\Delta}(\Delta_g u)_+(x){\rm d} v_g(x).
\end{align}
Using relations  \eqref{II-becsles-0}--\eqref{II-becsles-2}, it follows that 
\begin{align}\label{II-becsles-3}
	\nonumber II&\leq   \frac{V_{\kappa}(\mathbb S_\kappa^n)}{V_g(M)}\int_{\Omega_+^\Delta}(\Delta_g u)_+(x){\rm d} v_g(x)-\frac{V_{\kappa}(\mathbb S_\kappa^n)}{V_g(M)}\int_{S_t\cap \Omega_+^\Delta}(\Delta_g u)_+(x){\rm d} v_g(x)\\&=	\nonumber\frac{V_{\kappa}(\mathbb S_\kappa^n)}{V_g(M)}\int_{\Omega_+^\Delta\setminus S_t}(\Delta_g u)_+(x){\rm d} v_g(x)=\frac{V_{\kappa}(\mathbb S_\kappa^n)}{V_g(M)}\int_{\Omega_+^\Delta\cap  \{u>t\}}(\Delta_g u)_+(x){\rm d} v_g(x)\\&=\frac{V_{\kappa}(\mathbb S_\kappa^n)}{V_g(M)}\int_{  \{u>t\}}(\Delta_g u)_+(x){\rm d} v_g(x).
\end{align}
The estimates \eqref{I-becsles} and \eqref{II-becsles-3} imply that
\begin{align*}
\ds\displaystyle\int_0^{V_{\kappa}(\mathbb S_\kappa^n)j(t)} \mathcal J(s){\rm d}s&=I-II\\&\geq\frac{V_{\kappa}(\mathbb S_\kappa^n)}{V_g(M)}\int_{\{u>t \}}(\Delta_g u)_-(x){\rm d}v_g(x) -\frac{V_{\kappa}(\mathbb S_\kappa^n)}{V_g(M)}\int_{  \{u>t\}}(\Delta_g u)_+(x){\rm d} v_g(x)\\&=-\frac{V_{\kappa}(\mathbb S_\kappa^n)}{V_g(M)}\int_{\{u>t \}}(\Delta_g u)(x){\rm d}v_g(x),
\end{align*}
	which is precisely the required inequality.  

(ii) The proof is similar to (i). 

(iii) Let us fix the parameter $s\in [0,V_\kappa(\Omega^\star)]$; if either $s=0$ or $s=V_\kappa(\Omega^\star)$, the claim trivially holds. Otherwise,  pick $x,\widetilde x\in \Omega^\star$ such that $s=V_\kappa(C_\kappa^n(d_\kappa(N,x)))$ and $V_\kappa(\Omega^\star)-s=V_\kappa(C_\kappa^n(d_\kappa(N,\widetilde x)))$. 

On one hand, if $x\notin (\Omega_-^\Delta)^\star$, then $(\Delta_g u)^*_-(s)=(\Delta_g u)^*_-(V_\kappa(C_\kappa^n(d_\kappa(N,x))))=(\Delta_g u)^\star_-(x)=0$. On the other hand, if $x\in (\Omega_-^\Delta)^\star$, then $s=V_\kappa(C_\kappa^n(d_\kappa(N,x)))< V_\kappa((\Omega_-^\Delta)^\star)$. Consequently, $$V_\kappa(C_\kappa^n(d_\kappa(N,\widetilde x)))=V_\kappa(\Omega^\star)-s> V_\kappa(\Omega^\star)-V_\kappa((\Omega_-^\Delta)^\star)=V_\kappa((\Omega_+^\Delta)^\star),$$ i.e., $(\Omega_+^\Delta)^\star\subset C_\kappa^n(d_\kappa(N,\widetilde x))$ with strict inclusion. Therefore, one has $\widetilde x\notin (\Omega_+^\Delta)^\star$, i.e.,  $0=(\Delta_g u)^\star_+(\widetilde x)=(\Delta_g u)^*_+(V_\kappa(C_\kappa^n(d_\kappa(N,\widetilde x))))=(\Delta_g u)^*_+(V_\kappa(\Omega^\star)-s)$, which concludes the proof.  
	\hfill $\square$\\
	
	
	For further use, let  $a,b\geq 0$ and $L>0$ be such that 
	\begin{equation}\label{a-b}
	C_\kappa^n(a)=\Omega_+^\star,\ \   C_\kappa^n(b)=\Omega_-^\star \ \ {\rm and} \ \ C_\kappa^n(L)=\Omega^\star.
	\end{equation}
%
%
	The main result of this subsection reads as follows. 
		
	\begin{theorem} \label{talenti-result} The real functions 
			\begin{equation}\label{v-definit}
			U_a(x)\coloneqq\frac{1}{n\omega_n}\int_{d_\kappa(N,x)}^a \left(\frac{\sin(\sqrt{\kappa}\rho)}{\sqrt{\kappa}}\right)^{1-n}\left(\int_0^{V_\kappa(C_\kappa^n(\rho))}\mathcal J(s){\rm d}s\right){\rm d}\rho,\ x\in \Omega^\star,
			%
		\end{equation}
		and
		\begin{equation}\label{w-definit}
			U_b(x)\coloneqq\frac{1}{n\omega_n}\int_{d_\kappa(N,x)}^b \left(\frac{\sin(\sqrt{\kappa}\rho)}{\sqrt{\kappa}}\right)^{1-n}\left(\int_0^{V_\kappa(C_\kappa^n(\rho))}\mathcal H(s){\rm d}s\right){\rm d}\rho,\ x\in \Omega^\star,
		\end{equation}
	satisfy the following 	statements$:$ 
		\begin{itemize}
			\item[(i)] $\displaystyle\frac{V_{\kappa}(\mathbb S_\kappa^n)}{V_g(M)} \int_{\Omega} (\Delta_g u)^2{\rm d}v_g=\int_{C_\kappa^n(a)} (\Delta_\kappa U_a)^2{\rm d}v_\kappa+ \int_{C_\kappa^n(b)} (\Delta_\kappa U_b)^2{\rm d}v_\kappa;$
			\item[(ii)] $\displaystyle\frac{V_{\kappa}(\mathbb S_\kappa^n)}{V_g(M)}\int_{\Omega}u^2 {\rm d}v_g\leq \int_{C_\kappa^n(a)}U_a^2 {\rm d}v_\kappa+\int_{C_\kappa^n(b)}U_b^2 {\rm d}v_\kappa.$
		\end{itemize}
		
	\end{theorem}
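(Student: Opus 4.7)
\emph{Proof plan.} The plan is to treat the two parts separately: part (i) will follow from an algebraic identity, while part (ii) will reduce to two pointwise rearrangement comparisons. Both rely on the change-of-variables \eqref{change-variables}, equimeasurability of the normalized rearrangement, and the estimates of Lemma~\ref{basic=J-H} and Proposition~\ref{f-hasonlitas}.

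For \emph{part (i)}, I would first note that $U_a$ and $U_b$ are spherical-cap symmetric. Writing $U_a(x) = F_a(d_\kappa(N,x))$ and applying the radial Laplacian on $\mathbb S_\kappa^n$ together with $\frac{d}{dr}V_\kappa(C_\kappa^n(r)) = n\omega_n(\sin(\sqrt\kappa r)/\sqrt\kappa)^{n-1}$ should give
\[
\Delta_\kappa U_a(x) = -\mathcal J\bigl(V_\kappa(C_\kappa^n(d_\kappa(N,x)))\bigr)\ \text{on}\ C_\kappa^n(a),
\]
and analogously $\Delta_\kappa U_b = -\mathcal H(V_\kappa(C_\kappa^n(d_\kappa(N,\cdot))))$ on $C_\kappa^n(b)$. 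Then \eqref{change-variables} will recast the left-hand side of (i) as $\int_0^{V_\kappa(\Omega_+^\star)}\mathcal J^2(s)\,ds + \int_0^{V_\kappa(\Omega_-^\star)}\mathcal H^2(s)\,ds$. The crucial step is to invoke Proposition~\ref{f-hasonlitas}(iii), which forces $\mathcal J^2(s) = ((\Delta_g u)_-^*(s))^2 + ((\Delta_g u)_+^*(V_\kappa(\Omega^\star)-s))^2$ and the analogous identity for $\mathcal H^2$; a substitution in the tail pieces should then concatenate the intervals into $[0,V_\kappa(\Omega^\star)]$ and collapse the sum to $\int_0^{V_\kappa(\Omega^\star)}[((\Delta_g u)_-^*)^2 + ((\Delta_g u)_+^*)^2]\,ds$, which by equimeasurability equals $\frac{V_\kappa(\mathbb S_\kappa^n)}{V_g(M)}\int_\Omega (\Delta_g u)^2\,dv_g$.

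For \emph{part (ii)}, I would reduce to the pointwise comparisons $(u_+)^\star \leq U_a$ on $C_\kappa^n(a)$ and $(u_-)^\star \leq U_b$ on $C_\kappa^n(b)$; squaring, integrating and equimeasurability will then deliver (ii). To establish the $u_+$-bound via the Talenti template, set $\mu(t) = V_g(\{u>t\})$ and $s(t) = V_\kappa(\mathbb S_\kappa^n)j(t)$, and combine the coarea identity $-\mu'(t) = \int_{u=t}|\nabla u|^{-1}d\sigma_g$, Cauchy--Schwarz, and the divergence relation $\int_{u=t}|\nabla u|\,d\sigma_g = -\int_{u>t}\Delta_g u\,dv_g$ to obtain
\[
(-\mu'(t))\Bigl(-\int_{u>t}\Delta_g u\,dv_g\Bigr) \geq \mathrm{Per}_g(\{u>t\})^2.
\]
Next, L\'evy--Gromov (available since $\mathrm{Ric}_{(M,g)}\geq(n-1)\kappa$) will give $\mathrm{Per}_g(\{u>t\}) \geq \tfrac{V_g(M)}{V_\kappa(\mathbb S_\kappa^n)}\,n\omega_n(\sin(\sqrt\kappa r(t))/\sqrt\kappa)^{n-1}$, where $r(t)$ is the radius of the cap of volume $s(t)$, while Proposition~\ref{f-hasonlitas}(i) will yield $-\int_{u>t}\Delta_g u\,dv_g \leq \tfrac{V_g(M)}{V_\kappa(\mathbb S_\kappa^n)}\int_0^{s(t)}\mathcal J(\sigma)\,d\sigma$. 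Chaining these bounds and rewriting everything through the radial profile $\widetilde v(r)$ of $(u_+)^\star$ should collapse the inequality to $-\widetilde v'(r)\leq -F_a'(r)$. Integrating from $r$ to $a$ with $\widetilde v(a) = F_a(a) = 0$ then yields $(u_+)^\star \leq U_a$, and the argument for $(u_-)^\star \leq U_b$ is verbatim after swapping $\mathcal J\leftrightarrow\mathcal H$ and invoking Proposition~\ref{f-hasonlitas}(ii).

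The main obstacle will be the L\'evy--Gromov step: it is precisely the substitute for the Euclidean (or generalized Cartan--Hadamard) isoperimetric inequality that underlies the original Ashbaugh--Benguria--Nadirashvili--Talenti decomposition, and it is what forces the positive Ricci hypothesis. Everything else---coarea, Cauchy--Schwarz, the Hardy--Littlewood-type bounds of Proposition~\ref{f-hasonlitas}, and the disjoint-support property (iii) of the same proposition that makes (i) come out as an equality rather than an inequality---should slot in mechanically, provided that the normalization factors $V_g(M)/V_\kappa(\mathbb S_\kappa^n)$ are tracked carefully throughout.
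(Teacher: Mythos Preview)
Your proposal is correct and follows essentially the same route as the paper: part (i) via the identities $-\Delta_\kappa U_a=\mathcal J(V_\kappa(C_\kappa^n(d_\kappa(N,\cdot))))$, $-\Delta_\kappa U_b=\mathcal H(\cdot)$, the change of variables \eqref{change-variables}, the disjoint-support property Proposition~\ref{f-hasonlitas}(iii), and equimeasurability; part (ii) via coarea/Cauchy--Schwarz, the divergence theorem, the L\'evy--Gromov isoperimetric inequality, Proposition~\ref{f-hasonlitas}(i)--(ii), and integration in $t$ to obtain the pointwise bounds $u_\pm^\star\le U_{a}, U_{b}$. The paper's presentation differs only cosmetically (it runs Cauchy's inequality on thickened level sets before passing to the limit rather than invoking the coarea formula directly), and the normalization bookkeeping you flag is handled exactly as you indicate.
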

	 
\noindent 	{\it Proof.} (i) By construction, both $U_a$ and $U_b$ in \eqref{v-definit} and \eqref{w-definit} are spherical cap symmetric functions; according to  \eqref{Laplace-operator}, a direct computation shows that $U_a$ and $U_b$ solve the problems 
 \begin{equation}\label{CP-problem-v}
	\left\{ \begin{array}{rclll}
	-\Delta_\kappa U_a(x)&~=&~\mathcal J(V_\kappa(C_\kappa^n((d_\kappa(N,x)))) &\mbox{in} &  C_\kappa^n(a)=\Omega_+^\star; \\
	U_a&~=&~0  &\mbox{on} &  \partial C_\kappa^n(a),
	\end{array}\right.
	\end{equation}
	and 
	\begin{equation}\label{CP-problem-w}
	\left\{ \begin{array}{rllll}
	-\Delta_\kappa U_b(x)&~=&~\mathcal H(V_\kappa(C_\kappa^n((d_\kappa(N,x)))) &\mbox{in} &  C_\kappa^n(b)=\Omega_-^\star; \\
	U_b&~=&~0  &\mbox{on} &  \partial C_\kappa^n(b),
	\end{array}\right.
	\end{equation}
 respectively. On one hand, by (\ref{CP-problem-v}), (\ref{CP-problem-w}), \eqref{change-variables}, the definition of  $\mathcal H$ and by the fact that $V_\kappa(\Omega^\star)=V_\kappa(\Omega^\star_+)+V_\kappa(\Omega^\star_-)$, we obtain that
	\begin{align*}
		I&\coloneqq \int_{C^n_\kappa(a)} (\Delta_\kappa U_a)^2{\rm d}v_\kappa+ \int_{C_\kappa^n(b)} (\Delta_\kappa U_b)^2{\rm d}v_\kappa\\&=\int_{C_\kappa^n(a)} \mathcal J^2(V_\kappa(C_\kappa^n(d_\kappa(N,x)))){\rm d}v_\kappa(x)+ \int_{C^n_\kappa(b)} \mathcal H^2(V_\kappa(C_\kappa^n(d_\kappa(N,x)))){\rm d}v_\kappa(x)\\&= \int_{0}^{V_\kappa(\Omega_+^\star)}\mathcal  J^2(s){\rm d}s+ \int_{0}^{V_\kappa(\Omega_-^\star)}\mathcal  H^2(s){\rm d}s\\&=\int_{0}^{V_\kappa(\Omega^\star)}\mathcal  J^2(s){\rm d}s.
	\end{align*}
	On the other hand, by the definition of the function $\mathcal J$, Proposition \ref{f-hasonlitas}/(iii), relation \eqref{change-variables} and Lemma \ref{lemma-1} (with  $S=W\coloneqq \Omega_\pm^\Delta$ and $U\coloneqq (\Delta_g u)_\pm$), we have that
	\begin{align*}
	I&=\int_{0}^{V_\kappa(\Omega^\star)} \mathcal J^2(s){\rm d}s
	\\&=\int_{0}^{V_\kappa(\Omega^\star)} \left[(\Delta_g u)^*_-(s)^2+(\Delta_g u)^*_+(V_\kappa(\Omega^\star)-s)^2 -2(\Delta_g u)^*_-(s)(\Delta_g u)^*_+(V_\kappa(\Omega^\star)-s)\right]{\rm d}s
	\\
	&=\int_{0}^{V_\kappa(\Omega^\star)} \left[(\Delta_g u)^*_-(s)^2+(\Delta_g u)^*_+(V_\kappa(\Omega^\star)-s)^2\right]{\rm d}s\\&=
	\int_{0}^{V_\kappa(\Omega^\star)} \left[(\Delta_g u)^*_-(s)^2+(\Delta_g u)^*_+(s)^2\right]{\rm d}s\\&=
		\int_{\Omega^\star} \left[(\Delta_g u)^*_-(V_\kappa(C_\kappa^n(d_\kappa(N,x))))^2+(\Delta_g u)^*_+(V_\kappa(C_\kappa^n(d_\kappa(N,x))))^2\right]{\rm d}v_\kappa(x)\\&=\int_{\Omega^\star} \left[(\Delta_g u)^\star_-(x)^2+(\Delta_g u)^\star_+(x)^2\right]{\rm d}v_\kappa(x)\\&=\int_{{(\Omega_-^\Delta)}^\star} (\Delta_g u)^\star_-(x)^2{\rm d}v_\kappa(x)+\int_{{(\Omega_+^\Delta)}^\star}(\Delta_g u)^\star_+(x)^2{\rm d}v_\kappa(x)
		\\&=\frac{V_{\kappa}(\mathbb S_\kappa^n)}{V_g(M)}\int_{{\Omega_-^\Delta}} (\Delta_g u)^2_-(x){\rm d}v_g(x)+\frac{V_{\kappa}(\mathbb S_\kappa^n)}{V_g(M)}\int_{{\Omega_+^\Delta}} (\Delta_g u)^2_+(x){\rm d}v_g(x)
		\\&=\frac{V_{\kappa}(\mathbb S_\kappa^n)}{V_g(M)}\int_{\Omega} (\Delta_g u)^2(x){\rm d}v_g(x),
	\end{align*}
	which concludes the proof of (i). 

	(ii) For every $t\in [0,T_u^+]$, with $$T_u^+=\sup_{x\in \Omega_+}u_+(x),$$ we  introduce  the sets 
	$$
	\Lambda_t\coloneqq\partial(\{x\in \Omega: u_+(x)>t \})\ \  {\rm and} \  \Lambda_t^\star\coloneqq\partial(\{x\in \mathbb S_\kappa^n: u_+^\star(x)>t \}).  
	$$
	For any fixed $\varepsilon>0$, Cauchy's inequality implies that
	\begin{align*}
	\left(\frac{1}{\varepsilon}\int_{t<u(x)\leq t+\varepsilon}|\nabla_g u(x)|_g{\rm d}v_g(x)\right)^2&\leq \frac{V_g(u^{-1}((t,t+\varepsilon]))}{\varepsilon}\frac{1}{\varepsilon}\int_{t<u(x)\leq t+\varepsilon}|\nabla_g u(x)|_g^2{\rm d}v_g(x)\\&=V_g(M)\frac{j(t)-j(t+\varepsilon)}{\varepsilon}\frac{1}{\varepsilon}\int_{t<u(x)\leq t+\varepsilon}|\nabla_g u(x)|_g^2{\rm d}v_g(x).
	\end{align*}
Since $j$ is non-increasing, by letting $\varepsilon\to 0$, the co-area formula (see Chavel \cite[p.86]{Chavel}) and the latter inequality imply that 
$$
	\ds{\mathcal P}_g^2(\Lambda_t)\leq -j'(t)V_g(M)\int_{\Lambda_t}|\nabla_g u|{\rm d}\mathcal H_{n-1}\ \ {\rm for\ a.e.}\ 	t\in [0,T_u^+],
$$
	where ${\mathcal P}_g(\Lambda_t)$ denotes the induced perimeter of the set $\{u_+>t\}\subset M$, while $\mathcal H_{n-1}$ stands for the $(n-1)$-dimensional Hausdorff measure. Since the outward normal vector at  $x\in \Lambda_t$ to  $\{u>t\}$ is $-\frac{\nabla_g u(x)}{|\nabla_g u(x)|_g},$  the divergence theorem implies up to a negligible set  that 
	$$\int_{\Lambda_t}|\nabla_g u|_g{\rm d}\mathcal H_{n-1}=-\int_{\{ u_+>t\}}\Delta_gu{\rm d}v_g=-\int_{\{u>t\}}\Delta_gu{\rm d}v_g.$$
	Therefore,  for a.e.\ $t\in [0,T_u^+]$, we obtain the inequality 
	\begin{equation}\label{peri-1}
	\ds{\mathcal P}^2_g(\Lambda_t)\leq V_g(M)j'(t)\int_{\{u>t\}}\Delta_gu{\rm d}v_g.
	\end{equation}
	By relation \eqref{szimmetrizacio-U}, the L\'evy--Gromov isoperimetric inequality implies that for every $t\in [0,T_u^+]$ one has
\begin{equation}\label{Levy--Gromov-inequal}
	\frac{{\mathcal P}_\kappa(\Lambda_t^\star)}{V_\kappa(\mathbb S_\kappa^n)}\leq \frac{{\mathcal P}_g(\Lambda_t)}{V_g(M)},\ \forall 	t\in [0,T_u^+],
\end{equation}
	where $\mathcal P_\kappa(\Lambda_t^\star)$ denotes the  perimeter of the set $\{u_+^\star>t\}\subset \mathbb S_\kappa^n$. 
	
	Combining inequalities \eqref{peri-1}-\eqref{Levy--Gromov-inequal}  with  Proposition \ref{f-hasonlitas}/(i), we have  that
	\begin{equation}\label{ppp}
	{\mathcal P}^2_\kappa(\Lambda_t^\star)\leq -{V_{\kappa}(\mathbb S_\kappa^n)}j'(t)\displaystyle\int_0^{V_{\kappa}(\mathbb S_\kappa^n)j(t)} \mathcal J(s){\rm d}s\ \ {\rm for\ a.e.}\ t\in [0,T_u^+].
	\end{equation}
We have seen in the proof of 	Proposition \ref{f-hasonlitas} that $$\{u_+>t \}^\star=C_\kappa^n(a_t) \ \ {\rm and}\ \   V_\kappa(C_\kappa^n(a_t))=V_{\kappa}(\mathbb S_\kappa^n)j(t)$$ for some $a_t\geq 0$; note that $t\mapsto a_t$ is decreasing on $[0,T_u^+]$. Since for sufficiently small $\varepsilon>0$ one has that $$\{x:\in \mathbb S_\kappa^n: d_\kappa (x,C_\kappa^n(a_t))<\varepsilon\}=C_\kappa^n(a_t+\varepsilon),$$ the Minkowski content of $C_\kappa^n(a_t)$ and  \eqref{terfogat-cap} ensure that 
$$V_{\kappa}(\mathbb S_\kappa^n)j'(t)=n\omega_n\left(\frac{\sin(\sqrt{\kappa}a_t)}{\sqrt{\kappa}}\right)^{n-1}a_t'={\mathcal P}_\kappa(\Lambda_t^\star)a_t'\ \ {\rm for\ a.e.}\ t\in [0,T_u^+].$$
Combining this relation with \eqref{ppp}, we infer that 
	$$1\leq -\frac{1}{n\omega_n}\left(\frac{\sin(\sqrt{\kappa}a_t)}{\sqrt{\kappa}}\right)^{1-n}a_t'\int_0^{V_\kappa(C_\kappa^n(a_t))} \mathcal J(s){\rm d}s\ \ {\rm for\ a.e.}\ t\in [0,T_u^+].$$
For every fixed $\eta\in[0,T_u^+]$, an integration 	
	of the latter inequality yields that
	\begin{equation}\label{utolso-inequality}
		\eta\leq -\frac{1}{n\omega_n}\int_0^\eta\left(\frac{\sin(\sqrt{\kappa}a_t)}{\sqrt{\kappa}}\right)^{1-n}a_t'\left(\int_0^{V_\kappa(C_\kappa^n(a_t))} \mathcal J(s){\rm d}s\right) {\rm d}t.
	\end{equation}
Arbitrary fixing the point $x\in C_\kappa^n(a)=\Omega_+^\star$, there exists a unique $\eta\in [0,T_u^+]$ such that $d_\kappa(N,x)=a_\eta$. By \eqref{u-plusz-minusz} and the monotonicity of $t\mapsto a_t$ on $[0,T_u^+]$, it follows that 
	\begin{align*}
	u_+^\star(x)&=\sup\{t>0:x\in \{u_+>t\}^\star\}=\sup\{t>0:x\in C_\kappa^n(a_t)\}\\&=\sup\{t>0:d_\kappa(N,x)<a_t\}=\sup\{t>0:a_\eta<a_t\}\\&=\eta.
	\end{align*}
Performing the change $\rho=a_t$ of variables at the right hand side of \eqref{utolso-inequality}, and taking into account that $\lim_{t\to 0}a_t=a$ (with $a\geq 0$  by \eqref{a-b}),
the latter inequality together with (\ref{v-definit}) implies that 	
	\begin{equation}\label{1-compar}
	u_+^\star\leq U_a\ \ {\rm in}\ \  C_\kappa^n(a)=\Omega_+^\star;
	\end{equation}
see also Figure \ref{3-abra}. In a similar manner, we have that	\begin{equation}\label{2-compar}
u_-^\star\leq U_b\ \ {\rm in}\ \  C_\kappa^n(b)=\Omega_-^\star,
\end{equation}
by applying analogously to \eqref{peri-1} that
\begin{equation}\label{peri-2}
\ds{\mathcal P}^2_g(\Pi_t)\leq V_g(M)h'(t)\int_{\{u<-t\}}\Delta_gu{\rm d}v_g \ \ {\rm for\ a.e.}\ t\in [0,T_u^-],
\end{equation}
where		$
	\Pi_t\coloneqq\partial(\{x\in \Omega: -u(x)>t \})$.

		\begin{figure}[h!]
		\centering
		\includegraphics[scale=1.0]{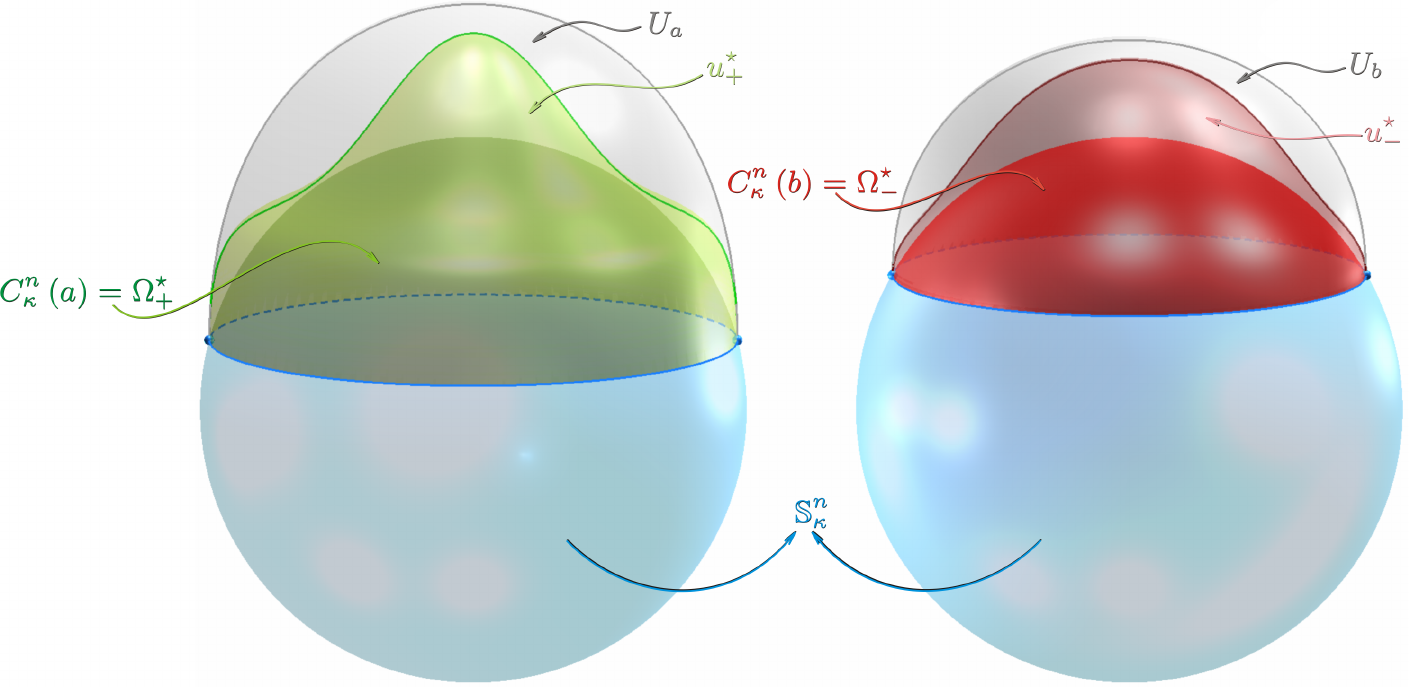}
		\caption{Spherical cap symmetric functions $U_a,u_+^\star:\Omega_+^\star\to (0,\infty)$ and  $U_b,u_-^\star:\Omega_-^\star\to (0,\infty)$ which verify the inequalities \eqref{1-compar}--\eqref{2-compar}. The functions  $u_\pm^\star: \Omega_\pm^\star \to (0,\infty)$ arise from Figure  \ref{2-abra}/(c), which are the normalized rearrangements of $u_\pm:\Omega_\pm\to (0,\infty)$  on the spherical caps $\Omega_\pm^\star\subset \mathbb S^n_\kappa$.
		}\label{3-abra}
	\end{figure}

 Inequalities \eqref{1-compar}--\eqref{2-compar}  and Lemma \ref{lemma-1} (with $S=W\coloneqq  \Omega_\pm$ and $U\coloneqq u_\pm^2$) imply that
	\begin{align*}
		\int_{\Omega}u^2 {\rm d}v_g&= \int_{\Omega_+}u_+^2 {\rm d}v_g+\int_{\Omega_-}u_-^2 {\rm d}v_g\\&=\frac{V_g(M)}{V_{\kappa}(\mathbb S_\kappa^n)}\int_{\Omega_+^\star}(u_+^\star)^2 {\rm d}v_\kappa+\frac{V_g(M)}{V_{\kappa}(\mathbb S_\kappa^n)}\int_{\Omega_-^\star}(u_-^\star)^2 {\rm d}v_\kappa\\&\leq\frac{V_g(M)}{V_{\kappa}(\mathbb S_\kappa^n)}\left(
		\int_{C_\kappa^n(a)}U_a^2 {\rm d}v_\kappa+\int_{C_\kappa^n(b)}U_b^2 {\rm d}v_\kappa\right),
	\end{align*}
	which completes the proof. 	\hfill $\square$
	
	\begin{remark}\rm We conclude this subsection with an observation concerning a joint boundary condition involving the functions $U_a$ and $U_b$.  In fact, we have that \begin{equation}\label{b-v-c-0}
			(\sin(\sqrt{\kappa}a))^{n-1}\frac{{\rm d} U_a}{{\rm d} \theta}(\sqrt{\kappa}a)=(\sin(\sqrt{\kappa}b))^{n-1}\frac{{\rm d} U_b}{{\rm d} \theta}(\sqrt{\kappa}b),
		\end{equation}
		where we explored the  spherical cap symmetry of $U_a$ and $U_b,$ by identifying the point $x=x(\theta,\xi)\in \mathbb S_n^\kappa$ with the angle $\theta=\sqrt{\kappa}d_\kappa(N,x)\in (0,\pi).$ Applying Lemma \ref{basic=J-H}/(iii) to problems \eqref{CP-problem-v}--\eqref{CP-problem-w}, we first obtain that
		\begin{equation}\label{utolso-ebben-a-fejezetben}
			\int_{C_\kappa^n(a)} \Delta_\kappa U_a(x){\rm d}v_\kappa(x)= \int_{C_\kappa^n(b)} \Delta_\kappa U_b(x){\rm d}v_\kappa(x).
		\end{equation}
		Moreover, by \eqref{Laplace-operator} and the density of the measure ${\rm d}v_\kappa$ given by relation  \eqref{terfogat-cap}, the spherical cap symmetry of $U_a$ and $U_b$ implies the required relation \eqref{b-v-c-0}. 
		
	\end{remark}	
	\vspace*{0.5cm}
	
	\subsection{Coupled minimization on spherical caps}


	Given an open set $S\subset \mathbb S_\kappa^n$, we introduce the space $$\mathcal W(S)\coloneqq W_0^{1,2}(S)\cap W^{2,2}(S).$$ 
	Based on  \eqref{b-v-c-0}, we will consider the boundary condition 
	  \begin{equation}\label{b-v-c}
	  	(\sin(\sqrt{\kappa}a))^{n-1}\frac{{\rm d} u_a}{{\rm d} \theta}(\sqrt{\kappa}a)=(\sin(\sqrt{\kappa}b))^{n-1}\frac{{\rm d} u_b}{{\rm d} \theta}(\sqrt{\kappa}b),
	  \end{equation}
  for two spherical cap symmetric functions $u_a:C_\kappa^n(a)\to (0,\infty)$ and $u_b:C_\kappa^n(b)\to (0,\infty)$. 
	For further use, let 
%
	$$\mathcal W_{a,b}(\Omega^\star)\coloneqq \left\{ (u_a,u_b)\in\mathcal W(C_\kappa^n(a))\times \mathcal W(C_\kappa^n(b)):\begin{array}{lll}
	u_a,u_b\ {\rm are\ spherical \ cap\ symmetric}  \\
{\rm functions\ that\ verify\ relation}\ \eqref{b-v-c}
	\end{array}\right\},
	$$	where 
	\begin{equation}\label{terfogatok-ujra}
	V_\kappa(C_\kappa^n(a))+V_\kappa(C_\kappa^n(b))=V_\kappa(\Omega^\star).
	\end{equation}	

 Using the aforementioned notations and Theorem \ref{talenti-result}, one can establish the connection between the fundamental tone of a set $\Omega\subset M$ and a coupled minimization problem on $\mathbb S_\kappa^n;$  namely, if  $a,b\geq 0$ are numbers such that \eqref{terfogatok-ujra} holds, then  
 \begin{equation}\label{atteres-sokasagrol-gombre-0}
 \Gamma_g(\Omega)\geq \inf_{ (u_a,u_b)\in\mathcal W_{a,b}(\Omega^\star)\setminus\{(0,0)\}}\frac{\ds\int_{C_\kappa^n(a)} (\Delta_\kappa u_a)^2{\rm d}v_\kappa+ \int_{C_\kappa^n(b)} (\Delta_\kappa u_b)^2{\rm d}v_\kappa}{\ds\int_{C_\kappa^n(a)}u_a^2 {\rm d}v_\kappa+\int_{C_\kappa^n(b)}u_b^2 {\rm d}v_\kappa}.
 \end{equation}

	
\noindent 
For further use, we recall the Gaussian hypergeometric functions 
\begin{equation}\label{Ferrers-fct}
\mathcal F_{\pm}(t,\lambda,\kappa,n)\coloneqq {_2F}_1\left(\frac{1}{2}-\Lambda_{\pm}(\lambda),\frac{1}{2}+\Lambda_{\pm}(\lambda);\frac{n}{2};t\right), \ t\in (0,1),
\end{equation}
where
\begin{equation}\label{lambda-pm}
\Lambda_{\pm}(\lambda)=\Lambda_{\pm}(\lambda,\kappa,n)\coloneqq \sqrt{\frac{(n-1)^2}{4}\pm \frac{{\lambda^2}}{\kappa}}\in \mathbb C.
\end{equation}
Given $\lambda>0$, we also need the cross-product  of the Gaussian hypergeometric functions, i.e., 
\begin{equation}\label{K-definicio}
 \mathcal K_{\kappa,n}(t,\lambda)\coloneqq\frac{\mathcal F_{-}'(t,\lambda,\kappa,n)}{\mathcal F_{-}(t,\lambda,\kappa,n)}-\frac{\mathcal F_{+}'(t,\lambda,\kappa,n)}{\mathcal F_{+}(t,\lambda,\kappa,n)},
\end{equation}
defined outside of the zeros of ${\mathcal F_{+}(\cdot,\cdot,\kappa,n)}$, see Proposition \ref{Ferrers-basic-lemma-1}/(iii)--(iv); here we use  the notation $\mathcal F_{\pm}'(t,\lambda,\kappa,n)\coloneqq\dfrac{\partial}{\partial t}\mathcal F_{\pm}(t,\lambda,\kappa,n).$
Based on \eqref{atteres-sokasagrol-gombre-0}, a crucial result in our  investigation can be formulated as follows. 
	
\begin{theorem}\label{fundametal-tones-double}
		Let $(M,g)$ be a compact $n$-dimensional Riemannian manifold of Ricci curvature  ${\sf Ric}_{(M,g)}\geq (n-1)\kappa>0$, the open domain $\Omega\subset M$ with its normalized rearrangement $\Omega^\star\subset \mathbb S^n_\kappa$. If  $a,b\geq 0$ are numbers such that \eqref{terfogatok-ujra} holds, then  
		\begin{equation}\label{atteres-sokasagrol-gombre-1}
		\Lambda_g(\Omega)\geq \Lambda(\kappa,n,a,b)=:\lambda_{\kappa,n}(\alpha,\beta)^4,
		\end{equation}
			 where 
			 \begin{equation}\label{alfa-beta}
			 \alpha\coloneqq \sin^2\left(\frac{\sqrt{\kappa} a}{2}\right),\ \beta\coloneqq \sin^2\left(\frac{\sqrt{\kappa} b}{2}\right),
			 \end{equation}
			 and 			 
			 $\lambda\coloneqq \lambda_{\kappa,n}(\alpha,\beta)>0$ is the smallest positive zero of the equation 
			 \begin{equation}\label{cross-product-equation}
			 		 (1-\alpha)^{\frac{n}{2}}\alpha^{\frac{n}{2}}\mathcal K_{\kappa,n}(\alpha,\lambda)+(1-\beta)^{\frac{n}{2}}\beta^{\frac{n}{2}}\mathcal K_{\kappa,n}(\beta,\lambda)=0.
			 \end{equation}	 
\end{theorem}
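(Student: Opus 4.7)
\textbf{Proof plan for Theorem \ref{fundametal-tones-double}.}
Relation (\ref{atteres-sokasagrol-gombre-0}), already established in the previous subsection, reduces the problem to showing that the infimum of the coupled Rayleigh quotient over $\mathcal W_{a,b}(\Omega^\star)$ equals $\lambda_{\kappa,n}(\alpha,\beta)^4$. Since the constraint set is weakly closed in $\mathcal W(C_\kappa^n(a))\times\mathcal W(C_\kappa^n(b))$ and the quotient is weakly lower semicontinuous, a standard compactness argument yields a minimizing pair $(u_a,u_b)$, spherical cap symmetric by construction, with eigenvalue $\lambda^4$. Computing the first variation, integrating $\int \Delta_\kappa u_a\,\Delta_\kappa\phi$ by parts twice, and using that admissible variations $(\phi,\psi)$ satisfy $s_a^{n-1}\phi'(\sqrt{\kappa}a)=s_b^{n-1}\psi'(\sqrt{\kappa}b)$ (with $s_a=\sin\sqrt{\kappa}a$, $s_b=\sin\sqrt{\kappa}b$), I obtain the bulk equations $\Delta_\kappa^2 u_a=\lambda^4u_a$, $\Delta_\kappa^2 u_b=\lambda^4u_b$, the Dirichlet conditions inherited from $\mathcal W$, the prescribed coupling (\ref{b-v-c}), and the natural boundary condition
$$\Delta_\kappa u_a(\sqrt{\kappa}a)+\Delta_\kappa u_b(\sqrt{\kappa}b)=0,$$
which emerges because only the weighted combination $s_a^{n-1}\phi'\pm s_b^{n-1}\psi'$ is constrained.

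Next, I use the factorization $\Delta_\kappa^2-\lambda^4=(\Delta_\kappa-\lambda^2)(\Delta_\kappa+\lambda^2)$ to compute the spherical cap symmetric solutions explicitly. The substitution $t=\sin^2(\theta/2)$ transforms $(\Delta_\kappa\pm\lambda^2)u=0$ into the hypergeometric equation (\ref{hyper-ODE}) with $c=n/2$ and parameters $(n-1)/2\mp\Lambda_\pm(\lambda)$; the unique (up to scale) solution regular at the pole $t=0$ is, after applying the Euler--Pfaff identity (\ref{Olver-1}), exactly $(1-t)^{1-n/2}\mathcal F_\mp(t,\lambda,\kappa,n)$. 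Hence the general solution of $\Delta_\kappa^2u_a=\lambda^4u_a$ regular at the pole is
$$u_a(\theta)=(1-t)^{1-n/2}\bigl[A_a\,\mathcal F_+(t,\lambda,\kappa,n)+B_a\,\mathcal F_-(t,\lambda,\kappa,n)\bigr],\qquad t=\sin^2(\theta/2),$$
and similarly $u_b$ with coefficients $(A_b,B_b)$. Since each summand is an eigenfunction of $\Delta_\kappa$ itself, at the boundary $t=\alpha$ one gets the clean formula $\Delta_\kappa u_a(\sqrt{\kappa}a)=\lambda^2(1-\alpha)^{1-n/2}[B_a\mathcal F_-(\alpha)-A_a\mathcal F_+(\alpha)]$, which after invoking Dirichlet reduces to $-2\lambda^2(1-\alpha)^{1-n/2}A_a\mathcal F_+(\alpha)$.

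The four boundary conditions collapse the four-parameter family to a $2\times 2$ system in $(A_a,A_b)$. The two Dirichlet conditions give $B_a=-A_a\mathcal F_+(\alpha)/\mathcal F_-(\alpha)$ and the $b$-analogue. The natural boundary condition becomes
$$(1-\alpha)^{1-n/2}A_a\mathcal F_+(\alpha)+(1-\beta)^{1-n/2}A_b\mathcal F_+(\beta)=0.$$
For the coupling (\ref{b-v-c}), differentiating $u_a$, using Dirichlet to kill the contribution from $\frac{d}{dt}(1-t)^{1-n/2}$, and invoking the log-derivative identity built into (\ref{K-definicio}) yields
$$s_a^{n-1}\frac{du_a}{d\theta}(\sqrt{\kappa}a)=-2^{n-1}\alpha^{n/2}(1-\alpha)\,A_a\mathcal F_+(\alpha)\,\mathcal K_{\kappa,n}(\alpha,\lambda),$$
with the analogous expression at $b$. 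Substituting the natural condition into the coupling relation to eliminate $A_a\mathcal F_+(\alpha)$ and $A_b\mathcal F_+(\beta)$, the factors $(1-\alpha)^{n/2}(1-\beta)^{n/2}$ emerge and the compatibility equation becomes precisely (\ref{cross-product-equation}). Existence of a nontrivial minimizer forces $\lambda\geq\lambda_{\kappa,n}(\alpha,\beta)$, giving the claimed inequality.

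The main obstacle I anticipate is confirming that the \emph{smallest} positive zero $\lambda_{\kappa,n}(\alpha,\beta)$ of (\ref{cross-product-equation}) corresponds to the actual infimum of the Rayleigh quotient rather than to some higher eigenvalue: this requires excluding spurious roots introduced where $\mathcal F_+(\alpha)$ or $\mathcal F_+(\beta)$ vanishes (controlled via Proposition \ref{Ferrers-basic-lemma-1}(iii)--(iv)) and handling the singular limits $\alpha\to 0$ or $\beta\to 0$ (where the corresponding cap degenerates and the system reduces to the single clamped cap problem) by a continuity argument in the hypergeometric parameters.
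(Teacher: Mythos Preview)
Your proposal is correct and follows essentially the same route as the paper: existence of a minimizer for the coupled quotient, derivation of the Euler--Lagrange system with the natural transmission condition $\Delta_\kappa u_a(\sqrt{\kappa}a)+\Delta_\kappa u_b(\sqrt{\kappa}b)=0$, explicit hypergeometric representation of the regular solutions via the substitution $t=\sin^2(\theta/2)$, and reduction of the four boundary conditions to the compatibility equation \eqref{cross-product-equation}. The only organizational difference is that the paper packages the four conditions \eqref{1-bvc}--\eqref{4-bvc} into a $4\times4$ determinant and expands, whereas you first eliminate $B_a,B_b$ via Dirichlet and work with a $2\times2$ system; the algebra is the same and your path is arguably cleaner.

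Your stated concern about identifying the minimum with the \emph{smallest} positive zero is legitimate, but note that the paper does not argue this point explicitly either: it simply defines $\Lambda(\kappa,n,a,b)$ as the minimum in \eqref{Lambda-ertelmezes}, shows the minimizing $\lambda$ solves \eqref{cross-product-equation}, and leaves implicit that reversing the construction at any zero produces an admissible test pair (hence the minimum is the smallest zero). So you are not missing anything the paper supplies; if you want to be thorough you can add one sentence noting that each zero of \eqref{cross-product-equation} yields a nontrivial pair in $\mathcal W_{a,b}(\Omega^\star)$ with Rayleigh quotient $\lambda^4$, which forces the minimizer's eigenvalue to be the smallest such.
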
	
	
\noindent	{\it Proof.}  
	Let $a,b\geq 0$ be real numbers  that verify \eqref{terfogatok-ujra}. We note that the infimum in \eqref{atteres-sokasagrol-gombre-0} is a minimum; this fact can be stated, by using a similar argument as in the flat and negatively curved cases studied by Ashbaugh and Benguria \cite{A-B} and Krist\'aly \cite{Kristaly-Adv-math}, respectively. 
	 Accordingly, the value  
	\begin{equation}\label{Lambda-ertelmezes}
		\Lambda(\kappa,n,a,b)\coloneqq \min_{ (u_a,u_b)\in\mathcal W_{a,b}(\Omega^\star)\setminus\{(0,0)\}}\frac{\ds\int_{C_\kappa^n(a)} (\Delta_\kappa u_a)^2{\rm d}v_\kappa+ \int_{C_\kappa^n(b)} (\Delta_\kappa u_b)^2{\rm d}v_\kappa}{\ds\int_{C_\kappa^n(a)}u_a^2 {\rm d}v_\kappa+\int_{C_\kappa^n(b)}u_b^2 {\rm d}v_\kappa}
	\end{equation}
	is achieved by a pair of functions $(u_a,u_b)\in\mathcal W_{a,b}(\Omega^\star)\setminus\{(0,0)\}$. For simplicity of notation, let $\lambda\coloneqq \lambda_{\kappa,n}(\alpha,\beta)>0$ with $\lambda_{\kappa,n}(\alpha,\beta)^4=\Lambda(\kappa,n,a,b).$
	The Euler--Lagrange equation implies that  
	\begin{align}\label{Euler-Lagrange}
	0&=\int_{C_\kappa^n(a)}(\Delta_\kappa u_a\Delta_\kappa \phi -\lambda^4  u_a \phi){\rm d}v_\kappa + \int_{C_\kappa^n(b)}(\Delta_\kappa u_b\Delta_\kappa \psi -\lambda^4  u_b \psi){\rm d}v_\kappa,
	\end{align}
	for every pair of functions $(\phi,\psi)\in\mathcal W_{a,b}(\Omega^\star)$; in particular, they verify the boundary condition
	\begin{equation}\label{b-v-c-1}
	(\sin(\sqrt{\kappa}a))^{n-1}\frac{{\rm d} \phi}{{\rm d} \theta}(\sqrt{\kappa}a)=(\sin(\sqrt{\kappa}b))^{n-1}\frac{{\rm d} \psi}{{\rm d} \theta}(\sqrt{\kappa}b).
	\end{equation}
	Using the particular form of the measure ${\rm d}v_\kappa$ in  \eqref{terfogat-cap} and integrating by parts together with the fact that $$\phi(\sqrt{\kappa}a)=\psi(\sqrt{\kappa}b)=0,$$ we have that 
	\begin{align*}
	\int_{C_\kappa^n(a)}\Delta_\kappa u_a\Delta_\kappa \phi {\rm d}v_\kappa&= n\omega_n\kappa^{1-\frac{n}{2}}\Delta_\kappa u_a(\sqrt{\kappa}a)(\sin(\sqrt{\kappa}a))^{n-1}\frac{{\rm d} \phi}{{\rm d} \theta}(\sqrt{\kappa}a)+\int_{C_\kappa^n(a)}\Delta_\kappa^2 u_a \phi {\rm d}v_\kappa
	\end{align*}
	and 
	\begin{align*}
		\int_{C_\kappa^n(b)}\Delta_\kappa u_b\Delta_\kappa \psi {\rm d}v_\kappa&= n\omega_n\kappa^{1-\frac{n}{2}}\Delta_\kappa u_b(\sqrt{\kappa}b)(\sin(\sqrt{\kappa}b))^{n-1}\frac{{\rm d} \psi}{{\rm d} \theta}(\sqrt{\kappa}b)+\int_{C_\kappa^n(b)}\Delta_\kappa^2 u_b \psi {\rm d}v_\kappa.
	\end{align*}
	Due to the latter relations, equation \eqref{Euler-Lagrange} transforms into 
	\begin{align}\label{Euler-Lagrange-1}
	0=&\nonumber ~ n\omega_n\kappa^{1-\frac{n}{2}}\left(\Delta_\kappa u_a(\sqrt{\kappa}a)(\sin(\sqrt{\kappa}a))^{n-1}\frac{{\rm d} \phi}{{\rm d} \theta}(\sqrt{\kappa}a) + \Delta_\kappa u_b(\sqrt{\kappa}b)(\sin(\sqrt{\kappa}b))^{n-1}\frac{{\rm d} \psi}{{\rm d} \theta}(\sqrt{\kappa}b)\right)\\&+\int_{C_\kappa^n(a)}(\Delta_\kappa^2 u_a  -\lambda^4  u_a )\phi{\rm d}v_\kappa + \int_{C_\kappa^n(b)}(\Delta_\kappa^2 u_b  -\lambda^4  u_b )\psi{\rm d}v_\kappa.
	\end{align}
	In (\ref{Euler-Lagrange-1}) we may choose either $\psi=0$ and  $\phi\in \mathcal C^2_0(C_\kappa^n(a))$, or  $\phi=0$ and  $\psi\in \mathcal C^2_0(C_\kappa^n(b))$,  obtaining that 
	\begin{equation}\label{1-4-rendu}
	\Delta^2_\kappa u_a=\lambda^4 u_a \  \ {\rm in}\ \ C_\kappa^n(a),
	\end{equation}
	and
	\begin{equation}\label{2-4-rendu}
	\Delta^2_\kappa u_b=\lambda^4 u_b \  \ {\rm in}\ \ C_\kappa^n(b),
	\end{equation}
	respectively. Elliptic regularity theory shows that $u_a\in \mathcal C^\infty(C_\kappa^n(a))$ and $u_b\in \mathcal C^\infty(C_\kappa^n(b))$.
	Using again \eqref{Euler-Lagrange-1} and the boundary condition \eqref{b-v-c-1}, it turns out that 
	\begin{equation}\label{Delta-relation}
	\Delta_\kappa u_a(\sqrt{\kappa}a)+\Delta_\kappa u_b(\sqrt{\kappa}b)=0.
	\end{equation}
The standard theory of ordinary differential equations shows that two of the four linearly independent solutions to the fourth-order equation \eqref{1-4-rendu} have singularities at the North pole $N\in \mathbb S_\kappa^n$; thus, the general non-singular solution to \eqref{1-4-rendu}  has the form 
\begin{align*}
u_a(x)&\coloneqq u_a(\theta)\\&=\cos^{2-n}\left(\frac{\theta}{2}\right)\left[A_1\mathcal F_{+}\left(\sin^2\left(\frac{\theta}{2}\right),\lambda,\kappa,n\right) +A_2\mathcal F_{-}\left(\sin^2\left(\frac{\theta}{2}\right),\lambda,\kappa,n\right)\right],
\end{align*}
 for every $x=x(\theta,\xi)\in C_\kappa^n(a)$ and some $A_1,A_2\in \mathbb R$, where  we have used the notations \eqref{Ferrers-fct}. 
In a similar way, the non-singular solution to \eqref{2-4-rendu} in general form  is 
\begin{align*}
	u_b(x)&\coloneqq u_b(\theta)\\&=\cos^{2-n}\left(\frac{\theta}{2}\right)\left[B_1\mathcal F_{+}\left(\sin^2\left(\frac{\theta}{2}\right),\lambda,\kappa,n\right) +B_2\mathcal F_{-}\left(\sin^2\left(\frac{\theta}{2}\right),\lambda,\kappa,n\right)\right],
\end{align*}
for every $x=x(\theta,\xi)\in C_\kappa^n(b)$ and  some $B_1,B_2\in \mathbb R$.

Since $(u_a,u_b)\in\mathcal W_{a,b}(\Omega^\star)\setminus\{(0,0)\}$,  we have  that $u_a(\sqrt{\kappa}a)=u_b(\sqrt{\kappa}b)=0$; therefore, by \eqref{alfa-beta} we infer that
\begin{equation}\label{1-bvc}
A_1\mathcal F_{+}\left(\alpha,\lambda,\kappa,n\right) +A_2\mathcal F_{-}\left(\alpha,\lambda,\kappa,n\right)=0
\end{equation}
and
\begin{equation}\label{2-bvc}
B_1\mathcal F_{+}\left(\beta,\lambda,\kappa,n\right) +B_2\mathcal F_{-}\left(\beta,\lambda,\kappa,n\right)=0.
\end{equation}
Combining the boundary condition \eqref{b-v-c}  with \eqref{1-bvc}--\eqref{2-bvc}, it follows  that 
\begin{align}\label{3-bvc}
0=&~\nonumber (1-\alpha)\alpha^{\frac{n}{2}} \left(A_1\mathcal F_{+}'\left(\alpha,\lambda,\kappa,n\right) +A_2\mathcal F_{-}'\left(\alpha,\lambda,\kappa,n\right)\right)\\&- (1-\beta)\beta^{\frac{n}{2}}\left(B_1\mathcal F_{+}'\left(\beta,\lambda,\kappa,n\right) +B_2\mathcal F_{-}'\left(\beta,\lambda,\kappa,n\right)\right).
\end{align}
Since we have the pointwise equality 
$$\Delta_\kappa \left(\cos^{2-n}\left(\frac{\theta}{2}\right)\mathcal F_{\pm}\left(\sin^2\left(\frac{\theta}{2}\right),\lambda,\kappa,n)\right)\right)=\mp\lambda^2 \cos^{2-n}\left(\frac{\theta}{2}\right)\mathcal F_{\pm}\left(\sin^2\left(\frac{\theta}{2}\right),\lambda,\kappa,n\right),\ \theta\in (0,\pi),$$
by \eqref{Delta-relation} it follows that  
\begin{align}\label{4-bvc}
0=&~\nonumber (1-\alpha)^{1-\frac{n}{2}} \left(-A_1\mathcal F_{+}(\alpha,\lambda,\kappa,n) +A_2\mathcal F_{-}(\alpha,\lambda,\kappa,n)\right)\\&+ (1-\beta)^{1-\frac{n}{2}} \left(-B_1\mathcal F_{+}(\beta,\lambda,\kappa,n) +B_2\mathcal F_{-}(\beta,\lambda,\kappa,n)\right).
\end{align}
%
%
Since $A_1,A_2,B_1,B_2$ cannot be simultaneously zero, by using the notation $\mathcal F_\pm(\cdot)\coloneqq \mathcal F_{\pm}(\cdot,\lambda,\kappa,n)$,
  equations \eqref{1-bvc}--\eqref{4-bvc} necessarily imply that	
\begin{equation*}\label{determinant-sign-preserving-2}
{\rm det}\left[\begin{matrix}
\mathcal F_+(\alpha) & \mathcal F_-(\alpha) &0 & 0\\
0 & 0 &\mathcal F_+(\beta) & \mathcal F_-(\beta)\\
(1-\alpha)\alpha^{\frac{n}{2}}\mathcal F_+'(\alpha) & (1-\alpha)\alpha^{\frac{n}{2}}\mathcal F_-'(\alpha) &-(1-\beta)\beta^{\frac{n}{2}}\mathcal F_+'(\beta) & -(1-\beta)\beta^{\frac{n}{2}}\mathcal F_-'(\beta)\\
-(1-\alpha)^{1-\frac{n}{2}}\mathcal F_+(\alpha) & (1-\alpha)^{1-\frac{n}{2}}\mathcal F_-(\alpha) &-(1-\beta)^{1-\frac{n}{2}}\mathcal F_+(\beta) & (1-\beta)^{1-\frac{n}{2}}\mathcal F_-(\beta)
\end{matrix}\right]=0.
\end{equation*}
Expanding the determinant, we equivalently have	that 
	$$
	(1-\alpha)^{\frac{n}{2}}\alpha^{\frac{n}{2}}\left(\frac{\mathcal F'_-(\alpha)}{\mathcal F_-(\alpha)}-\frac{\mathcal F'_{+}(\alpha)}{\mathcal F_{+}(\alpha)}\right)+(1-\beta)^{\frac{n}{2}}\beta^{\frac{n}{2}}\left(\frac{\mathcal F'_-(\beta)}{\mathcal F_-(\beta)}-\frac{\mathcal F'_+(\beta)}{\mathcal F_+(\beta)}\right)=0,$$ 
	which is precisely equation \eqref{cross-product-equation}.
	 	\hfill $\square$\\

\section{Sharp spectral gaps on clamped spherical caps: proof of Theorem \ref{aszimptotak-CP-pozitiv}}\label{section-5}

In this section we prove Theorem \ref{aszimptotak-CP-pozitiv}, by establishing sharp growth estimates of the fundamental tone on the spherical cap $C_\kappa^n(L)$
in the two limit cases, i.e., when $L\to 0$ and $L\to \pi/\sqrt{\kappa},$  respectively. Before providing explicitly these 
estimates, we notice that the eigenfunctions on any spherical cap $C_\kappa^n(L)$ for the initial clamped problem \eqref{CP-problem-0} is of fixed sign, which follows by a Krein--Rutman argument and the sign-definite character of the solution to the Poisson-type biharmonic equation on $C_\kappa^n(L)$. 
%
%
By the proof of  Theorem \ref{fundametal-tones-double},
these spherical cap symmetric eigenfunctions on $C_\kappa^n(L)$  are of the form 
$$
	U_{c,L}(x)=c\cos^{2-n}\left(\frac{\theta}{2}\right)\left(\mathcal F_{+}\left(\sin^2\left(\frac{\theta}{2}\right),\lambda_L,\kappa,n\right) -\frac{{\mathcal F_{+}(\alpha_L,\lambda_L,\kappa,n)}}{{\mathcal F_{-}(\alpha_L,\lambda_L,\kappa,n)}}\mathcal F_{-}\left(\sin^2\left(\frac{\theta}{2}\right),\lambda_L,\kappa,n\right)\right),
$$
for every $ x=x(\theta,\xi)\in C_\kappa^n(L)$ and  $c\in \mathbb R\setminus \{0\}$, see Figure \ref{4-abra},  the value $\lambda_L\coloneqq\Lambda^{\frac{1}{4}}_{\kappa}(C_\kappa^n(L))$ being the first positive zero  of the equation
\begin{equation}\label{K-egyenlo-nulla}
	\mathcal K_{\kappa,n}(\alpha_L,\lambda)\coloneqq \frac{\mathcal F_{-}'(\alpha_L,\lambda,\kappa,n)}{\mathcal F_{-}(\alpha_L,\lambda,\kappa,n)}-\frac{\mathcal F_{+}'(\alpha_L,\lambda,\kappa,n)}{\mathcal F_{+}(\alpha_L,\lambda,\kappa,n)}=0,
\end{equation}
  where $\alpha_L\coloneqq\sin^2\left(\frac{\sqrt{\kappa} L}{2}\right)$, while  $\mathcal F_{\pm}$ is defined in \eqref{Ferrers-fct}. 

\subsection{Small spherical caps}\label{section-small-caps}  In the infinitesimal case $L\ll 1$, we assume that 
\begin{equation}\label{aszimptota-1}
	\lambda_L\sim \frac{C}{L}\ \ {\rm as}\ \  L\to 0,
\end{equation}
for some $C>0.$ 
By Proposition \ref{hipergeometrikus-Bessel-0-1} (with  settings $t={L}^2$, $x=\sqrt{\kappa}$ and $\mu=\frac{n}{2}-1$), on one hand, we obtain that
$$\lim_{L\to 0}\mathcal F_{\pm}(\alpha_L,\lambda_L,\kappa,n)=\Gamma\left(\frac{n}{2}\right)\left(\frac{2}{ C }\right)^{\frac{n}{2}-1}\left\{ \begin{array}{rll}
	J_{\frac{n}{2}-1}( C)   &{\rm for}&  \text{`}+\text{'}; \\
	I_{\frac{n}{2}-1}( C)    &{\rm for}&  \text{`}-\text{'}.
\end{array}\right.$$
On the other hand, the differentiation formula (\ref{F-differential}) and Proposition \ref{hipergeometrikus-Bessel-0-1} (with the choices $t={L}^2$, $x=\sqrt{\kappa}$ and $\mu=\frac{n}{2}$)  imply that 
$$\lim_{L\to 0} L^2 \mathcal F_{\pm}'(\alpha_L,\lambda_L,\kappa,n)=\Gamma\left(1+\frac{n}{2}\right)\left(\frac{2}{ C }\right)^{\frac{n}{2}}\left\{ \begin{array}{rll}
	J_{\frac{n}{2}}( C)   &{\rm for}&  \text{`}+\text{'}; \\
	-I_{\frac{n}{2}}( C)    &{\rm for}&  \text{`}-\text{'}.
\end{array}\right.$$
Since $\lambda_L$ satisfies equation  \eqref{K-egyenlo-nulla}, the above limits immediately imply that 
$\frac{J_{\frac{n}{2}}(C)}{J_{\frac{n}{2}-1}(C)}+\frac{I_\frac{n}{2}(C)}{I_{\frac{n}{2}-1}(C)}=0.$
Due to \eqref{Bessel-recurrence}, the latter equation is equivalent to 
$$\frac{J'_{\frac{n}{2}-1}(C)}{J_{\frac{n}{2}-1}(C)}-\frac{I'_{\frac{n}{2}-1}(C)}{I_{\frac{n}{2}-1}(C)}=0,$$
thus $C$ coincides with  the first positive zero $\mathfrak  h_{\frac{n}{2}-1}$ of the cross-product of Bessel functions.   According to  \eqref{aszimptota-1}, on has  the estimate 
$	\Lambda_{\kappa}(C_\kappa^n(L))\sim\frac{\mathfrak  h_{\frac{n}{2}-1}^4}{L^4}\ \ {\rm as}\ \ L\to 0,$
which concludes
the proof of  \eqref{small-caps}.\\

The asymptotic estimate \eqref{small-caps} shows that the fundamental tone $\Lambda_{\kappa}(C_\kappa^n(L))$  has an Euclidean character over small scales, since 	$\Lambda_0(B_0(L))={\mathfrak  h_{\frac{n}{2}-1}^4}/{L^4}$ for every $L>0$, see e.g.\ \cite{A-B}. Table \ref{table-11} provides an insight into the accuracy of the   estimate \eqref{small-caps}  in  a few  dimensions. 

\begin{table}[h!]
	\renewcommand{\arraystretch}{1.4}
\begin{tabular}{>{\centering}p{1.0cm}||>{\centering}p{2.1cm}|>{\centering}p{2.1cm}||>{\centering}p{2.1cm}|>{\centering}p{2.1cm}||>{\centering}p{2.1cm}|>{\centering}p{2.1cm}|}
	\hhline{~|-|-|-|-|-|-|}
	&
	\multicolumn{2}{c||}{\cellcolor{red!25}$n=2$}  & \multicolumn{2}{c||}{\cellcolor{green!25}$n=3$} & \multicolumn{2}{c|}{\cellcolor{teal!25}$n=4$} \tabularnewline
	\hline
	\multicolumn{1}{|p{1.07cm}||}
	{
		\cellcolor{gray!35}\phantom{$L$}   \phantom{$L$} \ \ \ $L$
	}
	&
	\cellcolor{red!5}Algebraic value of $\Lambda_{\kappa}^{\frac{1}{4}}(C_\kappa^2(L))$ & \cellcolor{red!10}Asymptotic estimate of $\Lambda_{\kappa}^{\frac{1}{4}}(C_\kappa^2(L))$ & \cellcolor{green!5}Algebraic value of $\Lambda_{\kappa}^{\frac{1}{4}}(C_\kappa^3(L))$ & \cellcolor{green!10}Asymptotic estimate of $\Lambda_{\kappa}^{\frac{1}{4}}(C_\kappa^3(L))$ & \cellcolor{teal!5}Algebraic value of $\Lambda_{\kappa}^{\frac{1}{4}}(C_\kappa^4(L))$ & \cellcolor{teal!10}Asymptotic estimate of $\Lambda_{\kappa}^{\frac{1}{4}}(C_\kappa^4(L))$\tabularnewline
	\hline
	\hline
	\multicolumn{1}{|c||}{\cellcolor{gray!30}$0.4$} & \cellcolor{red!5}7.9764 & \cellcolor{red!10}7.9906 & \cellcolor{green!5}9.7785 &\cellcolor{green!10}9.8165 &\cellcolor{teal!5}11.4588 & \cellcolor{teal!10}11.5272\tabularnewline
	\hline
	\multicolumn{1}{|c||}{\cellcolor{gray!25}$0.03$} & \cellcolor{red!5}106.5396 &\cellcolor{red!10}106.5406 &\cellcolor{green!5}130.8839 &\cellcolor{green!10}130.8867 & \cellcolor{teal!5}153.6915&\cellcolor{teal!10}153.6966 \tabularnewline
	\hline
	\multicolumn{1}{|c||}{\cellcolor{gray!20}$0.002$} & \cellcolor{red!5}1598.1102 & \cellcolor{red!10}1598.1103 & \cellcolor{green!5}1963.30097 &\cellcolor{green!10}1963.3011 & \cellcolor{teal!5}2305.4496 &\cellcolor{teal!10}2305.4501  \tabularnewline
	\hline
	\multicolumn{1}{|c||}{\cellcolor{gray!15}$ 0.0001$} & \cellcolor{red!5}31962.205 & \cellcolor{red!10}31962.206 & \cellcolor{green!5}39266.0231 & \cellcolor{green!10}39266.0232 & \cellcolor{teal!5}46108.9987 & \cellcolor{teal!10}46108.9988 \tabularnewline
	\hline
		\multicolumn{1}{|c||}{\cellcolor{gray!15}	$\vdots$} & \cellcolor{red!5}	$\vdots$ & \cellcolor{red!10}	$\vdots$ & \cellcolor{green!5}	$\vdots$ & \cellcolor{green!10}	$\vdots$ & \cellcolor{teal!5}	$\vdots$ & \cellcolor{teal!10}	$\vdots$ \tabularnewline
	\hline
	\multicolumn{1}{|c||}{\cellcolor{gray!10}$ L\to 0$} & \cellcolor{red!5}$\to +\infty$ & \cellcolor{red!10}$\to +\infty$ & \cellcolor{green!5}$\to +\infty$ & \cellcolor{green!10}$\to +\infty$ & \cellcolor{teal!5}$\to +\infty$ & \cellcolor{teal!10}$\to +\infty$ \tabularnewline
	\hline
\end{tabular}
	\renewcommand{\arraystretch}{1.4}
	\vspace{0.2cm}
\caption{Algebraic values and asymptotic estimates of the fourth root of the fundamental tone $\Lambda_{\kappa}(C_\kappa^n(L))$  for small clamped  spherical caps in dimensions 2, 3 and 4 ($\kappa=1$, for simplicity). The algebraic value  $\Lambda^\frac{1}{4}_{\kappa}(C_\kappa^n(L))$ is  the first positive zero  of equation  \eqref{K-egyenlo-nulla}, while the asymptotic estimate is given by \eqref{small-caps}. 
}
\label{table-11}
\end{table}

\subsection{Large spherical caps}\label{subsection-5-2} In the sequel we will investigate the behavior of $\lambda_L>0$ as $L\to \pi/\sqrt{\kappa}$, see \eqref{K-egyenlo-nulla}, which has a  dimension-depending character.  

\subsubsection{The case $n\geq 4$.}
 On account of 
\eqref{F-differential} and \eqref{K-egyenlo-nulla}, 
the identity $\mathcal K_{\kappa,n}(\alpha_L,\lambda_L)=0$ can be rewritten into the equivalent form  
\begin{align}\label{egyenlet-atirva}
\nonumber 0=&~	\left(\frac{1}{4}-\Lambda_-^2(\lambda_L)\right)\frac{{_2F}_1\left(\frac{3}{2}-\Lambda_-(\lambda_L),\frac{3}{2}+\Lambda_-(\lambda_L);\frac{n+2}{2};\sin^2\left(\frac{\sqrt{\kappa} L}{2}\right)\right)}{{_2F}_1\left(\frac{1}{2}-\Lambda_-(\lambda_L),\frac{1}{2}+\Lambda_-(\lambda_L);\frac{n}{2};\sin^2\left(\frac{\sqrt{\kappa} L}{2}\right)\right)}\\&-\left(\frac{1}{4}-\Lambda_+^2(\lambda_L)\right)\frac{{_2F}_1\left(\frac{3}{2}-\Lambda_+(\lambda_L),\frac{3}{2}+\Lambda_+(\lambda_L);\frac{n+2}{2};\sin^2\left(\frac{\sqrt{\kappa} L}{2}\right)\right)}{{_2F}_1\left(\frac{1}{2}-\Lambda_+(\lambda_L),\frac{1}{2}+\Lambda_+(\lambda_L);\frac{n}{2};\sin^2\left(\frac{\sqrt{\kappa} L}{2}\right)\right)},
\end{align}
where we have used the notation  \eqref{lambda-pm}. 
We assume that $\lambda_L\to \lambda_0$ for some $\lambda_0\geq 0$ as $L\to \pi/\sqrt{\kappa}.$ If $n\geq 5$, we have that  $\frac{n+2}{2}-\left(\frac{3}{2}-\Lambda_\pm(\lambda_L)\right)-\left(\frac{3}{2}+\Lambda_\pm(\lambda_L)\right)=\frac{n-4}{2}>0$ and $\frac{n}{2}-\left(\frac{1}{2}-\Lambda_\pm(\lambda_L)\right)-\left(\frac{1}{2}+\Lambda_\pm(\lambda_L)\right)=\frac{n-2}{2}>0$, thus the asymptotic formula \eqref{singularity-1} applied to \eqref{egyenlet-atirva}  implies that $\lambda_0=0$. If $n=4$, by using a similar argument as above, the asymptotic formulas \eqref{singularity-1} and \eqref{singularity-2} applied to \eqref{egyenlet-atirva}  imply again that $\lambda_0=0$. Consequently,  $\Lambda_{\kappa}(C_\kappa^n(L))\to 0$ as $L\to \pi/\sqrt{\kappa}$ for every $n\geq 4.$ 


\begin{remark}\rm
	We note that when $n\in \{2,3\}$, a similar argument as in the case $n\geq 4$ can be formally applied to \eqref{egyenlet-atirva} via the asymptotic formula \eqref{singularity-3}; however, in both cases the asymptotic arguments lead us to an identity which looses any information on the behavior of $\Lambda_{\kappa}(C_\kappa^n(L))$ whenever $L\to \pi/\sqrt{\kappa}$. This phenomenon turns out to be  unsurprising, since the low-dimensional cases behave in a different manner with respect to the higher dimensional counterparts. 
\end{remark}


\subsubsection{The case $n=3$.} We first establish an elementary form of  \eqref{K-egyenlo-nulla} which is valid only in the 3-dimensional case. By relation (15.4.16) of Olver, Lozier,  Boisvert  and Clark \cite{Digital}, we have that  
\begin{equation}\label{zart-formula}
	{_2F}_1\left(\frac{1}{2}-C,\frac{1}{2}+C;\frac{3}{2};t\right)=\frac{\sin\left(2C\arcsin(\sqrt{t})\right)}{2C\sqrt{t}},\ \forall C\in \mathbb C\setminus \{0\}, t\in (0,1).
\end{equation}
Therefore, if we use the notations $$\Lambda_\pm\coloneqq \Lambda_\pm(\lambda,\kappa,3)=\sqrt{1\pm\frac{\lambda^2}{\kappa}}\ \ {\rm and}\ \  \widetilde \Lambda_-\coloneqq \mathfrak{i}\Lambda_-(\lambda,\kappa,3)= \sqrt{\frac{\lambda^2}{\kappa}-1},$$
see \eqref{lambda-pm}, one has that
$$\mathcal F_{-}(t,\lambda,\kappa,3)=\left\{ \begin{array}{@{\,}l@{~}l@{~}l@{\,}}\def\arraystretch{1.3}
	\dfrac{\sinh\left(2\widetilde \Lambda_- \arcsin (\sqrt{t})\right)}{2\widetilde \Lambda_- \sqrt{t}} &\mbox{if} &  \lambda>\sqrt{\kappa}; \\
	\dfrac{\arcsin \left(\sqrt{t}\right)}{\sqrt{t}} &\mbox{if} &  \lambda=\sqrt{\kappa}; \\
	\dfrac{\sin\left(2 \Lambda_- \arcsin (\sqrt{t})\right)}{2 \Lambda_- \sqrt{t}} &\mbox{if} &  \lambda<\sqrt{\kappa}; \\
\end{array}\right.\def\arraystretch{1}
\ \ {\rm and}\ \ \mathcal F_{+}(t,\lambda,\kappa,3)=\frac{\sin\left(2\Lambda_+ \arcsin (\sqrt{t})\right)}{2 \Lambda_+ \sqrt{t}}.$$
Accordingly, by using (\ref{K-definicio}),   we have for every $ t\in (0,1)$ that
\begin{equation}\label{explicit3dimenzioban}\def\arraystretch{1.5}
	\mathcal K_{\kappa,3}(t,\lambda)={ \dfrac{1}{\sqrt{t(1-t)}}\cdot
		\left\{ \begin{array}{@{\,}l@{~}l@{~}l@{\,}}
			{\widetilde \Lambda_-\coth\left(2\widetilde \Lambda_-\arcsin (\sqrt{t})\right)- \Lambda_+\cot\left(2 \Lambda_+\arcsin (\sqrt{t})\right)} &\mbox{if} &  \lambda>\sqrt{\kappa}; \\
			\dfrac{1}{2\arcsin \left(\sqrt{t}\right)}-\sqrt{2}\cot\left(2\sqrt{2}\arcsin (\sqrt{t})\right) &\mbox{if} &  \lambda=\sqrt{\kappa}; \\
			{\Lambda_-\cot\left(2 \Lambda_-\arcsin (\sqrt{t})\right)- \Lambda_+\cot\left(2 \Lambda_+\arcsin (\sqrt{t})\right)}  &\mbox{if} &  \lambda<\sqrt{\kappa}. \\
		\end{array}\right.}
	\def\arraystretch{1}
\end{equation}

We are ready to investigate the behavior of $\Lambda_{\kappa}(C_\kappa^3(L))$ as $L\to \pi/\sqrt{\kappa}$. By contradiction, we assume that $\lambda_L\leq  \sqrt{\kappa}$, thus $\mu_L\coloneqq{\lambda_L^2}/{\kappa}\in (0,1].$  First, when $\lambda_L<  \sqrt{\kappa}$, by \eqref{explicit3dimenzioban} 
the identity $\mathcal K_{\kappa,3}(\alpha_L,\lambda_L)=0$ is equivalent to 
\begin{equation}\label{egyenlet-nincs-megoldas}
	\sqrt{1-\mu_L}\cot \big(\sqrt{1-\mu_L}\sqrt{\kappa} L\big)-\sqrt{1+\mu_L}\cot \big(\sqrt{1+\mu_L}\sqrt{\kappa} L\big)=0.
\end{equation}
We claim that  \eqref{egyenlet-nincs-megoldas} has no solution in $\mu_L$ for any $L\in (0,\pi/\sqrt{\kappa})$. On one hand, if $\sqrt{1+\mu_L}\sqrt{\kappa} L < \pi$, then by the monotonicity of $s\mapsto s\cot(s)$ on $(0,\pi)$, we have that $$\sqrt{1-\mu_L}\cot \big(\sqrt{1-\mu_L}\sqrt{\kappa} L\big)>\sqrt{1+\mu_L}\cot \big(\sqrt{1+\mu_L}\sqrt{\kappa} L\big).$$ On the other hand, if $\sqrt{1+\mu_L}\sqrt{\kappa} L >\pi$, since $\mu_L<1$ and $\sqrt{\kappa} L<\pi$,  the monotonicity of $s\mapsto s\cot(s)$ on $(\pi,2\pi)$ and on $(0,\pi)$, respectively, implies that 
\begin{align*}
	\sqrt{1+\mu_L}\sqrt{\kappa} L\cot \big(\sqrt{1+\mu_L}\sqrt{\kappa} L\big)&\geq \sqrt{2}\pi\cot \big(\sqrt{2}\pi\big)\approx 1.2272\\&>1=\lim_{s\to 0}\sqrt{1-\mu_L}s\cot\big (\sqrt{1-\mu_L}s\big)\\&\geq  \sqrt{1-\mu_L}\sqrt{\kappa} L\cot \big(\sqrt{1-\mu_L}\sqrt{\kappa} L\big).
\end{align*}
The above estimates conclude the claim together with the  limit cases, i.e., 
\begin{itemize}
	\item[$\bullet$] $\sqrt{1+\mu_L}\sqrt{\kappa} L =\pi$, when the left hand side of \eqref{egyenlet-nincs-megoldas} blows up; and 
	\item[$\bullet$] $\lambda_L=\sqrt{\kappa}$, when $\mathcal K_{\kappa,3}(\alpha_L,\lambda_L)=0$ reduces (due to \eqref{explicit3dimenzioban}) to an incompatible relation. 
\end{itemize}
Consequently, the only possible case when  $\mathcal K_{\kappa,3}(\alpha_L,\lambda_L)=0$ might hold is when $\lambda_L>  \sqrt{\kappa}$, obtaining by \eqref{explicit3dimenzioban} that
$$\sqrt{\mu_L-1}\coth \big(\sqrt{\mu_L-1}\sqrt{\kappa} L\big)-\sqrt{1+\mu_L}\cot \big(\sqrt{1+\mu_L}\sqrt{\kappa} L\big)=0,$$
where $\mu_L={\lambda_L^2}/{\kappa}>1$. Since $\sqrt{\kappa} L\to \pi$ and we may consider $\mu_L\to\mu$ for some $\mu\geq 1$, the latter equation reduces to 
$$\sqrt{\mu-1}\coth \big(\pi\sqrt{\mu-1} \big)-\sqrt{1+\mu}\cot \big(\pi\sqrt{1+\mu} \big)=0,$$
whose first positive zero is $\mu_3\coloneqq \mu\approx 1.0277$. Therefore, $$\Lambda_{\kappa}(C_\kappa^3(L))=\lambda_L^4=\mu_L^2\kappa^2\to \mu_3^2\kappa^2 \ \ {\rm as}\ \ L \to \frac{\pi}{\sqrt{\kappa}}.$$

\subsubsection{The case $n=2$.} In this special case, by using a simple relationship between the Gaussian hypergeometric and Legendre functions 
(see \eqref{P-hypergoemetric} for $\mu=0$), the identity  $\mathcal K_{\kappa,2}(\alpha_L,\lambda_L)=0$ is equivalent to 
\begin{equation}\label{egyenlet-2-dim-a}
\left.\frac{{\rm d}}{{\rm d} t}\ln\frac{\LP_{\nu_-(\lambda_L)}^{0}(1-2t)}{\LP_{\nu_+(\lambda_L)}^{0}(1-2t)}\right|_{t=\alpha_L}=0,
\end{equation}
where  $\nu_\pm(\lambda)=\Lambda_\pm(\lambda)-\frac{1}{2}=\sqrt{\frac{1}{4}\pm \frac{\lambda^2}{\kappa}}-\frac{1}{2}$.  
The derivation formula  \eqref{derivation-legendre} transforms the equation \eqref{egyenlet-2-dim-a} into 
\begin{align}\label{poz-neg}
	\nonumber 0=&~\nu_-(\lambda_L)\left(\frac{\LP_{\nu_-(\lambda_L)-1}^{0}\left(\cos\left(\sqrt{\kappa}L\right)\right)}{\LP_{\nu_-(\lambda_L)}^{0}\left(\cos(\sqrt{\kappa}L)\right)}-\cos\left(\sqrt{\kappa}L\right)\right)\\&-\nu_+(\lambda_L)\left(\frac{\LP_{\nu_+(\lambda_L)-1}^{0}\left(\cos(\sqrt{\kappa}L)\right)}{\LP_{\nu_+(\lambda_L)}^{0}\left(\cos(\sqrt{\kappa}L)\right)}-\cos\left(\sqrt{\kappa}L\right)\right).
\end{align}
By \eqref{derivative-Legendre} and \eqref{A-expression}, it turns out that for every fixed $s\in (-1,1)$ the function $t\mapsto t\left(\frac{\LP_{t-1}^{0}(s)}{\LP_{t}^{0}(s)}-s\right)$ is increasing on $\big(-\frac{1}{2},\frac{\sqrt{2}-1}{2}\big)$; in particular, one has the inequality
$$\nu_-(\lambda_L)\left(\frac{\LP_{\nu_-(\lambda_L)-1}^{0}\left(\cos(\sqrt{\kappa}L)\right)}{\LP_{\nu_-(\lambda_L)}^{0}\left(\cos(\sqrt{\kappa}L)\right)}-\cos\left(\sqrt{\kappa}L\right)\right)<\nu_+(\lambda_L)\left(\frac{\LP_{\nu_+(\lambda_L)-1}^{0}\left(\cos(\sqrt{\kappa}L)\right)}{\LP_{\nu_+(\lambda_L)}^{0}\left(\cos(\sqrt{\kappa}L)\right)}-\cos\left(\sqrt{\kappa}L\right)\right)$$
for every $L\in \big(0,{\pi}/{\sqrt{\kappa}}\big)$ and $\lambda_L\in \big(0,{\sqrt{\kappa}}/{2}\big).$
Thus, equation \eqref{poz-neg} has no solution whenever $(L,\mu_L)\in  \big(0,{\pi}/{\sqrt{\kappa}}\big)\times \big(0,{\sqrt{\kappa}}/{2}\big).$ In particular, we necessarily have $\lambda_L\geq {\sqrt{\kappa}}/{2}$ for every  $L\in  \big(0,{\pi}/{\sqrt{\kappa}}\big)$, and  $\nu_-(\lambda_L)\in \mathbb C\setminus \mathbb R$ when  $\lambda_L> {\sqrt{\kappa}}/{2}$.

\begin{figure}[t!]
	\centering
	\includegraphics[scale=1]{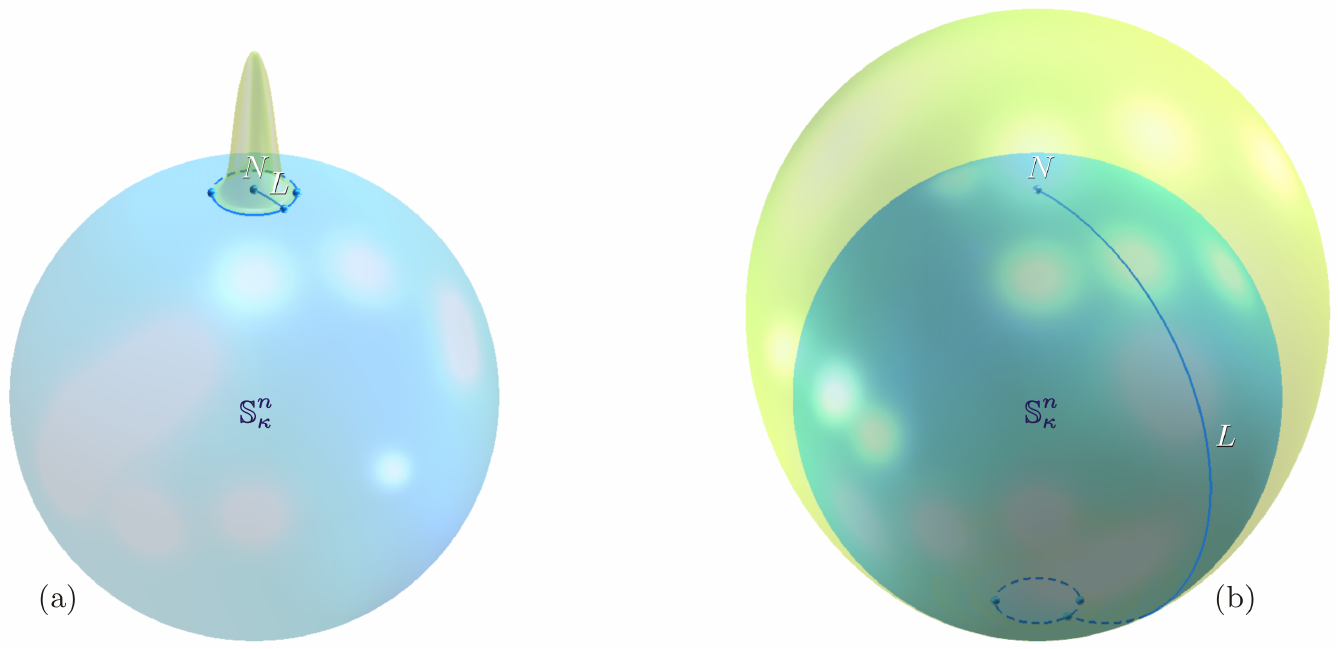}
	\caption{The shape of the first eigenfunction (light green) on  (a) small and (b) large  vibrating clamped spherical caps. The fundamental tone in (a) behaves as in the Euclidean case (cf.\ \S \ref{section-small-caps}), while in (b) it is dimensional-dependent (cf.\ \S \ref{subsection-5-2}).  
	}\label{4-abra}
\end{figure}

As $-1$ is a singularity  in \eqref{poz-neg} whenever $L\to \pi/\sqrt{\kappa}$, the symmetrization formula \eqref{inversion-P} and the behavior at the singularity $1$ of the Legendre functions  \eqref{singular-Q} yield -- after an asymptotic argument in \eqref{poz-neg} -- that 
$$\sin((\nu_-(\lambda_0)-\nu_+(\lambda_0))\pi)+\frac{2}{\pi}\sin(\nu_-(\lambda_0)\pi)\sin(\nu_+(\lambda_0)\pi)\left(\Psi(\nu_+(\lambda_0)+1)-\Psi(\nu_-(\lambda_0)+1)\right)=0,$$
where $\lambda_L\to \lambda_0$ as $L\to \pi/\sqrt{\kappa}$  for some $\lambda_0\geq {\sqrt{\kappa}}/{2}.$ We equivalently  transform the latter equation into 
\begin{equation}\label{tan-2-dim}
\tan\big(\Lambda_-(\lambda_0)\pi\big)-\tan\big(\Lambda_+(\lambda_0)\pi\big)+\frac{2}{\pi}\left(\Psi\left(\Lambda_+(\lambda_0)+\frac{1}{2}\right)-\Psi\left(\Lambda_-(\lambda_0)+\frac{1}{2}\right)\right)=0.	
\end{equation}
Due to \eqref{digamma-prop-2}, the imaginary part of \eqref{tan-2-dim} vanishes; thus,  $\mu_2\coloneqq{\lambda_0^2}/{\kappa}\geq \frac{1}{4}$ is the first positive zero of 
$$	\frac{\pi}{2}\tan\left(\pi\sqrt{\frac{1}{4}+\mu}\right)-\Psi\left(\sqrt{\frac{1}{4}+\mu}+\frac{1}{2}\right)+\Re \Psi\left(\sqrt{\frac{1}{4}-\mu}+\frac{1}{2}\right)=0,\ \ \mu\geq \frac{1}{4}.$$
In fact, one has that $\mu_2\approx 0.9125$, thus $$\Lambda_{\kappa}(C_\kappa^2(L))=\lambda_L^4\to \lambda_0^4=\mu_2^2\kappa^2\ \ {\rm as}\ \ L \to \frac{\pi}{\sqrt{\kappa}},$$
which concludes the proof of \eqref{large-caps}. \\

 Table \ref{table-1}  presents the numerical  behavior of the fundamental tone $\Lambda_{\kappa}(C_\kappa^n(L))$ in some dimensions whenever $L\to \pi/\sqrt{\kappa}$.

	\renewcommand{\arraystretch}{1.3}	
\providecommand{\tabularnewline}{\\}
\begin{table}[H]
	\centering
	\begin{tabular}{|c||>{\centering}p{2.7cm}|>{\centering}p{2.5cm}|>{\centering}p{1.0cm}|>{\centering}p{1.7cm}|>{\centering}p{1.7cm}|>{\centering}p{1.9cm}|>{\centering}p{1.0cm}|>{\centering}p{1.0cm}|>{\centering}p{1.0cm}|>{\centering}p{1.0cm}|>{\centering}p{1.0cm}|}
		\hline
		\cellcolor{teal!35}
		            \diagbox[width=\dimexpr \textwidth/8+2\tabcolsep\relax, height=0.8cm]{\ \ \ \ \\ \ \ \  $L$ }{ $n$\ \ \ }
		&
		\cellcolor{green!30}2
		&
		\cellcolor{green!25}
		3
		&
		\cellcolor{green!20}
		4
		&
		\cellcolor{green!15}
		5
		&
		\cellcolor{green!10}
		6
		&
		\cellcolor{green!5}
		
		7
		
		\tabularnewline
		\hline\hline
		\rowcolor{gray!30}
		\cellcolor{red!5}
		$0.99\pi$ & 0.8437 &1.1091& 0.8018 & $2.32\cdot 10^{-1}$ &$2.91\cdot 10^{-2}$ & $2.35\cdot 10^{-3}$
		\tabularnewline
		\hline
		\rowcolor{gray!25}
		\cellcolor{red!10}
		$0.999\pi$ & 0.8332 &1.0612& 0.4978&$2.39\cdot 10^{-2}$&$2.96\cdot 10^{-4}$&$2.36\cdot 10^{-6}$
		\tabularnewline
		\hline
		\rowcolor{gray!20}
		\cellcolor{red!15}
		$0.9999\pi$ & 0.8328 &1.05662& 0.3605 &$2.3\cdot 10^{-3}$ & $2.95\cdot 10^{-7}$& $2.35\cdot 10^{-9}$
		\tabularnewline
		\hline
		\rowcolor{gray!15}
		\cellcolor{red!20}
		$0.99999\pi$ & 0.83277 &1.05661& 0.2798& $2.16\cdot 10^{-4}$ &$2.4\cdot 10^{-8}$& $1.72\cdot 10^{-12}$
		\tabularnewline
		\hline
		\rowcolor{gray!10}
		\cellcolor{red!25}
		$\vdots$ & $\vdots$ &$\vdots$& $\vdots$ & $\vdots$ & $\vdots$ & $\vdots$
		\tabularnewline
		\hline
		\rowcolor{gray!5}
		\cellcolor{red!30}
		$\to \pi$ & $\to\mu_2^2\approx 0.83274$ &$\to\mu_3^2\approx 1.0561$& $\to 0$ & $\to 0$ & $\to 0$ & $\to 0$
		\tabularnewline
		\hline
	\end{tabular}
	\vspace*{0.2cm}
	\caption{Behavior of the fundamental tone $\Lambda_{\kappa}(C_\kappa^n(L))$ in dimensions $n\in \{2,\ldots,7\}$ for large  spherical caps (i.e.,\ $L\to \pi$ and $\kappa =1$, for simplicity);  $\mu_2$ and $\mu_3$ are the smallest positive zeros of  equations \eqref{transzdendental-2-dimenzio} and \eqref{transzdendental-3-dimenzio}, respectively. 
	}	\label{table-1}
\end{table}

\vspace{-0.5cm}

\section{Lord Rayleigh's conjecture: proof of Theorem \ref{fotetel-CP-pozitiv}}\label{section-6}

This section is devoted to the proof of Lord Rayleigh's conjecture on positively curved spaces. Based on  Theorem \ref{fundametal-tones-double}, first we further reduce the conjecture to the validity of an algebraic inequality (see \S \ref{subsection-reduction}),  then we conclude the proof (see \S \ref{high-dimension})  by using the  sharp growth estimates of the fundamental tone of  spherical caps (see \S \ref{section-5}). 

\subsection{Reduction to eigenvalue comparison on `half-caps'}\label{subsection-reduction} We first need a monotonicity result that plays an important role in the proof of Theorem \ref{fotetel-CP-pozitiv}; the notations are the same as before. 

\begin{proposition}\label{monotonicity-K}
	If $\kappa>0,$ $n\in \mathbb N_{\geq 2}$ and $t\in (0,1)$ are fixed,  the function $\lambda\mapsto \mathcal K_{\kappa,n}(t,\lambda)$ is increasing on $(0,\infty)$ between any two consecutive zeros of $\mathcal F_{+}(t,\cdot,\kappa,n)$. Moreover, $\lim_{\lambda\to 0}\mathcal K_{\kappa,n}(t,\lambda)=0.$
\end{proposition}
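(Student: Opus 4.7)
The plan is to reparametrise by $\mu \coloneqq \lambda^2/\kappa$, which is strictly increasing in $\lambda > 0$, and to establish the corresponding monotonicity in $\mu$. In the notation of Proposition~\ref{Ferrers-basic-lemma-1}, $\mathcal{F}_\pm(t,\lambda,\kappa,n) = v_\pm(\mu,t)$, so
$$
\mathcal{K}_{\kappa,n}(t,\lambda) = \frac{v'_-(\mu,t)}{v_-(\mu,t)} - \frac{v'_+(\mu,t)}{v_+(\mu,t)},
$$
where $'$ denotes $\partial/\partial t$. The central idea is to relate $\partial_\mu v_\pm$ to the companion function $w_\pm$ of Proposition~\ref{Ferrers-basic-lemma-1}(v), thereby converting the $t$-monotonicity of $w_\pm/v_\pm$ proved there into the desired $\mu$-monotonicity of $v'_\pm/v_\pm$.

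The key computation is to differentiate the coefficient $\beta_m^\pm(\mu) = (\tfrac{1}{2}-\Lambda_\pm)_m(\tfrac{1}{2}+\Lambda_\pm)_m/(m!(n/2)_m)$ termwise in $\mu$, using $\frac{d\Lambda_\pm}{d\mu} = \pm\frac{1}{2\Lambda_\pm}$ together with $\frac{d}{da}\ln(a)_m = \Psi(a+m)-\Psi(a)$. A short manipulation yields
$$
\partial_\mu \beta_m^\pm(\mu) = \pm\tfrac{1}{2}\bigl[\alpha_m^\pm(\mu) - \gamma_\pm(\mu)\,\beta_m^\pm(\mu)\bigr],\qquad \gamma_\pm(\mu) \coloneqq \frac{\Psi(\tfrac{1}{2}+\Lambda_\pm)-\Psi(\tfrac{1}{2}-\Lambda_\pm)}{\Lambda_\pm},
$$
where $\alpha_m^\pm(\mu)$ is precisely the $m$-th coefficient of $w_\pm$ as given in Proposition~\ref{Ferrers-basic-lemma-1}(v). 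Summing in $m$ produces the clean identity $\partial_\mu v_\pm = \pm\tfrac{1}{2}(w_\pm - \gamma_\pm v_\pm)$, so that $\partial_\mu \ln v_\pm = \pm\tfrac{1}{2}(w_\pm/v_\pm - \gamma_\pm)$, and swapping $\partial_t$ with $\partial_\mu$ gives
$$
\partial_\mu\!\left(\frac{v'_\pm(\mu,t)}{v_\pm(\mu,t)}\right) = \pm\tfrac{1}{2}\,\partial_t\!\left(\frac{w_\pm(\mu,t)}{v_\pm(\mu,t)}\right).
$$

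By Proposition~\ref{Ferrers-basic-lemma-1}(ii), $v_-(\mu,\cdot) > 0$ on $(0,1)$ for every $\mu > 0$, so Proposition~\ref{Ferrers-basic-lemma-1}(v) delivers $\partial_t(w_-/v_-) < 0$ throughout $(0,1)$, while the same part supplies $\partial_t(w_+/v_+) < 0$ between any two consecutive zeros of $v_+(\mu,\cdot)$. Combining these with the previous identity,
$$
\partial_\mu \mathcal{K}_{\kappa,n}(t,\lambda) = -\tfrac{1}{2}\,\partial_t(w_-/v_-) - \tfrac{1}{2}\,\partial_t(w_+/v_+) > 0
$$
on the set where $\mathcal{F}_+(t,\cdot,\kappa,n) \neq 0$. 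The chain rule $d\mu/d\lambda = 2\lambda/\kappa > 0$ then yields the claimed monotonicity in $\lambda$ between consecutive zeros of $\mathcal{F}_+(t,\cdot,\kappa,n)$. For the limit, Proposition~\ref{Ferrers-basic-lemma-1}(i) gives $v_+(0,t) = v_-(0,t) = (1-t)^{n/2-1}$, whence $v'_-/v_- = v'_+/v_+$ at $\mu = 0$ and therefore $\mathcal{K}_{\kappa,n}(t,\lambda) \to 0$ as $\lambda \to 0^+$. The principal technical nuisance will be the exceptional values $\Lambda_\pm(\mu) - \tfrac{1}{2} \in \mathbb{Z}$ excluded in Proposition~\ref{Ferrers-basic-lemma-1}(v); these form a discrete set and, as in that proposition, are handled by taking appropriate limits of the Digamma factors, with the monotonicity then extending by continuity of $\mathcal{K}_{\kappa,n}(t,\cdot)$.
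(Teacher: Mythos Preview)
Your proof is correct and follows essentially the same strategy as the paper: interchange $\partial_\lambda$ (equivalently $\partial_\mu$) with $\partial_t$ on $\ln(\mathcal F_-/\mathcal F_+)$, express $\partial_\mu\ln v_\pm$ in terms of $w_\pm/v_\pm$, and then invoke Proposition~\ref{Ferrers-basic-lemma-1}(v) for the $t$-monotonicity of $w_\pm/v_\pm$; the limit statement and the treatment of the discrete exceptional set $\Lambda_\pm-\tfrac12\in\mathbb Z$ are also handled as in the paper. The only difference is cosmetic: the paper reaches the identity $\partial_\mu\ln v_\pm=\pm\tfrac12\bigl(w_\pm/v_\pm-\gamma_\pm\bigr)$ via the Legendre-function representation \eqref{P-hypergoemetric} and the degree-derivative formula \eqref{derivative-Legendre}--\eqref{A-expression}, whereas you obtain it more directly by termwise differentiation of the Pochhammer coefficients $\beta_m^\pm(\mu)$---a cleaner route to the same endpoint.
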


\begin{proof}
Let $\kappa>0$ and $t\in (0,1)$ be fixed. For $n=3$, the proof is trivial due to relation \eqref{explicit3dimenzioban}; indeed,  a direct computation yields that   $\lambda\mapsto \mathcal K_{\kappa,3}(t,\lambda)$ is increasing on $(0,\infty)$ between any two consecutive zeros of $\mathcal F_{+}(t,\cdot,\kappa,3),$ which have the explicit closed form
	\begin{equation}\label{g-sajatertekek}
		\mathfrak  f_{\kappa,3,m}(t)=\sqrt{\kappa\left(\left(\frac{m\pi}{2\arcsin (\sqrt{t})}\right)^2-1\right)},\ \ m\in \mathbb N_{\geq 1}.
	\end{equation}
	
When $n\neq 3$, 
another approach is needed as no closed formula is available similar to \eqref{zart-formula} (and \eqref{explicit3dimenzioban}). In fact, the strategy is to `replace' the monotonicity of $\lambda\mapsto \mathcal K_{\kappa,n}(t,\lambda)$ with that of the ratio of hypergeometric-type functions with respect to the variable $t\in (0,1)$. 
	In the sequel we  consider those pairs $(t,\lambda)$  in the open domain for which $\mathcal F_{+}(t,\lambda,\kappa,n)\neq 0$.
	In addition, we first assume that $\Lambda_{\pm}-\frac{1}{2}\coloneqq\Lambda_{\pm}(\lambda,\kappa,n)-\frac{1}{2}\notin  \mathbb Z$ and $\Lambda_-\neq 0,$  see \eqref{lambda-pm} for $\Lambda_{\pm}$. The analyticity of  Gaussian hypergeometric functions together with relations   (\ref{P-hypergoemetric}) and \eqref{derivative-Legendre} imply that 
	\begin{align*}
		\frac{\partial}{\partial \lambda}\mathcal K_{\kappa,n}(t,\lambda)&=\frac{\partial}{\partial \lambda}\left(\frac{\partial}{\partial t}\ln\frac{\mathcal F_{-}(t,\lambda,\kappa,n)}{\mathcal F_{+}(t,\lambda,\kappa,n)}\right)=\frac{\partial}{\partial t}\left(\frac{\partial}{\partial \lambda}\ln\frac{\mathcal F_{-}(t,\lambda,\kappa,n)}{\mathcal F_{+}(t,\lambda,\kappa,n)}\right)\\&=\frac{\partial}{\partial t}\frac{\partial}{\partial \lambda}\ln\frac{\LP_{\Lambda_--\frac{1}{2}}^{1-\frac{n}{2}}(1-2t)}{\LP_{\Lambda_+-\frac{1}{2}}^{1-\frac{n}{2}}(1-2t)}=\frac{\partial}{\partial t}\frac{\frac{\partial}{\partial \lambda}\LP_{\Lambda_--\frac{1}{2}}^{1-\frac{n}{2}}(1-2t)}{\LP_{\Lambda_--\frac{1}{2}}^{1-\frac{n}{2}}(1-2t)}-\frac{\partial}{\partial t}\frac{\frac{\partial}{\partial \lambda}\LP_{\Lambda_+-\frac{1}{2}}^{1-\frac{n}{2}}(1-2t)}{\LP_{\Lambda_+-\frac{1}{2}}^{1-\frac{n}{2}}(1-2t)}\\&=\frac{\lambda}{\kappa \pi}\left(\frac{1}{\Lambda_-}\frac{\partial}{\partial t}\frac{\LA_{\Lambda_--\frac{1}{2}}^{1-\frac{n}{2}}(1-2t)}{\LP_{\Lambda_--\frac{1}{2}}^{1-\frac{n}{2}}(1-2t)}+\frac{1}{\Lambda_+}\frac{\partial}{\partial t}\frac{\LA_{\Lambda_+-\frac{1}{2}}^{1-\frac{n}{2}}(1-2t)}{\LP_{\Lambda_+-\frac{1}{2}}^{1-\frac{n}{2}}(1-2t)}\right).
	\end{align*}
	Therefore,  it is enough to prove that the function $t\mapsto \frac{1}{\Lambda_\pm}\frac{\LA_{\Lambda_\pm-\frac{1}{2}}^{1-\frac{n}{2}}(1-2t)}{\LP_{\Lambda_\pm-\frac{1}{2}}^{1-\frac{n}{2}}(1-2t)},$ $t\in (0,1)$ is increasing in  the aforementioned domain. Using the formula $\sin(\nu\pi)\Gamma(-\nu)\Gamma(\nu+1)=-\pi$ for every $\nu\in \mathbb C\setminus \mathbb Z$ (see \eqref{Gamma-special} with suitable choices),  relations \eqref{P-hypergoemetric} and \eqref{A-expression} imply that
	$$\frac{1}{\Lambda_\pm}\frac{\LA_{\Lambda_\pm-\frac{1}{2}}^{1-\frac{n}{2}}(1-2t)}{\LP_{\Lambda_\pm-\frac{1}{2}}^{1-\frac{n}{2}}(1-2t)}=-\pi \cdot\frac{\displaystyle\sum_{m=0}^\infty \alpha_m^\pm t^m}{\displaystyle\sum_{m=0}^\infty \beta_m^\pm t^m}=:-\pi\cdot\frac{v_\pm(t)}{w_\pm(t)},$$
	where the coefficients are
	$$\alpha_m^\pm =\beta_m^\pm \frac{\Psi(m+\frac{1}{2}+\Lambda_{\pm})-\Psi(m+\frac{1}{2}-\Lambda_{\pm})}{\Lambda_{\pm}} \ \ {\rm and }\ \  \beta_m^\pm =\frac{(\frac{1}{2}-\Lambda_{\pm})_m(\frac{1}{2}+\Lambda_{\pm})_m}{m!(\frac{n}{2})_m},\ m\in \mathbb N,$$
	 are well-defined. 
Consequently, the claimed monotonicity of  $\lambda\mapsto \mathcal K_{\kappa,n}(t,\lambda)$ reduces to the decreasing character of  $t\mapsto \frac{w_\pm(t)}{v_\pm(t)}$, $t\in (0,1)$,  
which follows from Proposition \ref{Ferrers-basic-lemma-1}/(v). When $\Lambda_-=0$, a limit is considered, by obtaining the equality $\alpha_m^\pm=2\beta_m^\pm\Psi\left(1,m+\frac{1}{2}\right)$, and a similar proof applies as above. Finally, when  $\Lambda_--\frac{1}{2}\in  \mathbb Z$ or $\Lambda_+-\frac{1}{2}\in  \mathbb Z$, another  argument is needed, where the discussion is even simpler, since  the corresponding series can be reduced to some polynomials. 


The fact that $\lim_{\lambda\to 0}\mathcal K_{\kappa,n}(t,\lambda)=0$ directly follows from \eqref{lambda-pm}--\eqref{K-definicio}, which ends the proof.  
\end{proof}

From now on, we focus on the proof of Lord Rayleigh's conjecture for positively curved vibrating clamped plates. To this end, let  $(M,g)$ be a compact $n$-dimensional Riemannian manifold with ${\sf Ric}_{(M,g)}\geq (n-1)\kappa>0$ and  consider the non-empty smooth domain  $\Omega\subset M$ with its normalized rearrangement $\Omega^\star\subset \mathbb S^n_\kappa$, i.e.,     
\begin{equation}\label{terfogatok-egyenloek}
	\frac{V_g(\Omega)}{V_g(M)}=\frac{V_{\kappa}(\Omega^\star)}{V_{\kappa}(\mathbb S^n_\kappa)}.
\end{equation}
Using the notation  \eqref{Lambda-ertelmezes},  Lord Rayleigh's conjecture is proved once we show that
\begin{equation}\label{amit-kell-igazolni}
	\Lambda(\kappa,n,a,b)\geq \Lambda(\kappa,n,0,L)
\end{equation}
 for every $a,b\geq 0$ with $	V_\kappa(C_\kappa^n(a))+V_\kappa(C_\kappa^n(b))=V_\kappa(\Omega^\star)=V_\kappa(C_\kappa^n(L)),$ see \eqref{terfogatok-ujra}. Indeed, if \eqref{amit-kell-igazolni} holds, by Theorem \ref{fundametal-tones-double} we have that
\begin{equation}\label{atirva-amit-kell}
\Lambda_g(\Omega)\geq \Lambda(\kappa,n,a,b)\geq \Lambda(\kappa,n,0,L)=\Lambda_\kappa(C_\kappa^n(L))=\Lambda_\kappa(\Omega^\star),	
\end{equation}
which is precisely the required inequality \eqref{foegyenlotlenseg}.

According to the statement of Theorem \ref{fundametal-tones-double},  inequality \eqref{amit-kell-igazolni} is equivalent to
 \begin{equation}\label{amit-kell-igazolni-2}
 	\lambda_{\kappa,n}(\alpha,\beta)\geq \lambda_{\kappa,n}(0,\alpha_L),
 \end{equation}
where $\lambda_{\kappa,n}(\alpha,\beta)>0$ is the smallest positive zero of the equation  \eqref{cross-product-equation},  $\alpha_L=\sin^2\left(\frac{\sqrt{\kappa} L}{2}\right)$, while $\alpha,\beta\in (0,1)$ are arbitrarily chosen such that  
\begin{equation}\label{alfbet}
	\int_0^{\frac{2}{\sqrt{\kappa}}\sin^{-1}(\sqrt{\alpha})} \sin(\sqrt{\kappa}\rho)^{n-1}{\rm d}\rho + \int_0^{\frac{2}{\sqrt{\kappa}}\sin^{-1}(\sqrt{\beta})} \sin(\sqrt{\kappa}\rho)^{n-1}{\rm d}\rho =\int_0^L \sin(\sqrt{\kappa}\rho)^{n-1}{\rm d}\rho,
\end{equation}
see \eqref{terfogatok-ujra} and \eqref{alfa-beta}.

Without loss of generality, we may choose $\alpha\leq \beta$ that verify \eqref{alfbet}. In view of Proposition \ref{monotonicity-K}, the function 
$$\mathcal S_{\kappa,n}(\alpha,\beta,\lambda)=  (1-\alpha)^{\frac{n}{2}}\alpha^{\frac{n}{2}}\mathcal K_{\kappa,n}(\alpha,\lambda)+(1-\beta)^{\frac{n}{2}}\beta^{\frac{n}{2}}\mathcal K_{\kappa,n}(\beta,\lambda)$$ inherits the properties of $\mathcal K_{\kappa,n}$, i.e., $\lambda\mapsto \mathcal S_{\kappa,n}(\alpha,\beta,\lambda)$ is increasing on $(0,\infty)$ between any two consecutive poles of $\mathcal S_{\kappa,n}(\alpha,\beta,\cdot)$ and $\lim_{\lambda\to 0}\mathcal S_{\kappa,n}(\alpha,\beta,\lambda)=0.$ In particular, if we denote the sequence of zeros of the Gaussian hypergeometric function $\mathcal F_{+}(t,\cdot,\kappa,n)$ by $(\mathfrak  f_{\kappa,n,m}(t))_m$ (cf.\ Proposition \ref{Ferrers-basic-lemma-1}/(iii)), it turns out that the first positive zero $\lambda_{\kappa,n}(\alpha,\beta)$  of $\mathcal S_{\kappa,n}(\alpha,\beta,\cdot)$ will be situated between $\mathfrak  f_{\kappa,n,1}(\beta)$ and $\mathfrak  f_{\kappa,n,1}(\alpha)$; more precisely, 
\begin{equation}\label{fogo-alfa-beta}
	\mathfrak  f_{\kappa,n,1}(\beta)\leq \lambda_{\kappa,n}(\alpha,\beta)\leq \min\{\mathfrak  f_{\kappa,n,1}(\alpha),\mathfrak  f_{\kappa,n,2}(\beta)\}.
\end{equation}

We associate with $L\in (\pi/\sqrt{\kappa})$ (arising from  $V_\kappa(C_\kappa^n(L))=V_\kappa(\Omega^\star)$) the {\it half-cap radius} $L_0\in \left(0,\frac{\pi}{2\sqrt{\kappa}}\right)$ defined by 
\begin{equation}\label{half-cap-radius}
2V_\kappa(C_\kappa^n(L_0))=V_\kappa(C_\kappa^n(L)),	
\end{equation}
and we also introduce the notation $\alpha_{L_0}\coloneqq\sin^2\left(\frac{\sqrt{\kappa} L_0}{2}\right).$ Letting $\alpha\to \alpha_{L_0}$ and $\beta\to \alpha_{L_0}$ in \eqref{fogo-alfa-beta},  we obtain that
\begin{equation}\label{fogo-hatarertek}
	 \lambda_{\kappa,n}(\alpha_{L_0},\alpha_{L_0})=\mathfrak  f_{\kappa,n,1}(\alpha_{L_0}),
\end{equation}
which corresponds to $a=b={L_0}$. Due to \eqref{fogo-hatarertek}, a \textit{necessary} condition for the validity of \eqref{amit-kell-igazolni-2} is 
   \begin{equation}\label{necessarily-1}
	\mathfrak  f_{\kappa,n,1}(\alpha_{L_0})\geq \lambda_{\kappa,n}(0,\alpha_{L}).
\end{equation}
 Postponing the study of \eqref{necessarily-1} (see  \S \ref{high-dimension}), we show in the sequel that \eqref{necessarily-1} is also \textit{sufficient} to prove Rayleigh's conjecture that will be done in two steps.

First, if strict inequality occurs in \eqref{necessarily-1}, by using a continuity argument in the transcendental equation $\mathcal S_{\kappa,n}(\alpha,\beta,\lambda)= 0$ together with equality \eqref{fogo-hatarertek}, it follows that there is $\alpha>0$ sufficiently close to $\alpha_{L_0}$ such that  $\lambda_{\kappa,n}(\alpha,\beta(\alpha))>\lambda_{\kappa,n}(0,{\alpha_{L}}),$ where $\beta=\beta(\alpha)$ is from \eqref{alfbet}. Quantitatively, the last statement implies that one can find the unique minimal $\alpha_0\in (0,\alpha_{L_0})$ such that $\lambda_{\kappa,n}(\alpha,\beta(\alpha))>\lambda_{\kappa,n}(0,{\alpha_{L}}),$ holds for $\alpha\in [\alpha_0,\alpha_{L_0}]$, so \eqref{amit-kell-igazolni-2} is verified. In particular,  $\beta_0=\beta(\alpha_0)$ verifies  the condition $\mathfrak  f_{\kappa,n,1}(\beta_0)=\lambda_{\kappa,n}(0,\alpha_{L})$, where $\beta_0$ is a pole of the function $\mathcal S_{\kappa,n}(\alpha_0,\cdot,\lambda_{\kappa,n}(0,\alpha_{L}))$.

Second,  by definition, we have that $\mathcal S_{\kappa,n}(0,\alpha_{L},\lambda_{\kappa,n}(0,\alpha_{L}))=0$ and the construction of  $\alpha_0>0$ and $\beta_0=\beta(\alpha_0)>0$ implies that   $\lim_{\alpha\nearrow \alpha_0}\mathcal S_{\kappa,n}(\alpha,\beta(\alpha),\lambda_{\kappa,n}(0,\alpha_{L}))=-\infty$. In fact, one has that $\mathcal S_{\kappa,n}(\alpha,\beta(\alpha),\lambda_{\kappa,n}(0,\alpha_{L}))<0$ for every $\alpha\in (0,\alpha_0)$. Indeed, since by \eqref{alfbet}  we have that $$[\alpha(1-\alpha)]^{\frac{n}{2}-1}+\beta'(\alpha)[\beta(\alpha)(1-\beta(\alpha))]^{\frac{n}{2}-1}=0,$$ a  similar computation as in Karp and Sitnik \cite{KS} shows  that
$\frac{\rm d}{\rm d\alpha}\mathcal S_{\kappa,n}(\alpha,\beta(\alpha),\lambda_{\kappa,n}(0,\alpha_{L}))<0$ for every $\alpha\in (0,\alpha_0)$. 
Thus, the function $\alpha\mapsto \mathcal S_{\kappa,n}(\alpha,\beta(\alpha),\lambda_{\kappa,n}(0,\alpha_{L}))$ is decreasing on  $(0,\alpha_0)$ and 
\begin{equation}\label{s-k-n-kell}
	\mathcal S_{\kappa,n}(\alpha,\beta(\alpha),\lambda_{\kappa,n}(0,\alpha_{L}))< \mathcal S_{\kappa,n}(0,\alpha_{L},\lambda_{\kappa,n}(0,\alpha_{L}))=0,\ \forall  \alpha\in (0,\alpha_0).
\end{equation}
If we assume by contradiction that there exists $\alpha\in (0,\alpha_0)$ such that $	\lambda_{\kappa,n}(\alpha,\beta(\alpha))< \lambda_{\kappa,n}(0,\alpha_{L})$, by the property \eqref{s-k-n-kell} and the fact that $\lambda\mapsto \mathcal S_{\kappa,n}(\alpha,\beta,\lambda)$ is  increasing  between any two consecutive poles of $\mathcal S_{\kappa,n}(\alpha,\beta,\cdot)$, it turns out that 
$$0> \mathcal S_{\kappa,n}(\alpha,\beta(\alpha),\lambda_{\kappa,n}(0,\alpha_{L}))\geq \mathcal S_{\kappa,n}(\alpha,\beta(\alpha),\lambda_{\kappa,n}(\alpha,\beta(\alpha)))=0,$$ which is a contradiction.
Therefore, $	\lambda_{\kappa,n}(\alpha,\beta(\alpha))\geq \lambda_{\kappa,n}(0,\alpha_{L})$ for every $\alpha\in (0,\alpha_0)$, which ends the proof of \eqref{amit-kell-igazolni-2}. \\

In conclusion,  it remains to investigate the validity of inequality \eqref{necessarily-1} which turns out to depend both on the \textit{dimension} $n\geq2$ and the {\it size} $L>0$. In this way we can decide the  validity of Lord Rayleigh's conjecture for any non-empty open smooth set $\Omega\subset M$ that verifies the equality $\Omega^\star=C_\kappa^n(L)$ and condition  \eqref{terfogatok-egyenloek}. The next subsection is devoted to this study. 


\subsection{Validity of the conjecture: dimension and  domain dependence}\label{high-dimension}

Using the  notations from \S \ref{subsection-reduction}, for every  $\kappa>0$ and $n\in \mathbb N_{\geq 2}$ we introduce the set 
$$\mathcal H_{\kappa,n}\coloneqq\left\{L\in \left(0,\frac{\pi}{\sqrt{\kappa}}\right): \mathfrak  f_{\kappa,n,1}(\alpha_{L_0})< \lambda_{\kappa,n}(0,\alpha_{L})\right\},$$
 where $\alpha_{L}=\sin^2\left(\frac{\sqrt{\kappa} L}{2}\right)$,  $\alpha_{L_0}=\sin^2\left(\frac{\sqrt{\kappa} L_0}{2}\right)$ and the value  $L_0>0$ is  the half-cap radius associated with $L>0,$ see \eqref{half-cap-radius}. The following result is crucial. 

\begin{proposition}\label{utolso-proposition}
If $\kappa>0$ and $n\in \mathbb N_{\geq 2}$, the following statements hold$:$
	\begin{itemize}
		\item[(i)] $\mathcal H_{\kappa,n}\neq \varnothing$ for every $n\geq 4;$
		\item[(ii)] $\mathcal H_{\kappa,2}=\mathcal H_{\kappa,3}=\varnothing.$
	\end{itemize}
\end{proposition}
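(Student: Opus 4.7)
The starting observation unifying both parts is that $\mathfrak f_{\kappa,n,1}(\alpha_{L_0})^{2}$ is precisely the first Dirichlet eigenvalue of $-\Delta_\kappa$ on the half-volume cap $C_\kappa^n(L_0)$: the spherical-cap-symmetric Laplace eigenfunctions on $C_\kappa^n(L_0)$ are of the form $\cos^{2-n}(\theta/2)\,\mathcal F_{+}(\sin^{2}(\theta/2),\lambda,\kappa,n)$, and the Dirichlet boundary condition at $\theta=\sqrt{\kappa}L_{0}$ becomes $\mathcal F_{+}(\alpha_{L_0},\lambda,\kappa,n)=0$. Since $\lambda_{\kappa,n}(0,\alpha_{L})^{4}=\Lambda_{\kappa}(C_\kappa^n(L))$ is the clamped plate tone, membership $L\in\mathcal H_{\kappa,n}$ is equivalent to the square of the Laplacian tone on the half-cap being strictly smaller than the biharmonic tone on the full cap. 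Thus (i) reduces to exhibiting a single such $L$, whereas (ii) requires ruling this out for every $L\in(0,\pi/\sqrt{\kappa})$.

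\textbf{Part (i).} I plan to run the small-$L$ asymptotics. From $V_\kappa(C_\kappa^n(r))=\omega_n r^{n}+O(r^{n+2})$ and the half-volume condition $2V_\kappa(C_\kappa^n(L_0))=V_\kappa(C_\kappa^n(L))$, one gets $L_0=L/2^{1/n}+O(L^{3})$. Applying Proposition~\ref{hipergeometrikus-Bessel-0-1} with $t=L_0^{2}$, $x=\sqrt{\kappa}$, $\mu=\frac{n}{2}-1$ to $\mathcal F_{+}(\alpha_{L_0},\mathfrak f_{\kappa,n,1}(\alpha_{L_0}),\kappa,n)=0$ yields $\mathfrak f_{\kappa,n,1}(\alpha_{L_0})\,L_0\to\mathfrak j_{\frac{n}{2}-1}$; combined with the small-cap asymptotic \eqref{small-caps}, $\lambda_{\kappa,n}(0,\alpha_L)\,L\to\mathfrak h_{\frac{n}{2}-1}$, one obtains
\[
\frac{\mathfrak f_{\kappa,n,1}(\alpha_{L_0})}{\lambda_{\kappa,n}(0,\alpha_L)}\longrightarrow \frac{2^{1/n}\,\mathfrak j_{\frac{n}{2}-1}}{\mathfrak h_{\frac{n}{2}-1}}\quad\text{as}\ L\to 0,
\]
which is strictly less than $1$ exactly when $w_n<1$ in the sense of \eqref{w-n-limit-ertek}, i.e.\ for every $n\ge 4$ by the Ashbaugh--Laugesen inequality. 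Hence a punctured right neighborhood of $0$ lies in $\mathcal H_{\kappa,n}$, proving (i).

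\textbf{Part (ii).} The same small-$L$ computation supplies the \emph{reverse} strict inequality $2^{1/n}\mathfrak j_{\frac{n}{2}-1}>\mathfrak h_{\frac{n}{2}-1}$ for $n\in\{2,3\}$ (the classical Ashbaugh--Benguria condition that drives the Euclidean Rayleigh conjecture in low dimensions), so a neighborhood of $0$ is excluded from $\mathcal H_{\kappa,n}$. At the other end $L\to\pi/\sqrt{\kappa}$, the half-volume condition forces $\alpha_{L_0}\to 1/2$, so Proposition~\ref{Ferrers-basic-lemma-1}(vi) gives $\mathfrak f_{\kappa,n,1}(\alpha_{L_0})\to\sqrt{n\kappa}$, while Theorem~\ref{aszimptotak-CP-pozitiv}(ii) gives $\lambda_{\kappa,n}(0,\alpha_L)\to\sqrt{\mu_n\kappa}$, and the inequalities $2>\mu_2\approx 0.9125$ and $3>\mu_3\approx 1.0277$ close the comparison at the boundary. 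To bridge the intermediate range I plan to exploit the low-dimensional closed forms: for $n=3$, identity \eqref{explicit3dimenzioban} recasts $\mathcal K_{\kappa,3}(\alpha_L,\lambda)=0$ (with $\lambda>\sqrt{\kappa}$, which is forced in the proof of Theorem~\ref{aszimptotak-CP-pozitiv}) as
\[
\sqrt{\mu-1}\,\coth\!\bigl(\sqrt{\mu-1}\,\sqrt{\kappa}\,L\bigr)=\sqrt{\mu+1}\,\cot\!\bigl(\sqrt{\mu+1}\,\sqrt{\kappa}\,L\bigr),\qquad \mu=\lambda_L^{2}/\kappa,
\]
and the target inequality reduces to the one-variable estimate $\mu\le \pi^{2}/(\kappa L_0^{2})-1$ under the explicit volume relation $\sqrt{\kappa}L-\tfrac{1}{2}\sin(2\sqrt{\kappa}L)=2\sqrt{\kappa}L_0-\sin(2\sqrt{\kappa}L_0)$, which can be handled by calculus in one variable. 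For $n=2$, the analogous reduction uses the Legendre-function rewriting from \S\ref{section-5} together with the monotonicity statements in Proposition~\ref{Ferrers-basic-lemma-1}(v) to compare $\lambda_L$ with the first zero of the corresponding Legendre-type denominator.

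\textbf{Main obstacle.} The delicate point is the interpolation step in (ii): while the asymptotics at $L\to 0$ and $L\to\pi/\sqrt{\kappa}$ are clean, ruling out any crossing of the curves $L\mapsto\mathfrak f_{\kappa,n,1}(\alpha_{L_0})$ and $L\mapsto\lambda_{\kappa,n}(0,\alpha_L)$ in between is tight, because in $n=3$ even the Euclidean limit $2^{1/3}\mathfrak j_{1/2}>\mathfrak h_{1/2}$ holds only by a narrow margin. This is the same low-dimensional sharpness phenomenon that restricts the classical Talenti--Ashbaugh--Benguria argument to dimensions $2$ and $3$, and the hypergeometric/Legendre monotonicity collected in Proposition~\ref{Ferrers-basic-lemma-1} is expected to be the decisive input that makes the interpolation work only in these two dimensions.
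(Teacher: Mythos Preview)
Your treatment of (i) is correct and matches the paper's proof exactly: small-$L$ asymptotics give
$\mathfrak f_{\kappa,n,1}(\alpha_{L_0})/\lambda_{\kappa,n}(0,\alpha_L)\to 2^{1/n}\mathfrak j_{\frac{n}{2}-1}/\mathfrak h_{\frac{n}{2}-1}$,
and this ratio is $<1$ precisely for $n\ge 4$. Your endpoint analysis for (ii) --- the ratio exceeds $1$ both as $L\to 0$ (Ashbaugh--Benguria) and as $L\to\pi/\sqrt{\kappa}$ (via $\mathfrak f_{\kappa,n,1}(1/2)=\sqrt{n\kappa}$ versus $\sqrt{\mu_n\kappa}$) --- is also identical to the paper's.

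The gap is the interpolation in (ii), which you correctly flag as the main obstacle but do not actually close. For $n=3$ you say the one-variable comparison ``can be handled by calculus''; the paper's proof shows this is not routine. It rewrites the non-crossing as $p(x)>q(x)$ on $(0,\pi/2)$, where $p$ is defined implicitly by the half-volume relation $2(2x-\sin 2x)=2p-\sin 2p$ and $q$ by the transcendental clamped-plate equation, and then separates the two curves by a chain of three explicit linear functions $r_1(x)=2^{1/3}x$, $r_2$, $r_3$, checking on each subinterval that $p>r_i>q$. No clean monotonicity does the job; the argument is a hands-on case split.

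For $n=2$ your proposed mechanism differs from the paper's and is unlikely to work as stated. Proposition~\ref{Ferrers-basic-lemma-1}(v) controls the $t$-monotonicity of $w_\pm/v_\pm$; it does not by itself compare the first $\lambda$-zero of $\mathcal F_+(t/2,\cdot)$ with the first $\lambda$-zero of $\mathcal K_{\kappa,2}(t,\cdot)$, which is what you need. The paper instead exploits the $2$-dimensional identity $2\alpha_{L_0}=\alpha_L$ to reduce the question to a comparison of degree-zeros of the Legendre functions $\LP^0_\nu$ and $\LP^1_\nu$, and then appeals to the tabulated zeros of these functions (Bauer; Zhang--Jin) to obtain the uniform quantitative bound $\mathfrak f_{\kappa,2,1}(t/2)-\lambda_{\kappa,2}(0,t)>\sqrt{\kappa}/5$ for all $t\in(0,1)$. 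In short, the $n=2$ case in the paper is settled by a Legendre reformulation plus a numerical/tabular verification, not by an abstract monotonicity argument.
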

\noindent {\it Proof.} 
(i) We first claim that $\mathcal H_{\kappa,n}$ does not contain elements close to ${\pi}/{\sqrt{\kappa}}$ for every $n\geq 2.$ Indeed,  for the half-cap radius $L_0$ associated with $L>0$ we have that $L_0\to \frac{\pi}{2\sqrt{\kappa}}$  whenever $L\to \frac{\pi}{\sqrt{\kappa}}$. Therefore, $\alpha_{L_0}=\sin^2\left(\frac{\sqrt{\kappa} L_0}{2}\right)\to \sin^2\left(\frac{\pi}{4}\right)=\frac{1}{2}$, and according to Proposition \ref{Ferrers-basic-lemma-1}/(vi) we have that 
\begin{equation}\label{f-rrr-esztimate}
	\mathfrak  f_{\kappa,n,1}(\alpha_{L_0})\to \sqrt{n\kappa}\ \ {\rm  as}\ \  L\to \frac{\pi}{\sqrt{\kappa}}.
\end{equation}
 On the other hand,  Theorem \ref{aszimptotak-CP-pozitiv}/(ii) yields that
\begin{equation}\label{lambda-rrr-esztimate}
\lambda_{\kappa,n}(0,\alpha_{L})=	\Lambda^\frac{1}{4}_{\kappa}(C_\kappa^n(L))\sim  \left\{ \begin{array}{lll}
		\sqrt{\mu_n \kappa} &\mbox{if} & n\in \{2,3\}, \\
		0 &\mbox{if} &  n\geq 4,
	\end{array}\right. \ \ {\rm as}\ \ L\to \frac{\pi}{\sqrt{\kappa}},
\end{equation}
where  $\mu_2\approx 0.9125$ and $\mu_3\approx 1.0277$. Therefore, $\mathfrak  f_{\kappa,n,1}(\alpha_{L_0})>  \lambda_{\kappa,n}(0,\alpha_{L})$ if $L$ is sufficiently close  to ${\pi}/{\sqrt{\kappa}},$ which concludes the claim.

Again by \eqref{half-cap-radius}, the half-cap radius $L_0$ associated with $L>0$ verifies the asymptotic property  $2L_0^n\sim L^n$ whenever $L\ll 1$. Moreover, a similar argument as in \S \ref{section-small-caps} yields that $\mathfrak  f_{\kappa,n,1}(\alpha_{L_0})\sim {{C}/{L_0}}$ as $L\to 0$ with $J_{\frac{n}{2}-1}(C)=0$, i.e., $C={\mathfrak j}_{\frac{n}{2}-1}$. Thus, by 
Theorem \ref{aszimptotak-CP-pozitiv}/(i) it follows that 
\begin{equation}\label{1-hatarertek}
	\liminf_{L\to 0}\frac{\mathfrak  f_{\kappa,n,1}(\alpha_{L_0})}{\lambda_{\kappa,n}(0,\alpha_{L})}=2^\frac{1}{n}\frac{ {\mathfrak j}_{\frac{n}{2}-1}}{\mathfrak h_{\frac{n}{2}-1}}.
\end{equation}
We observe that $2^\frac{1}{n}\frac{{\mathfrak j}_{\frac{n}{2}-1}}{\mathfrak h_{\frac{n}{2}-1}}<1$ if and only if $n\geq 4.$ In particular, 
for every $n\geq 4$, we obtain that $\mathfrak  f_{\kappa,n,1}(\alpha_{L_0})< \lambda_{\kappa,n}(0,\alpha_{L})$ for sufficiently small $L>0$, proving that 
  $\mathcal H_{\kappa,n}\neq \varnothing$; this concludes the proof of (i). 
  
   (ii) The previous arguments show  that for $n\in \{2,3\}$ the set $\mathcal H_{\kappa,n}$ does not contain elements  in the vicinity of $0$ or close to  ${\pi}/{\sqrt{\kappa}}$. We will prove that this property persists to the whole interval $\left(0,{\pi}/{\sqrt{\kappa}}\right)$ whenever $n\in \{2,3\}$. To do this, it is enough to prove that the equation 
 $ \mathfrak  f_{\kappa,n,1}(\alpha_{L_0})= \lambda_{\kappa,n}(0,\alpha_{L})$ is not solvable in $L\in \left(0,{\pi}/{\sqrt{\kappa}}\right)$ for $n\in \{2,3\}$, where $\alpha_L=\sin^2\left(\frac{\sqrt{\kappa} L}{2}\right)$,   $\alpha_{L_0}=\sin^2\left(\frac{\sqrt{\kappa} L_0}{2}\right)$ and  $L_0>0$ is the half-cap radius associated with $L>0.$ Our proof is dimension-dependent.  
 
\textit{The case  $n=3$.} Using \eqref{g-sajatertekek} and $\alpha_{L_0}=\sin^2\left(\frac{\sqrt{\kappa} L_0}{2}\right)$, we recall  that
 	\begin{equation}\label{g-sajatertekek-1}
 	{\mathfrak  f}_{\kappa,3,1}(\alpha_{L_0})=\sqrt{\kappa\left(\frac{\pi^2}{\kappa L_0^2}-1\right)}.
 \end{equation}
Due to relations \eqref{g-sajatertekek-1} and \eqref{explicit3dimenzioban}, the solvability of $ \mathfrak  f_{\kappa,3,1}(\alpha_{L_0})= \lambda_{\kappa,3}(0,\alpha_{L})$ reduces to the equation 
$\mathcal K_{\kappa,3}(\alpha_{L},\mathfrak  f_{\kappa,3,1}(\alpha_{L_0}))=0$, i.e., 
\begin{equation}\label{3-dim-elso}
	\sqrt{\frac{\pi^2}{\kappa L_0^2}-2}\cdot\coth\left(\sqrt{\kappa}L\sqrt{\frac{\pi^2}{\kappa L_0^2}-2}\right)-\frac{\pi}{\sqrt{\kappa}L_0}\cdot \cot\left(\frac{\pi L}{L_0}\right)=0,
\end{equation}
where $L$ and $L_0$ verify \eqref{half-cap-radius}, i.e., 
\begin{equation}\label{3-dim-masodik}
2\left(L_0-\frac{\sin(2\sqrt{\kappa}L_0)}{2\sqrt{\kappa}}\right)=L-\frac{\sin(2\sqrt{\kappa}L)}{2\sqrt{\kappa}}.
\end{equation}
Our aim is to prove that equalities \eqref{3-dim-elso} and \eqref{3-dim-masodik} are incompatible, which will imply $\mathcal H_{\kappa,3}=\varnothing$. If  $x\coloneqq \sqrt{\kappa}L_0$ and $y\coloneqq \sqrt{\kappa}L$,  the implicit function theorem together with \eqref{3-dim-masodik} implies the existence of  a (unique) differentiable function $p:(0,\pi/2)\to (0,\pi)$ such that $P(x,p(x))=0$   for every $x\in (0,\pi/2)$, where $P:(0,\pi/2)\times (0,\pi)\to \mathbb R$ is \ $$ P(x,y)\coloneqq2\left(2x-{\sin(2x)}\right)-2y+\sin(2y).$$ 
Let us also  consider 
$Q:(0,\pi/2)\times (0,\infty)\setminus S_Q\to \mathbb R$ defined by $$Q(x,y)\coloneqq	\sqrt{{\pi^2}-2{x^2}}\cdot\coth\left(y\sqrt{\frac{\pi^2}{x^2}-2}\right)-{\pi } \cot\left(\frac{\pi y}{x}\right),$$
where $S_Q\coloneqq\{(x,y)\in (0,\pi/2)\times (0,\infty):y/x\in \mathbb N\}$.  In order to prove the claim, it is enough to show that $p(x)>  q(x)$ for every $x\in (0,\pi/2)$, where  $q:(0,\pi/2)\to (0,\pi)$ is the smallest differentiable function such that $Q(x,q(x))=0$   for every $x\in (0,\pi/2)$, see \eqref{3-dim-elso}. 
The idea of the proof is to separate $p$ and $q$ by \textit{piece-wise linear} functions; this fact is motivated by the monotonicity property of $x\mapsto Q(x,cx)$ whenever $c>0,$ which reduces to the monotonicity of $y\mapsto y\coth(y)$ on $(0,\infty)$.  
The separation argument can be described as follows. 

Since $p(x)\sim 2^\frac{1}{3}x=:r_1(x)$ as $x\to 0$, it turns out that $\lim_{x\to 0} Q(x,p(x))=\pi \coth\big( 2^\frac{1}{3}\pi\big)-\pi \cot\big( 2^\frac{1}{3}\pi\big)\approx 0.192>0$, thus $Q(x,p(x))\neq 0$ for small values of $x$; in fact, this property can be also deduced by \eqref{1-hatarertek}. On one hand, we observe that $p(x)> r_1(x)$ for every $x\in (0,\pi/2)$. Indeed, an elementary argument shows that $P(x,r_1(x))>0$ for every $x\in (0,\pi/2)$ and $p(\pi/2)=\pi>2^{-\frac{2}{3}}\pi=r_1(\pi/2)$. Thus, if there exists $\widetilde x\in (0,\pi/2)$ such that $p(\widetilde x)\leq r_1(\widetilde x)$, by a continuity reason there exists $\overline{x}\in [\widetilde x,\pi/2)$ such that $p(\overline x)=r_1(\overline x)$, and $0=P(\overline x,p(\overline x))=P(\overline x,r_1(\overline x))>0$, which is a contradiction. On the other hand,
we have that the function $x\mapsto Q(x,r_1(x))$ is decreasing on $(0,\pi/2)$ having its unique zero at $x_1\approx 0.767.$  If $c\approx 1.249$ is the first positive zero of the equation $\coth(c\pi )=\cot(c\pi )$, it follows that  $\lim_{x\to 0} Q(x,cx)=\pi \coth( c\pi)-\pi \cot( c\pi)=0$; taking into account that $c<2^\frac{1}{3}\approx1.259$ and $Q(x,q(x))=0$, the minimality property of $q$ implies that 
 $q(x)<r_1(x)$ for small values of $x>0$. By construction, it follows that $q(x_1)=r_1(x_1)=2^\frac{1}{3}x_1\approx 0.967$. Thus, if there exists $\widetilde x\in (0,x_1)$ with $q(\widetilde x)\geq r_1(\widetilde x)$, one can find $\overline{x}\in (0,\widetilde x]$ such that $q(\overline x)=r_1(\overline x)$, so $0=Q(\overline x,q(\overline x))=Q(\overline x,r_1(\overline x))>0$, which is a contradiction. Therefore,  $p(x)> r_1(x)> q(x)$ for every $x\in (0,x_1).$ Clearly, we also have $p(x_1)> r_1(x_1)= q(x_1)$.
 
 The function $r_1$ cannot separate $p$ and $q$ beyond the value $x_1$. Therefore, we consider  $r_2(x)\coloneqq \frac{p(x_1)}{x_1}x $ for every $x\in [x_1,\pi/2),$  and  a similar argument as above shows that $p(x)> r_2(x)> q(x)$ for every $x\in (x_1,x_2),$ where $x_2\approx 1.462$ is the intersection point of $r_2$ and $q$. Moreover, $p(x_2)> r_2(x_2)= q(x_2)$. Finally, the function $r_3(x)\coloneqq \frac{p(x_2)}{x_2}x $, $x\in [x_2,\pi/2),$ has the property that $p(x)> r_3(x)> q(x)$ for every $ x\in (x_2,\pi/2).$ 
 
 Summing up, we have $p(x)>  q(x)$ for every $ x\in (0,\pi/2),$  which proves that $\mathcal H_{\kappa,3}=\varnothing.$


\textit{The case  $n=2$.} 
Due to \eqref{half-cap-radius}, in the case $n=2$  we have that
\begin{equation}\label{fel-2dim}
2\alpha_{L_0}=2\sin^2\left(\frac{\sqrt{\kappa} L_0}{2}\right)=\frac{\kappa}{2\pi}V_\kappa(C_\kappa^2(L_0))=\frac{\kappa}{4\pi}V_\kappa(C_\kappa^2(L))=\sin^2\left(\frac{\sqrt{\kappa} L}{2}\right)=\alpha_{L}
.	
\end{equation}
Thus the question reduces to the non-solvability of 
$ \mathfrak  f_{\kappa,2,1}(t/2)= \lambda_{\kappa,2}(0,t)$  in $t\in \left(0,1\right)$. On one hand, by \eqref{f-rrr-esztimate}--\eqref{1-hatarertek} we know that this equation cannot be solved for values $t$ close to $0$ and $1$. On the other hand, due to \eqref{P-hypergoemetric}, \eqref{inversion} and \eqref{F-differential-2} (since $n=2$), we obtain that 
$$\mathcal F_\pm(t,\lambda,\kappa,2)=\LP_{-\frac{1}{2}+\Lambda_\pm}^0(1-2t)\ \ {\rm and}\ \ \mathcal F'_\pm(t,\lambda,\kappa,2)=\frac{1}{\sqrt{(1-t)t}}\LP_{-\frac{1}{2}+\Lambda_\pm}^{1}(1-2t),\ t\in (0,1),$$
where $\Lambda_\pm=\sqrt{\frac{1}{4}\pm \frac{\lambda^2}{\kappa}};$ see also Zhurina and Karmazina \cite{ZK}. In particular,  $\mathcal K_{\kappa,2}(t,\lambda)=0$ can be transformed into 
an equation containing only the associated Legendre functions $\LP_{-\frac{1}{2}+\Lambda_\pm}^r$  of integer orders $r\in \{0,1\}$. 
Thus, the  tables of zeros  with respect to the degree $\nu=-\frac{1}{2}+\Lambda_\pm$, see Bauer \cite{Bauer} (and also  Zhang and Jin \cite[Chapter 4]{Zhang-Jin}) imply that 
$$\mathfrak  f_{\kappa,2,1}\left(\frac{t}{2}\right)- \lambda_{\kappa,2}(0,t)> \frac{\sqrt{\kappa}}{5}, \ \forall t\in (0,1),\ \kappa>0.$$ 
Therefore, the latter estimate and relation \eqref{fel-2dim} imply that $\mathcal H_{\kappa,2}=\varnothing.$ \hfill $\square$

\begin{remark}\rm 
	Numerical tests show that the function $L\mapsto \frac{\mathfrak  f_{\kappa,n,1}(\alpha_{L_0})}{\lambda_{\kappa,n}(0,\alpha_{L})}$ is increasing on $\left(0,{\pi}/{\sqrt{\kappa}}\right)$ for every $n\in \mathbb N_{\geq 2}$ and $\kappa>0$, where $L_0$ is the half-cap radius  associated with $L>0$. If this statement indeed holds, we can present an alternative proof for Proposition \ref{utolso-proposition}/(ii). Indeed, by the assumed monotonicity and \eqref{1-hatarertek} we would have for every $L\in \left(0,{\pi}/{\sqrt{\kappa}}\right)$ that
	$$
	\frac{\mathfrak  f_{\kappa,n,1}(\alpha_{L_0})}{\lambda_{\kappa,n}(0,\alpha_{L})}\geq \liminf_{L\to 0} \frac{\mathfrak  f_{\kappa,n,1}(\alpha_{L_0})}{\lambda_{\kappa,n}(0,\alpha_{L})}=2^\frac{1}{n}\frac{ {\mathfrak j}_{\frac{n}{2}-1}}{\mathfrak h_{\frac{n}{2}-1}}=\left\{\def\arraystretch{1.9} \begin{array}{@{~}l@{~}l@{~}l@{~}l@{~}lll}
		2^{\frac{1}{2}}\frac{{\mathfrak j}_{0}}{\mathfrak  h_{0}}&\approx & 2^{\frac{1}{2}}\frac{ 2.4048}{3.1962}&\approx& 1.064>1 &\mbox{if} &  n=2,\\
		2^{\frac{1}{3}}\frac{{\mathfrak j}_\frac{1}{2}}{\mathfrak  h_\frac{1}{2}}&\approx& 2^{\frac{1}{3}}\frac{\pi}{3.9266}&\approx & 1.008>1 &\mbox{if} &  n=3,
	\end{array}\right.$$
thus $\mathcal H_{\kappa,2}=\mathcal H_{\kappa,3}=\varnothing.$ 
\end{remark}


\noindent {\it Proof of Theorem  \ref{fotetel-CP-pozitiv}.} Let $(M,g)$ be a compact  $n$-dimensional  Riemannian manifold  with ${\sf Ric}_{(M,g)}\geq (n-1)\kappa>0$, consider   a smooth domain $\Omega\subset M$ and let $\Omega^\star\subset \mathbb S^n_\kappa$ be a  spherical cap for which the conditions $$\frac{V_g(\Omega)}{V_g(M)}=\frac{V_{\kappa}(\Omega^\star)}{V_{\kappa}(\mathbb S^n_\kappa)}$$ and  $V_\kappa(\Omega^\star)=V_\kappa(C_\kappa^n(L))$ are satisfied for some $L\in \left(0,{\pi}/{\sqrt{\kappa}}\right)$.

On one hand,  Proposition \ref{utolso-proposition}/(ii) implies that inequality \eqref{necessarily-1} is verified for $n\in \{2,3\}$ and any $L\in \left(0,{\pi}/{\sqrt{\kappa}}\right)$; in particular, the (sufficiency) argument at the end of the previous subsection shows that Lord Rayleigh's conjecture is true, i.e., \eqref{foegyenlotlenseg} is valid  for any domain $\Omega\subset M$ whenever  $n\in \{2,3\}$.

On the other hand, when $n\geq 4$, it turns out by the proof of Proposition \ref{utolso-proposition}/(i) that Lord Rayleigh's conjecture holds true on $(M,g)$ for any domain $\Omega\subset M$ with ${V_g(\Omega)}> v_{n,\kappa}{V_g(M)}$, where 
\begin{equation}\label{v-n-kappa-L}
v_{n,\kappa}=\dfrac{V_{\kappa}\big(C_\kappa^n(L_{n,\kappa})\big)}{V_{\kappa}(\mathbb S^n_\kappa)}\in (0,1)\ \ {\rm and}\ \ L_{n,\kappa}\coloneqq \sup \mathcal H_{\kappa,n}\in \left(0,\frac{\pi}{\sqrt{\kappa}}\right).
\end{equation}
In addition, note that for every $\alpha\in (0,1)$, the expressions 
$f_{\kappa,n,1}(\alpha)/\sqrt{k}$ and $ \lambda_{\kappa,n}(0,\alpha)/\sqrt{\kappa}$ are $\kappa$-independent. Therefore, $L_{n,\kappa}=L_n/\sqrt{\kappa}$ for some $\kappa$-independent value $L_n\in (0,\pi)$, and a simple computation shows that $v_n\coloneqq v_{n,\kappa}$ does not depend on $\kappa>0.$

Assume that equality holds in \eqref{foegyenlotlenseg} for some $\Omega\subset M$. By the proof of Theorem \ref{talenti-result} (see the verification of inequalities \eqref{1-compar} and \eqref{2-compar}), we should have equality in the L\'evy--Gromov inequality \eqref{Levy--Gromov-inequal} for a.e.\ admissible $t>0$. In particular, this equality implies that $(M,g)$ is isometric to $(\mathbb S_\kappa^n,g_{\kappa})$, and the sets $\{x\in\Omega_\pm:u_\pm(x)>t \}$ and $\{x\in \Omega_\pm^\star:u_\pm^\star(x)>t \}$ are isometric for a.e.  $t\in \big[0,T_u^\pm\big]$. Since only one set of $\Omega_+$ and $\Omega_-$ remains (say  $\Omega_+$, cf. subsection \S \ref{subsection-reduction}), it turns out that  $\Omega=\Omega_+\subset M$ is isometric to the spherical cap $\Omega^\star=\Omega_+^\star\subset \mathbb S_\kappa^n$. The converse statement trivially holds.  

Finally, let $L^0_{n,\kappa}$ be the half-cap radius  associated with $L_{n,\kappa}>0$.  If we assume that $v_\infty\coloneqq\displaystyle\limsup_{n\to \infty} v_{n}=1$, which is equivalent to $\displaystyle\limsup_{n\to \infty} L_{n,\kappa}={\pi}/{\sqrt{\kappa}},$  a similar reasoning as in  \eqref{f-rrr-esztimate}--\eqref{lambda-rrr-esztimate} implies  that
$$0=\limsup_{n\to \infty}\lambda_{\kappa,n}(0,\alpha_{L_{n,\kappa}})=\limsup_{n\to \infty}\mathfrak  f_{\kappa,n,1}(\alpha_{L^0_{n,\kappa}})=\limsup_{n\to \infty}\sqrt{n\kappa}= +\infty,$$ which is a contradiction. Therefore, $v_\infty=\displaystyle\limsup_{n\to \infty} v_{n}<1$, which concludes the proof. 
\hfill $\square$


 
 \begin{remark}\rm  Theorem  \ref{fotetel-CP-pozitiv} states in  particular that in high-dimensions Lord Rayleigh's conjecture is true for large clamped plates on any compact Riemannian manifold $(M,g)$ of positive Ricci curvature. Note that the inequality $v_\infty<1$  implies that these clamped plates need not be very `close'  to the whole manifold; quantitatively, the arguments are valid for clamped plates $\Omega\subset M$ with $\frac{V_g(\Omega)}{V_g(M)}\in (v_\infty,1]$.   
 Numerical tests show that $\displaystyle\limsup_{n\to \infty} L_{n,\kappa}={\pi}/({2\sqrt{\kappa}}),$ thus 
  $v_\infty=1/2$, see
 	Table \ref{table-2}, which would imply that clamped plates $\Omega\subset M$ with at least  `half-measure' of $M$ verify Lord Rayleigh's conjecture.   
 \end{remark}
 \vspace{-0.3cm}

{
\setlength{\abovecaptionskip}{1pt}
\setlength{\belowcaptionskip}{-1pt}

\setlength{\abovedisplayskip}{0pt}
\setlength{\belowdisplayskip}{0pt}
\setlength{\abovedisplayshortskip}{0pt}
\setlength{\belowdisplayshortskip}{0pt}

	\renewcommand{\arraystretch}{1.8}	
\providecommand{\tabularnewline}{\\}
\begin{table}[H]
	\centering
	\begin{tabular}{|c||>{\centering}p{1.0cm}|>{\centering}p{1.0cm}|>{\centering}p{1.0cm}|>{\centering}p{1.0cm}|>{\centering}p{1.0cm}|>{\centering}p{1.0cm}|>{\centering}p{1.0cm}|>{\centering}p{1.0cm}|>{\centering}p{1.0cm}|>{\centering}p{1.0cm}|>{\centering}p{1.0cm}|>{\centering}p{1.0cm}|}
		\hline 
		\cellcolor{green!46}	$n$	& \cellcolor{green!24} 2 \& 3 &  \cellcolor{green!20} 4 & \cellcolor{green!18} 5 &\cellcolor{green!16}  6 & \cellcolor{green!14} 7& \cellcolor{green!12} 10 & \cellcolor{green!10}  50 & \cellcolor{green!8} 100 & \cellcolor{green!6} 200 & \cellcolor{green!4} 500& \cellcolor{green!2} 1000\tabularnewline
		\hline
		\cellcolor{red!46}	
		$L_{n,\kappa}$ &0 \cellcolor{red!24}	  \cellcolor{red!24}	 &\cellcolor{red!20}	 $\frac{0.27\pi}{\sqrt{\kappa}}$ &\cellcolor{red!18}	 $\frac{0.355\pi}{\sqrt{\kappa}}$  &\cellcolor{red!16} $\frac{0.394\pi}{\sqrt{\kappa}}$  &\cellcolor{red!14} $\frac{0.417\pi}{\sqrt{\kappa}}$ &\cellcolor{red!12} $\frac{0.448\pi}{\sqrt{\kappa}}$&\cellcolor{red!10} $\frac{0.487\pi}{\sqrt{\kappa}}$ &\cellcolor{red!8} $\frac{0.492\pi}{\sqrt{\kappa}}$&\cellcolor{red!6} $\frac{0.495\pi}{\sqrt{\kappa}}$&\cellcolor{red!4} $\frac{0.4969\pi}{\sqrt{\kappa}}$&\cellcolor{red!2} $\frac{0.4981\pi}{\sqrt{\kappa}}$\tabularnewline
		\hline 
		\cellcolor{teal!46}	$v_{n}$  &\cellcolor{teal!24} 0&\cellcolor{teal!20} 0.0763&\cellcolor{teal!18} 0.1616 &\cellcolor{teal!16} 0.2146 &\cellcolor{teal!14}0.2515 &\cellcolor{teal!12} 0.3067&\cellcolor{teal!10} 0.3869&\cellcolor{teal!8}0.401&\cellcolor{teal!6} 0.4122&\cellcolor{teal!6} 0.4138&\cellcolor{teal!2} 0.4252
		\tabularnewline
		
		\hline 
	\end{tabular}

	\vspace*{0.2cm}
	\caption{Values of $L_{n,\kappa}$  and $v_{n}$ from \eqref{v-n-kappa-L} in certain dimensions $n\geq 2.$ 
}	\label{table-2}
\end{table}

}

%

%
%
%

	\section{Curvature limit in Lord Rayleigh's conjecture: proof of Theorem \ref{Huisken-CP}}\label{section-7}
	
Let $\Omega\subset M$ be a bounded open set in a complete non-compact $n$-dimensional Riemannian manifold $(M,g)$ with  ${\sf Ric}_{(M,g)}\geq 0$ and ${\sf AVR}_{(M,g)}> 0$, see \eqref{volume-ratio}. Due to the boundedness of $\Omega$, the injectivity radius is positive on $\Omega$, see Klingenberg \cite[Proposition 2.1.10]{Kli}, thus 
the Sobolev space  $H_0^{2}(\Omega)=W_0^{2,2}(\Omega)$ is an appropriate function space for the clamped problem on $\Omega$, see Hebey \cite[Proposition 3.3]{Hebey}. Moreover, the fundamental tone  $\Lambda_g(\Omega)$ defined by \eqref{variational-charact} is achieved by 
%
 a minimizer $u\in W_0^{2,2}(\Omega)$; in fact,  $u\in \mathcal C^\infty(\Omega)$. 
	
The spirit of the proof of Theorem \ref{Huisken-CP}  is similar to the one presented in \S \ref{subsection-ABNT}, by performing an 	Ashbaugh--Benguria--Nadirashvili--Talenti  nodal-decom\-po\-sition on $(M,g)$ combined  with an appropriate comparison argument that involves the asymptotic volume ratio ${\sf AVR}_{(M,g)}> 0$.  We outline the proof, by emphasizing the differences with respect to the arguments used in \S \ref{subsection-ABNT}.  

Let $u_+\coloneqq\max(u,0)$ and $u_-\coloneqq-\min(u,0)$ be the positive and negative parts of $u$, respectively, and consider their preimages  $\Omega_+\coloneqq\{x\in \Omega: u_+(x)>0\}$ and $\Omega_-\coloneqq\{x\in \Omega: u_-(x)>0\}$ as well. 
Assume that 
$V_g(\Omega_+)V_g(\Omega_-)>0;$ otherwise the proof reduces to the sign-definite case. 
Let $u_\pm^\star:\mathbb R^n\to [0,\infty)$ be the Euclidean radial rearrangements of $u_\pm:\Omega_\pm\to [0,\infty),$  i.e., 
for every $t>0,$ 
\begin{equation}\label{szimmetrizacio-U0}
	V_0(\{x\in \mathbb R^n:u_+^\star(x)>t \})=V_g(\{x\in\Omega:u_+(x)>t \})=:j_0(t),
\end{equation}
\begin{equation}\label{szimmetrizacio-U-00}
	V_0(\{x\in \mathbb R^n:u_-^\star(x)>t \})=V_g(\{x\in\Omega:u_-(x)>t \})=:h_0(t).
\end{equation}
The functions $u_\pm^\star$ are well-defined and radially symmetric having the property that  
\begin{equation}\label{alfa-r-t}
	\{x\in \mathbb R^n:u_+^\star(x)>t \}=B_0(r_t)\ \ {\rm and}\ \ \{x\in \mathbb R^n:u_-^\star(x)>t \}=B_0(\rho_t),\end{equation}
for some $r_t>0$ and $\rho_t>0$  with $V_0(B_0(r_t))=j_0(t)$ and $V_0(B_0(\rho_t))=h_0(t)$, respectively.  For further use, let $a,b\geq 0$ be such that 
$
V_0(B_0(a))=V_g(\Omega_+) $ and $  V_0(B_0(b))=V_g(\Omega_-).$
In particular, $a^n+b^n=L^n$ where $L>0$ is given by the condition $V_0(B_0(L))=V_g(\Omega)$.

We introduce the functions  $$\mathcal J_0(s)\coloneqq(\Delta_g u)^*_-(s)-(\Delta_g u)^*_+(V_0(B_0(L))-s)\ \ {\rm and}\ \ \mathcal H_0(s)\coloneqq-\mathcal J_0(V_0(B_0(L))-s),\ \ s\in [0,V_0(B_0(L))],$$	
where
\begin{equation}\label{J-J-notation-0}
	(\Delta_g u)^*_\pm(s)\coloneqq (\Delta_g u)^\star_\pm(x)\ \ {\rm with}\ \  s=\omega_n|x|^n,\ x\in B_0(L);
\end{equation}	
here $(\Delta_g u)^\star_\pm$ are the Euclidean radial rearrangements of $(\Delta_g u)_\pm$. Similarly as in Lemma \ref {basic=J-H}, we can prove that  
$$\ds\int_0^\sigma \mathcal J_0(s){\rm d}s\geq \ds\int_0^{V_0(B_0(L))} \mathcal J_0(s){\rm d}s=0,\ \ \int_0^\sigma \mathcal H_0(s){\rm d}s\geq \ds\int_0^{V_0(B_0(L))} \mathcal H_0(s){\rm d}s=0,\ \forall \sigma\in [0,V_0(B_0(L))],$$
and
\begin{equation}\label{basic=J-H-0}
	\ds\int_{B_0(a)} \mathcal J_0(\omega_n|x|^n){\rm d}x= \int_{B_0(b)} \mathcal H_0(\omega_n|x|^n){\rm d}x.
\end{equation}
Furthermore, analogously to Proposition \ref{f-hasonlitas}, we have that 
\begin{equation}\label{pro-4-1-a}
\ds\displaystyle\int_0^{j_0(t)} \mathcal J_0(s){\rm d}s\geq -\int_{\{u>t \}}\Delta_g u(x){\rm d}v_g(x),\ \forall  t\in [0,T_u^+],
\end{equation}
and
\begin{equation}\label{pro-4-1-b}
	\ds\displaystyle\int_0^{h_0(t)} \mathcal H_0(s){\rm d}s\geq -\int_{\{u<-t \}}\Delta_g u(x){\rm d}v_g(x),\ \forall  t\in [0,T_u^-],
\end{equation}
where $T_u^\pm\coloneqq \sup_{x\in \Omega_\pm}u_\pm(x)\geq 0,$ and either $(\Delta_g u)^*_-(s)=0$ or $ (\Delta_g u)^*_+(V_0(B_0(L))-s) = 0$ for every $s\in [0,V_0(B_0(L))]$.

The analogue of Theorem \ref{talenti-result} can be stated as follows. 

\begin{proposition} \label{talenti-result-0} The real functions  
	\begin{equation}\label{v-definit-1}
		w_a(x)\coloneqq\frac{1}{n\omega_n}\int_{|x|}^a \rho^{1-n}\left(\int_0^{\omega\rho^n}\mathcal J_0(s){\rm d}s\right){\rm d}\rho,\ x\in\Omega^\star,
		%
	\end{equation}
	and
	\begin{equation}\label{w-definit-2}
		w_b(x)\coloneqq\frac{1}{n\omega_n}\int_{|x|}^b \rho^{1-n}\left(\int_0^{\omega\rho^n}\mathcal H_0(s){\rm d}s\right){\rm d}\rho,\ x\in \Omega^\star,
	\end{equation} 
satisfy the following statements$:$  
	\begin{itemize}
		\item[(i)] $\displaystyle\int_{\Omega} (\Delta_g u)^2{\rm d}v_g=\int_{B_0(a)} (\Delta w_a)^2{\rm d}x+ \int_{B_0(b)} (\Delta w_b)^2{\rm d}x;$
		\item[(ii)] $\displaystyle{\sf AVR}_{(M,g)}^\frac{4}{n}\int_{\Omega}u^2 {\rm d}v_g\leq \int_{B_0(a)}w_a^2 {\rm d}x+\int_{B_0(b)}w_b^2 {\rm d}x.$
	\end{itemize}
\end{proposition}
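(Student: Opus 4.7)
The proof will closely parallel that of Theorem \ref{talenti-result}, with two substantive changes: (a) the L\'evy--Gromov isoperimetric inequality is replaced by the sharp isoperimetric inequality on non-negatively curved manifolds due to Brendle \cite{Brendle} and Balogh--Krist\'aly \cite{Balogh-Kristaly}, namely
\begin{equation*}
\mathcal{P}_g(E)\geq n\omega_n^{1/n}{\sf AVR}_{(M,g)}^{1/n}V_g(E)^{(n-1)/n}
\end{equation*}
for every smooth bounded $E\subset M$; and (b) rearrangements are taken in Euclidean space (without normalization) rather than on $\mathbb{S}^n_\kappa$.

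For part (i), a direct computation in radial coordinates shows that $w_a$ and $w_b$ are the unique radial weak solutions of
\begin{equation*}
-\Delta w_a(x)=\mathcal{J}_0(\omega_n|x|^n)\text{ in }B_0(a),\quad w_a=0\text{ on }\partial B_0(a),
\end{equation*}
and analogously for $w_b$ on $B_0(b)$. Squaring the Laplacians, using the change of variable $s=\omega_n\rho^n$, the identity $\mathcal{H}_0(s)=-\mathcal{J}_0(V_0(B_0(L))-s)$, and the fact that $a^n+b^n=L^n$, the left-hand side of (i) reduces to $\int_0^{V_0(B_0(L))}\mathcal{J}_0^2(s)\,ds$. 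Expanding the square and annihilating the cross term via the disjoint-support property (the Euclidean analogue of Proposition \ref{f-hasonlitas}/(iii), already noted immediately after \eqref{pro-4-1-b}), then invoking the equimeasurability $\int((\Delta_g u)^\star_{\pm})^2\,dx=\int_\Omega(\Delta_g u)_\pm^2\,dv_g$, yields (i). This step is cleaner than in the compact case, since no ratio $V_\kappa(\mathbb{S}_\kappa^n)/V_g(M)$ enters.

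For part (ii), I would first repeat the Cauchy--Schwarz/co-area scheme from Theorem \ref{talenti-result} in the \emph{unnormalized} setting $j_0(t)=V_g(\{u>t\})$, combined with the divergence theorem and \eqref{pro-4-1-a}, to obtain
\begin{equation*}
\mathcal{P}_g^2(\Lambda_t)\leq -j_0'(t)\int_0^{j_0(t)}\mathcal{J}_0(s)\,ds\quad\text{for a.e.\ }t\in[0,T_u^+].
\end{equation*}
Writing $r_t>0$ for the radius defined by $\omega_n r_t^n=j_0(t)$ (so that $j_0'(t)=n\omega_n r_t^{n-1}r_t'$), the Brendle--Balogh--Krist\'aly inequality reads $\mathcal{P}_g(\Lambda_t)\geq{\sf AVR}_{(M,g)}^{1/n}\,n\omega_n r_t^{n-1}$; substitution and simplification give
\begin{equation*}
1\leq -\frac{{\sf AVR}_{(M,g)}^{-2/n}}{n\omega_n}\,r_t^{1-n}r_t'\int_0^{\omega_n r_t^n}\mathcal{J}_0(s)\,ds.
\end{equation*}
Integrating from $0$ to $\eta$ and substituting $\rho=r_t$ (with $r_0=a$), then identifying $\eta=u_+^\star(x)$ for $|x|=r_\eta$ as in the compact case, produces the pointwise comparisons
\begin{equation*}
{\sf AVR}_{(M,g)}^{2/n}u_+^\star\leq w_a\text{ in }B_0(a),\quad{\sf AVR}_{(M,g)}^{2/n}u_-^\star\leq w_b\text{ in }B_0(b),
\end{equation*}
the second following analogously from \eqref{pro-4-1-b}. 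Squaring, integrating, and invoking the Euclidean equimeasurability $\int_{\Omega_\pm}u_\pm^2\,dv_g=\int(u_\pm^\star)^2\,dx$ then delivers (ii).

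The main obstacle is the careful bookkeeping of the ${\sf AVR}$ exponent: one factor of ${\sf AVR}^{1/n}$ enters via the isoperimetric inequality and squares to ${\sf AVR}^{2/n}$ in the perimeter estimate; it survives the Gronwall-type integration as a multiplicative constant, and finally squares once more to ${\sf AVR}^{4/n}$ in the integrated $L^2$ inequality, matching the exponent in both the statement of (ii) and inequality \eqref{Huisken-egyenlotlenseg}. A secondary expected subtlety is the derivation of \eqref{pro-4-1-a}--\eqref{pro-4-1-b} and the disjoint-support property for Euclidean rearrangements of $(\Delta_g u)_\pm$, both of which adapt routinely from the compact argument since the unnormalized Hardy--Littlewood--P\'olya inequality holds directly without rescaling.
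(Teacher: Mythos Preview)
Your proposal is correct and follows essentially the same route as the paper: both replace the L\'evy--Gromov inequality by the Brendle/Balogh--Krist\'aly isoperimetric inequality, derive the pointwise comparison ${\sf AVR}_{(M,g)}^{2/n}u_\pm^\star\leq w_{a/b}$ via the co-area/Cauchy--Schwarz scheme combined with \eqref{pro-4-1-a}--\eqref{pro-4-1-b}, and then square and integrate using equimeasurability. Your bookkeeping of the ${\sf AVR}$ exponent and the disjoint-support argument for part (i) match the paper's treatment.
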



\noindent {\it Proof.} 
One can easily verify that $w_a$ and $w_b$ are solutions to the Dirichlet problems 
\begin{equation}\label{CP-problem-v-w}
	\left\{ \begin{array}{@{~}r@{~}l@{~}l@{~}l@{~}l@{~}}
		-\Delta w_a(x)&=&\mathcal J_0(\omega_n|x|^n) &\mbox{in} &  B_0(a); \\
		w_a&=&0  &\mbox{on} &  \partial B_0(a);
	\end{array}\right.
	\ {\rm and}\ 
	\left\{ \begin{array}{@{~}r@{~}l@{~}l@{~}l@{~}l@{~}}
		-\Delta w_b(x)&=&\mathcal H_0(\omega_n|x|^n) &\mbox{in} &  B_0(b); \\
		w_b&=&0  &\mbox{on} &  \partial B_0(b),
	\end{array}\right.
\end{equation}
respectively. Since (i) is similar to the proof of Theorem \ref{talenti-result}/(i), we focus on property (ii). To complete this part, we recall the following sharp isoperimetric inequality on $(M,g)$ (see   Brendle \cite{Brendle},  and Balogh and Krist\'aly \cite{Balogh-Kristaly}): 
for every bounded open subset $\Omega\subset M$ with smooth
boundary $\partial \Omega$, one has that
\begin{equation}\label{eq-isoperimetric-1}
	\mathcal P_g(\partial \Omega)\geq  n\omega_n^\frac{1}{n} \ {\sf AVR}_{(M,g)}^\frac{1}{n}V_g(\Omega)^\frac{n-1}{n},	
\end{equation}
where $\mathcal P_g(\partial \Omega)$ is the perimeter of $\partial \Omega$; moreover, equality holds in \eqref{eq-isoperimetric-1} if and only if ${\sf AVR}_{(M,g)}=1$, i.e.,  $(M,g)$ is isometric to $(\mathbb R^n,g_0)$  and $\Omega\subset M$ is isometric to a ball of volume $V_g(\Omega).$\footnote{Inequality \eqref{eq-isoperimetric-1}  has been proven first by Agostiniani, Fogagnolo and Mazzieri \cite{AFM} by using Huisken's mean curvature
	flows; note however that their argument works only in dimension $3$. Later on, Fogagnolo and Mazzieri \cite{Fogagnolo-Mazzieri} extended their arguments up to dimension 7.}

 We confine our argument only to $w_a$; the case of $w_b$ works similarly.  We  consider the sets 
 $$
 \Lambda_t^\star\coloneqq\partial(\{x\in \mathbb R^n: u_+^\star(x)>t \}),\ \ \Lambda_t\coloneqq\partial(\{x\in \Omega: u_+(x)>t \}),\ \ t\in [0,T_u^+].
 $$
Due to \eqref{szimmetrizacio-U0} and \eqref{eq-isoperimetric-1}, and since the balls in $(\mathbb R^n,g_0)$ are the isoperimetric sets,  we obtain that
 \begin{equation}\label{isop-Brendle-kovetk}
  \mathcal P_g(\Lambda_t)\geq n\omega_n^\frac{1}{n} \ {\sf AVR}_{(M,g)}^\frac{1}{n}j_0(t)^\frac{n-1}{n}= \ {\sf AVR}_{(M,g)}^\frac{1}{n}\mathcal P_0(\Lambda_t^\star),\ \ t\in [0,T_u^+],	
 \end{equation}
where $\mathcal P_0(\Lambda_t^\star)$ denotes the Euclidean perimeter of  the sphere $\Lambda_t^\star\subset \mathbb R^n$.

A similar argument as in the proof of \eqref{peri-1} implies that  
\begin{equation}\label{13-as}
	\ds{\mathcal P}^2_g(\Lambda_t)\leq j_0'(t)\int_{\{u>t\}}\Delta_gu{\rm d}v_g\ \ {\rm for \ a.e.}\ t\in [0,T_u^+].
\end{equation}
Inequality \eqref{13-as} together with \eqref{pro-4-1-a} yields that
$$\ {\sf AVR}_{(M,g)}^\frac{2}{n}\mathcal P_0(\Lambda_t^\star)^2\leq -j_0'(t)\displaystyle\int_0^{j_0(t)} \mathcal J_0(s){\rm d}s\ \ {\rm for \ a.e.}\ t\in [0,T_u^+].$$ 
Since $j_0(t)=V_0(B_0(r_t))=\omega_nr_t^n$, see \eqref{alfa-r-t}, it turns out that $j_0'(t)=\mathcal P_0(\Lambda_t^\star)r_t'=n\omega_nr_t^{n-1}r_t'$ for a.e.\ $t\in [0,T_u^+]$. Therefore, we obtain the inequality 
\begin{equation}\label{utolsok-kozott}
 {\sf AVR}_{(M,g)}^\frac{2}{n}n\omega_n\leq - r_t^{1-n}r_t'\displaystyle\int_0^{\omega_nr_t^n} \mathcal J_0(s){\rm d}s\ \ {\rm for \ a.e.}\ t\in [0,T_u^+].	
\end{equation}
If $x\in B_0(a)$ is arbitrarily fixed, one can find a unique value $\eta \in [0,T_u^+]$ such that $|x|=a_\eta$; moreover, by construction, $u_+^\star(x)=\eta$. Accordingly, by integrating the  inequality \eqref{utolsok-kozott} on $[0,\eta]$ and performing  a change of variables, one can conclude  that 
$$\ {\sf AVR}_{(M,g)}^\frac{2}{n}u_+^\star(x) \leq \frac{1}{n\omega_n}\int_{|x|}^a \rho^{1-n}\left(\int_0^{\omega\rho^n}\mathcal J_0(s){\rm d}s\right){\rm d}\rho= w_a(x).$$
In a similar way, by using \eqref{alfa-r-t} and  \eqref{pro-4-1-b}, we obtain that 
$$\ {\sf AVR}_{(M,g)}^\frac{2}{n}u_-^\star(x) \leq \frac{1}{n\omega_n}\int_{|x|}^b \rho^{1-n}\left(\int_0^{\omega\rho^n}\mathcal H_0(s){\rm d}s\right){\rm d}\rho\equiv w_b(x).$$
Thus, we infer that 
	\begin{align*}
	\int_{\Omega}u^2 {\rm d}v_g&= \int_{\Omega_+}u_+^2 {\rm d}v_g+\int_{\Omega_-}u_-^2 {\rm d}v_g=\int_{B_0(a)}(u_+^\star)^2 {\rm d}x+\int_{B_0(b)}(u_-^\star)^2 {\rm d}x\\&\leq{\sf AVR}_{(M,g)}^{-\frac{4}{n}}\left(
	\int_{B_0(a)}w_a^2 {\rm d}x+\int_{B_0(b)}w_b^2 {\rm d}x\right),
\end{align*}
which concludes the proof of (ii). 
 \hfill $\square$

\noindent {\it Proof of Theorem \ref{Huisken-CP}.} In view of Proposition \ref{talenti-result-0}, one has that 
\begin{eqnarray}\label{Avg-ut}
\nonumber	\Lambda_g(\Omega)&=&\min_{u\in W_0^{2,2}(\Omega)\setminus \{0\}}\frac{\displaystyle \int_{\Omega}(\Delta_g u)^2 {\rm d}v_g}{\displaystyle \int_{\Omega}u^2 {\rm d}v_g}\\ &\geq& {\sf AVR}_{(M,g)}^{\frac{4}{n}}\min_{(u_a,u_b)\neq (0,0)}\frac{\displaystyle \int_{B_0(a)} (\Delta u_a)^2{\rm d}x+ \int_{B_0(b)} (\Delta u_b)^2{\rm d}x}{\displaystyle\int_{B_0(a)}u_a^2 {\rm d}x+\int_{B_0(b)}u_b^2 {\rm d}x},
\end{eqnarray}
where $a^n+b^n=L^n$ (with $V_g(\Omega)=\omega_nL^n$)  and $(u_a,u_b)$ is taken over of all pairs of radially symmetric functions with   $u_a\in W^{1,2}_0(B_0(a))\cap W^{2,2}(B_0(a))$ and $u_b\in W^{1,2}_0(B_0(b))\cap W^{2,2}(B_0(b))$, verifying the boundary condition  
$$u_a'(a)a^{n-1}=u_b'(b)b^{n-1};$$ indeed, the latter condition is implied by \eqref{basic=J-H-0} and the Dirichlet problems \eqref{CP-problem-v-w}. 

As we can observe, the optimization problem in \eqref{Avg-ut} is precisely the one that appears in Ashbaugh and Benguria \cite[p.\ 6]{A-B}. Therefore, the following cases are distinguished.  
\begin{itemize}
	\item The case $n\in \{2,3\}$:  by \cite[Theorem 1]{A-B}, we obtain that
	\begin{equation}\label{A_B_isoper-1}
		\Lambda_g(\Omega)\geq {\sf AVR}_{(M,g)}^{\frac{4}{n}}\Lambda_0(B_0(L)),
	\end{equation}
	which occurs when  $a=L$ and $b=0$ (or vice-versa) in \eqref{Avg-ut}. 
		\item The case $n\geq 4$: by Ashbaugh and Laugesen \cite[Theorem 4]{A-L}, it follows that
		$$\Lambda_g(\Omega)\geq {\sf AVR}_{(M,g)}^{\frac{4}{n}}w_n\Lambda_0(B_0(L)),$$
		where 
		$$w_n=2^\frac{4}{n} \frac{ {\mathfrak j}_{\frac{n}{2}-1}^4}{\mathfrak  h_{\frac{n}{2}-1}^4}<1,$$ which appears when $a=b=2^{-\frac{1}{n}}L$ in \eqref{Avg-ut}.  By \cite{A-L}, we also have that  $
		\displaystyle\lim_{n\to \infty} w_{n}=1.$
\end{itemize}

Let $\Omega\subset M$ be an open bounded set such that equality holds in \eqref{A_B_isoper-1}. In particular, the Ashbaugh--Benguria--Nadirashvili--Talenti  nodal-decom\-po\-sition argument implies that  the minimizer $u$ has a constant sign (say, $u>0 $ in $\Omega$, since, e.g.\ $b=0$ and $a=L$); moreover, one necessarily has equality also in \eqref{isop-Brendle-kovetk} for a.e.\ $t\in [0,T_u^+]$. Therefore,  $(M,g)$ is isometric to $(\mathbb R^n,g_0)$  and $\Lambda_t\subset \Omega$ is isometric to the ball $\{x\in\mathbb R^n:u_+^\star(x)>t \}$ for a.e.\ $t\in [0,T_u^+]$, cf.\ \eqref{eq-isoperimetric-1}. In particular, $\Omega$ is also isometric to $B_0(L)$, which concludes the proof. The converse statement holds trivially.
  \hfill $\square$



	
\appendix	\section{Special functions} \label{section-8}
This section briefly lists some basic properties of those special functions that were used throughout the paper; these properties can also be found in 
Olver, Lozier,  Boisvert and Clark \cite{Digital}. 

Let  $\mu>-1$ be fixed. The Bessel and modified Bessel  functions of the first kind are defined as 
\begin{equation}\label{Bessel-J}
J_\mu(x)=\sum_{m=0}^\infty\frac{(-1)^m}{m!\Gamma(m+\mu+1)}\left(\frac{x}{2}\right)^{2m+\mu},\ x\in \mathbb R,
\end{equation}
and
\begin{equation}\label{Bessel-I}
	I_\mu(x)=\mathfrak{i}^{-\mu}J_\mu(\mathfrak{i}x)=\sum_{m=0}^\infty\frac{1}{m!\Gamma(m+\mu+1)}\left(\frac{x}{2}\right)^{2m+\mu},\ x\in \mathbb R,
\end{equation}
respectively, see \cite[\S 10]{Digital}.
The following recurrence relations hold for the Bessel functions and their derivatives; namely, 
\begin{equation}\label{Bessel-recurrence}
	J'_{\mu}(x)=-J_{\mu+1}(x)+\frac{\mu}{x}J_{\mu}(x) \ \ {\rm and} \ 	I'_{\mu}(x)=I_{\mu+1}(x)+\frac{\mu}{x}I_{\mu}(x),\ x>0.
\end{equation} 
 For  $\mu\notin \mathbb Z$, the Bessel and modified Bessel  functions of the second kind are defined as  
$$Y_\mu(x)=\frac{J_\mu(x)\cos(\mu \pi)-J_{-\mu}(x)}{\sin(\mu \pi)}\ \  {\rm and}\ \ K_\mu(x)=\frac{\pi}{2}\frac{I_{-\mu}(x)-I_\mu(x)}{\sin(\mu \pi)},$$
respectively, see  \cite[rels.\ (10.2.3) and (10.27.4)]{Digital}, 
while in the case of any integer order $n$, we have that $$Y_n(x)=\lim_{\mu\to n}Y_\mu(x)\ \  {\rm and}\ \ K_n(x)=\lim_{\mu\to n}K_\mu(x).$$
We also have that 
\begin{equation}\label{Y-I-K}
Y_\mu(\mathfrak iz)=e^\frac{(\mu+1)\mathfrak{i}\pi}{2}I_\mu(z)-\frac{2}{\pi}e^{-\frac{\mu \mathfrak{i}\pi}{2}}K_\mu(z),\ \ z\in \mathbb C.
\end{equation}
An alternative, more explicit representation for $Y_n$ $(n\in \mathbb N)$ is 
\begin{equation}\label{Y_n-forma}
\begin{array}{@{~}r@{~}c@{~}l}
	Y_n(z) & = & -\dfrac{(\frac{z}{2})^{-n}}{\pi}\displaystyle\sum_{m=0}^{n-1}\dfrac{(n-m-1)!}{m!}\left(\dfrac{z}{2}\right)^{2m}+\dfrac{2}{\pi}J_n(z)\ln\dfrac{z}{2}\\
	&  & -\dfrac{(\frac{z}{2})^{n}}{\pi}\displaystyle\sum_{m=0}^{\infty}\dfrac{(-1)^m}{m!(n+m)!}(\Psi(m+1)+\Psi(n+m+1))\left(\dfrac{z}{2}\right)^{2m}, \ z\in \mathbb C,
\end{array}
\end{equation}
%
%
%
%
%
where $\Psi=(\ln {\Gamma})'$
is the Digamma function.  The function $\Psi$ has the pointwise properties 
\begin{equation}\label{digamma-prop-11}
	\Psi(z+1)-\Psi(z)=\frac{1}{z},\ z\neq  0,-1,-2\ldots,
\end{equation}
\begin{equation}\label{digamma-prop-12}
	 \Psi(1-z)-\Psi(z)=\pi \cot(\pi z),\  z\neq  0,\pm1,\pm 2,\ldots,
\end{equation}
\begin{equation}\label{digamma-prop-2}
	\Im  \Psi\left(\frac{1}{2}+\mathfrak{i}y\right)=\frac{\pi}{2}\tanh(y\pi),\   y\in \mathbb R,
\end{equation}
see \cite[rels.\ (5.5.2), (5.5.4)  and (5.4.17)]{Digital}, where $\Im z$ denotes the imaginary part of $z\in \mathbb C,$ and the asymptotic property 
\begin{equation}\label{Digamma-converges}
	\Psi(z)\sim\ln z-\frac{1}{2z}-\sum_{m=1}^\infty \frac{B_{2m}}{2mz^{2m}}\ {\rm as}\ z\to \infty,| {\rm ph}z|<\pi,
\end{equation}
where $\{B_{2m}\}_{m\in \mathbb N_{\geq 1}}$ are the Bernoulli numbers, see \cite[rel.\ (5.11.2)]{Digital}. 

If $t\in (-1,1)$, $\mu\in (-\infty,0]$ and $\nu\in \mathbb C$ with 
$\nu(1+\nu)\in \mathbb R,$
the Legendre (called also Ferrers) and Gaussian hypergeometric  functions of the first kind are connected by the relation 
\begin{equation}\label{P-hypergoemetric}
\LP{}_\nu^\mu(t)=\frac{1}{\Gamma(1-\mu)}\left(\frac{1+t}{1-t}\right)^\frac{\mu}{2}{_2F}_1\left(1+\nu,-\nu;1-\mu;\frac{1-t}{2}\right)\in \mathbb R,
\end{equation}
see \cite[rel.\ (14.3.1)]{Digital}. 
The inversion formula 
for $\mu=m\in \mathbb N$ gives that 
\begin{equation}\label{inversion}
	\LP_\nu^{-m}(t)=(-1)^m\frac{\Gamma(\nu-m+1)}{\Gamma(\nu+m+1)}\LP_\nu^{m}(t),\ t\in (-1,1),
\end{equation}
see \cite[rel.\ (14.3.5)]{Digital}
Using \cite[rel.\ (15.10.11)]{Digital}, we recall the  Euler--Pfaff transformations  
\begin{equation}\label{Olver-1}
{_2F}_1(a,b;c;z)=(1-z)^{-a}{_2F}_1\left(a,c-b;c;\frac{z}{z-1}\right)=(1-z)^{c-a-b}{_2F}_1\left(c-a,c-b;c;z\right),\ \  |z|<1.
\end{equation}
The differentiation formula gives that
\begin{equation}\label{F-differential}
	\frac{\rm d}{{\rm d}z}\, {_2F}_1(a,b;c;z)=\frac{ab}{c}\,{_2F}_1(a+1,b+1;c+1;z),
\end{equation}
or 
\begin{equation}\label{F-differential-2}
	c(1-z)\frac{\rm d}{{\rm d}z}\,{_2F}_1(a,b;c;z)=(c-a)(c-b)\,{_2F}_1(a,b;c+1;z)+ c(a+b-c)\,{_2F}_1(a,b;c;z).
\end{equation}
Moreover, the derivation formula for the Legendre function reads as
\begin{equation}\label{derivation-legendre}
	(1-t^2)\frac{{\rm d}}{{\rm d}t}\LP_\nu^\mu(t)=(\mu+\nu)\LP_{\nu-1}^\mu(t)-\nu t\LP_\nu^\mu(t),\ \ t\in (-1,1),
\end{equation}
while the derivatives with respect to the degree of the Legendre function is 
\begin{equation}\label{derivative-Legendre}
\frac{\partial}{\partial \nu}\LP_\nu^\mu(t)=\pi \cot(\nu\pi)\LP_\nu^\mu(t)-\frac{1}{\pi}\LA_\nu^\mu(t),\ \ t\in (-1,1),
\end{equation}
see \cite[rels.\ (14.10.5) and (14.11.1)]{Digital}, where 
\begin{equation}\label{A-expression}
\LA_\nu^\mu(t)=\sin(\nu\pi)\left(\frac{1+t}{1-t}\right)^\frac{\mu}{2}\sum_{m=0}^\infty \frac{\Gamma(m-\nu)\Gamma(m+\nu+1)\left(\Psi(m+\nu+1)-\Psi(m-\nu)\right)}{m!\Gamma(m-\mu+1)}\left(\frac{1-t}{2}\right)^m.
\end{equation}
The symmetrization formula has the form
\begin{equation}\label{inversion-P}
	\LP{}_\nu^0(-t)=\cos(\nu \pi)\LP{}_\nu^0(t)-\frac{2}{\pi}\sin(\nu \pi)\LQ_\nu^0(t),\ \ t\in (-1,1),
\end{equation}
see \cite[rel.\ (14.9.10)]{Digital},  where $\LQ_\nu^0$ is the associated Legendre function; moreover, 
\begin{equation}\label{singular-Q}
\LP_\nu^0(1)=1 \ \ {\rm and} \ \	\LQ_\nu^0(t)=-\frac{1}{2}\ln{(1-t)}+\frac{\ln 2}{2}-\gamma-\Psi(\nu+1)+\mathcal O((1-t)\ln(1-t))\ \ {\rm as}\ \ t\nearrow 1,
\end{equation}
see \cite[rel.\ (14.8.3)]{Digital}, where $\nu\neq -1,-2,\ldots$ and $\gamma\approx 0.5772$ is the Euler constant. We also have  that 
\begin{equation}\label{Gamma-special}
\cosh(z\pi)\Gamma\left(\frac{1}{2}+\mathfrak{i}z\right)\Gamma\left(\frac{1}{2}-\mathfrak{i}z\right)=\pi,\ \ z\in \mathbb C,
\end{equation}
see \cite[rel.\ (5.4.4)]{Digital}.  

The behavior of the Gaussian hypergeometric functions at the singularity $1$ is described as 
\begin{equation}\label{singularity-1}
\lim_{z\nearrow 1} 	{_2F}_1(a,b;c;z)=\frac{\Gamma(c-a-b)\Gamma(c)}{\Gamma(c-a)\Gamma(c-b)},\ \  {\rm whenever}\ \ \Re(c-a-b)>0;
\end{equation} 
 \begin{equation}\label{singularity-2}
 	\lim_{z\nearrow 1} 	\frac{{_2F}_1(a,b;c;z)}{-\ln(1-z)}=\frac{\Gamma(c)}{\Gamma(a)\Gamma(b)},\ \  {\rm whenever}\ \  c=a+b;
 \end{equation} 
 \begin{equation}\label{singularity-3}
	\lim_{z\nearrow 1} 	\frac{{_2F}_1(a,b;c;z)}{(1-z)^{c-a-b}}=\frac{\Gamma(c)\Gamma (a+b-c)}{\Gamma(a)\Gamma(b)},\ \  {\rm whenever}\ \  \Re(c-a-b)<0,
\end{equation} 
 see \cite[\S 15.4]{Digital}.	Moreover, by \cite[rel.\ (15.4.30)]{Digital}  we have that
 \begin{equation}\label{fel-terfogathoz}
 	{_2F}_1\left(a,1-a;c;\frac{1}{2}\right)=\frac{2^{1-c}\sqrt{\pi}\Gamma(c)}{\Gamma(\frac{a+c}{2})\Gamma(\frac{c-a+1}{2})}.
 \end{equation}

\vspace{0.5cm}
\noindent {\bf Acknowledgment.} The author thanks the anonymous Referee
for all valuable comments and suggestions that significantly improved the presentation of the paper.\
Thanks are also due to the Referee of the paper \cite{Kristaly-Adv-math} who suggested to investigate the missing part (i.e., the positively curved case) of Lord Rayleigh's conjecture for clamped plates. 
The author is also grateful to \'Arp\'ad Baricz and Dmitrii Karp  for their help in the theory of special functions and to \'Agoston R\'oth for his advice, figures and help concerning the present form of the manuscript.

\end{document}